\theoremstyle{plain}
\newtheorem{thm}{Theorem}
\newtheorem{prop}{Proposition}
\newtheorem{cor}{Corollary}
\newtheorem{lem}{Lemma}
\newtheorem*{assumption}{Assumption}
\theoremstyle{definition}
\newtheorem{rem}{Remark}
\newcommand{\Avg}{\mathrm{Avg}}
\title{On the value-distribution of the logarithms of symmetric square $L$-functions 
in the level aspect}
\author{
Philippe Lebacque,
Kohji Matsumoto
and 
Yumiko Umegaki
}
\date{}
\begin{document}
\maketitle
%
%
\begin{abstract}
We consider the value distribution of logarithms of symmetric square
$L$-functions associated with newforms of even weight and prime power level
at $s=\sigma > 1/2$.
We prove that certain averages of those values can be written as integrals 
involving a density function which is related with the Sato-Tate measure.
Moreover, we discuss the case of symmetric power $L$-functions.
\end{abstract}
%
%
\section{Introduction and the statement of main results}
\par
The prototype of the theory of $M$-functions is the limit theorem of Bohr and Jessen
\cite{bj} for the Riemann zeta-function $\zeta(s)$, where $s=\sigma+i\tau\in\mathbb{C}$.
Let $R$ be a rectangle in the complex plane with the edges parallel to the axes.
Let $\sigma>1/2$, and for any $T>0$, let $L_{\sigma}(T,R)$ be the 1-dimensional
Lebesgue measure of the set $\{\tau\in[0,T]\mid \log\zeta(\sigma+i\tau)\in R\}$.
Then the Bohr-Jesen theorem asserts the existence of a continuous non-negative function
$\mathcal{M}_{\sigma}(w,\zeta)$ defined on $\mathbb{C}$, for which
\begin{align}\label{BJ-thm}
\lim_{T\to\infty}\frac{L_{\sigma}(T,R)}{T}=\int_R \mathcal{M}_{\sigma}(w,\zeta)
\frac{dudv}{2\pi}
\end{align}
holds with $w=u+iv\in\mathbb{C}$.   Here, the left-hand side is the average of
$\log\zeta(s)$ in the $\tau$-aspect.

Analogous results for Dirichlet $L$-functions $L(s,\chi)$ (and more generally,
$L$-functions over number fields and function fields)
were discussed much later by Ihara
\cite{ihara} and a series of joint papers by Ihara and the second-named author
\cite{im_2010} \cite{im_2011} \cite{im-moscow} \cite{im_2014}.
Their results assert the existence of a continuous non-negative function
$\mathcal{M}_{\sigma}(w,L)$ for which formulas of the form
\begin{align}\label{IM-thm}
{\rm Avg}_{\chi}\Phi(\log L(s,\chi))=\int_{\mathbb{C}}\mathcal{M}_{\sigma}(w,L)\Phi(w)
\frac{dudv}{2\pi}
\end{align}
are valid, where $\Phi$ is a certain test function and ${\rm Avg}_{\chi}$ means a certain
average with respect to $\chi$.
The functions $M_{\sigma}$ appearing on the right-hand sides of \eqref{BJ-thm} and
\eqref{IM-thm} are now called the $M$-functions associated with $\log\zeta(s)$ and
with $\log L(s,\chi)$, respectively.

After those studies, several mathematicians tried to obtain the same type of formulas
for other zeta and $L$-functions.   In particular, $M$-functions associated with
automorphic $L$-functions have been studied in recent years.

This direction of research was first cultivated by two papers published in 2018.
The level and the modulus aspects of automorphic $L$-functions were discussed by
\cite{lz}, while the difference of logarithms of two symmetric power $L$-functions
were treated in \cite{mu}.

As for the $\tau$-aspect, formulas analogous to \eqref{BJ-thm} were obtained 
in \cite{mu-JNT} for automorphic $L$-functions, and in \cite{mu-ropar} for symmetric
power $L$-functions.   Mine's article \cite{mineLMJ} is also to be mentioned,
in which a more general framework is treated.

Recently, Mine \cite{mine} obtained a quite satisfactory analogue of \eqref{IM-thm} 
for automorphic $L$-functions in the level aspect.

The aim of the present paper is to obtain an analogue of \eqref{IM-thm} for symmetric
power $L$-functions, under certain analytical conditions, in the level aspect.
We will show the full analogue of \eqref{IM-thm} only in the case of symmetic square
$L$-functions, but we will also deduce a corollary  in a very special case for general 
symmetric power $L$-functions.

In order to state the results, now we prepare the notations.
Let $f$ be a primitive form of weight $k$ and level $N$, 
which means that it is a normalized common Hecke-eigen newform
of weight $k$ for $\Gamma_0(N)$. 
We denote by $S_k(N)$ the set of all cusp forms of weight $k$ and level $N$. 
We have a Fourier expansion of $f$ at infinity of the form
\[
f(z)=\sum_{n=1}^{\infty}\lambda_f(n)n^{(k-1)/2}e^{2\pi inz},
\]
where $\lambda_f(1)=1$ and 
the Fourier coefficients $\lambda_f(n)$ are real numbers. 
We consider the $L$-function 
\[
L(f, s)
=
\sum_{n=1}^{\infty}\frac{\lambda_f(n)}{n^s} 
\]
associated with $f$ where $s=\sigma+i\tau\in\mathbb{C}$.
This is absolutely convergent when $\sigma >1$,
and can be continued to the whole of
$\mathbb{C}$ as an entire function.
\par
We denote by $\mathbb{P}$ the set of all prime numbers.
For $N$,
let $\mathbb{P}_N$ be the set of primes which do not divide $N$.
We know that $L(f, s)$ $(\sigma>1)$ has the Euler product expansion
\begin{align*}
L(f, s)
=&
\prod_{p \mid N}(1-\lambda_f(p) p^{-s})^{-1}
\prod_{p \in \mathbb{P}_N}(1-\lambda_f(p) p^{-s} + p^{-2s})^{-1}
\\
=&
\prod_{p \mid N}(1-\lambda_f(p) p^{-s})^{-1}
\prod_{p \in \mathbb{P}_N}(1-\alpha_f(p) p^{-s})^{-1}(1-\beta_f(p) p^{-s})^{-1},
\end{align*}
where 
$\alpha_f(p)$ and $\beta_f(p)$ satisfy 
$\alpha_f(p)+\beta_f(p)=\lambda_f(p)$ and
$|\alpha_f(p)|=|\beta_f(p)|=1$,
and they are the complex conjugate of each other.
This Euler product is deduced from the relations
\begin{equation}\label{euler}
\lambda_f(p^{\ell})=
\begin{cases}
\displaystyle \lambda_f^{\ell}(p) & p \mid N,\\
\displaystyle \sum_{h=0}^{\ell}\alpha_f^{\ell-h}(p)\beta_f^h(p) & p \nmid N.
\end{cases}
\end{equation}
From this relations, we see that $|\lambda_f(n)|\leq d(n)$, where $d(n)$ is
the number of divisors of $n$.
Further, for $p|N$, we have $\lambda_f^2(p)=p^{-1}$ if the $p$-component of $N$ is
just $p$, and $\lambda_f(p)=0$ if $p^2|N$ (see Miyake \cite{mi}).
\par
For any positive integer $r$ and $\sigma>1$,
we denote the (partial) $r$th symmetric power $L$-function by
\[
L_{\mathbb{P}_N}(\mathrm{Sym}_f^{r}, s)=\prod_{p\in \mathbb{P}_N}\prod_{h=0}^{r}(1-\alpha_f^{r-h}(p)\beta_f^h(p) p^{-s})^{-1}.
\]
When $r=1$, this is nothing but the partial $L$-function
$$
L_{\mathbb{P}_N}(f, s)=
\prod_{p \in \mathbb{P}_N}(1-\alpha_f(p) p^{-s})^{-1}(1-\beta_f(p) p^{-s})^{-1}.
$$
Let
\begin{align*}
\log L_{\mathbb{P}_N}(\mathrm{Sym}_f^{r}, s)
=&
-
\sum_{p \in \mathbb{P}_N}
\sum_{h=0}^{r}
\mathrm{Log}(1-\alpha_f^{r-h}(p)\beta_f^h(p)p^{-s}),
\end{align*}
where $\mathrm{Log}$ means the principal branch.
%
%
\par
We assume the following (see \cite{mu-ropar}).
\begin{assumption}[Analytical conditions]
For any primitive form $f\in S_k(N)$, $L_{\mathbb{P}_N}(\mathrm{Sym}_f^r, s)$
can be analytically continued to an entire function.
There are predicted local factors $L_p(\mathrm{Sym}_f^{r}, s)$ for $p\mid N$
and $L_{\mathbb{P}_N}(\mathrm{Sym}_f^{r}, s)$ satisfies the functional equation
\begin{align}\label{6-5}
\Lambda(\mathrm{Sym}_f^{r},s)=\varepsilon_{r,f}
\Lambda(\mathrm{Sym}_f^{r},1-s),
\end{align}
where $|\varepsilon_{r,f}|=1$ and
\begin{align*}
\Lambda(\mathrm{Sym}_f^{r},s)
=&q_{r,f}^{s/2}\widetilde{\Gamma}_{r}(s)L(\mathrm{Sym}_f^{r},s)
\\
L(\mathrm{Sym}_f^{r},s)
=&L_{\mathbb{P}_N}(\mathrm{Sym}_f^{r},s)\prod_{p\mid N} L_p(\mathrm{Sym}_f^{r},s)
\end{align*}
with the conductor $q_{r,f}$ and the ``gamma factor'' $\widetilde{\Gamma}_{r}(s)$.
Here, the gamma factor is written by
\begin{align}\label{gamma-factor}
\widetilde{\Gamma}_{r}(s)
=\pi^{-(r+1)s/2}\prod_{j=1}^{r+1}\Gamma\bigg(\frac{s+\kappa_{j,r}}{2}\bigg),
\end{align}
where $\kappa_{j,r}\in\mathbb{R}$, and each local factor for $p|N$ is written as
\begin{align}\label{local-factor}
L_p(\mathrm{Sym}_f^{r},s)=(1-\lambda_{p,r,f}p^{-s})^{-1},
\quad |\lambda_{p,r,f}|\leq p^{-r/2}
\end{align}
(see Cogdell and Michel \cite{CM04}, Moreno and Shahidi \cite{MS85}, Rouse \cite{rouse07},
and Rouse and Thorner \cite{RT17}).
Moreover, it satisfies a estimate
\begin{align}\label{assump_est}
|L(\mathrm{Sym}_f^{r},s)| \ll_r N(|t|+2)\qquad(1/2 < \sigma \leq 2),
\end{align}
where $\ll$ stands for the Vinogradov symbol, the same as Landau's $O$-symbol.
(The suffix here means that the constant implied by $\ll$ depends on $r$.
In what follows, similarly, the implied constants do not depend on parameters 
which are not
written explicitly, unless otherwise indicated.) 
\end{assumption}
\begin{rem}\label{assump-1-2}
It is known that the statements in the above assumption are true for $r=1$
(classical), or for $r=2$ and $N$ is square-free (Shimura \cite{sh},
Gelbart and Jacquet \cite{gj}).
\end{rem}
In the present paper, we further assume the generalized Riemann hypothesis.
\begin{assumption}[GRH]
  Let $f$ be a primitive form of weight $k$ with $2\leq k < 12$ or $k=14$
and level $q^m$, where $q$ is a prime number.
The $L$-functions $L(\mathrm{Sym}_f^r, s)$
satisfies the Generalized Riemann Hypothesis
which means that $L(\mathrm{Sym}_f^rf, s)$ has no zero
in the strip $1/2 < \sigma \leq 1$.
\end{assumption}
%
%
\par
We consider the primitive forms $f\in S_k(q^m)$,
where $q$ is a prime number and $m$ is a positive integer.
Our aim is to study certain averages of the value of
\[
\log L_{\mathbb{P}_q}(\mathrm{Sym}_f^{r}, \sigma)
=
\log L_{\mathbb{P}_{q^m}}(\mathrm{Sym}_f^r, \sigma)
\]
for $\sigma >1/2$.    Because of Assumption (GRH), there is no problem how to choose
the branch of the logarithm.
Let $\Psi$ be a $\mathbb{C}$-valued function defined on $\mathbb{R}$.
We consider the average
\begin{align}\label{average_def}
&
\Avg \Psi(\log L_{\mathbb{P}_q}(\mathrm{Sym}_f^{r},\sigma))
\nonumber\\
=&
\lim_{q^m\to\infty}
\frac{1}{C_k(1-C_q(m))}\sum_{\substack{f\in S_k(q^m) \\ f\text{:\;primitive\;form}}}\frac{1}{\langle f,f \rangle}
\Psi(\log L_{\mathbb{P}_q}(\mathrm{Sym}_f^{r},\sigma)),
\end{align}
where $\langle,\rangle$ denotes the Petersson inner product, 
\[
C_k=\frac{(4\pi)^{k-1}}{\Gamma(k-1)},\quad
C_q(m)
=
\begin{cases}
  0 & m=1,\\
  q(q^2-1)^{-1} & m=2,\\
  q^{-1} & m\geq 3,
\end{cases}
\]
and the symbol $q^m\to\infty$ means that
(i) $m$ tends to $\infty$ while primes $q$ are bounded, or (ii)
$q$ tends to $\infty$ while $m$ are bounded.
We abbreviate the right-hand side of \eqref{average_def} as
\[
\lim_{q^m\to\infty}
\sum_{f\in S_k(q^m)}^{\qquad\prime}
\Psi(\log L_{\mathbb{P}_q}(\mathrm{Sym}_f^{r},\sigma)).
\]
%
%
%
\par
Let
\[
\rho
=
\begin{cases}
  [r/2] & r\;\text{is odd}\\
  [r/2]-1 & r\;\text{is even}
\end{cases}
=
\begin{cases}
  \displaystyle \frac{r-1}{2} & r\;\text{is odd},\\
  \displaystyle \frac{r}{2}-1 & r\;\text{is even}.
\end{cases}
\]
Our ultimate objective is to construct the ``$M$-function''
$\mathcal{M}_{\sigma}(\mathrm{Sym}^{\rho}, u)$ which satisfies
\begin{align}\label{ultimate}
\Avg \Psi (\log L_{\mathbb{P}_q}(\mathrm{Sym}_f^{r},\sigma))
=
\int_{\mathbb{R}}\mathcal{M}_{\sigma}(\mathrm{Sym}^{\rho}, u)\Psi (u)
\frac{du}{\sqrt{2\pi}}
\end{align}
for any function $\Psi:\mathbb{R}\to\mathbb{C}$
of exponential growth under the GRH. 
This type of result was established in \cite[Theorem 4]{im-moscow}
for Dirichlet $L$-functions of global fields.
We may expect that the
same type of result would hold for symmetric power $L$-functions.
%
%
\par
In the present paper we will establish the following
theorem, which treats the cases $r=1$ or $2$ which means $\rho=0$.
\begin{thm}\label{main1}
Let
$k$ an even integer which satisfies $2\leq k <12$ or $k=14$.
For $r=1, 2$, we
suppose Assumption (Anatytical conditions) and Assumption (GRH).
Then, for $\sigma>1/2$, there exists a function 
$\mathcal{M}_{\sigma}(\mathrm{Sym}^0, u):\mathbb{R}\to\mathbb{R}_{\geq 0}$ 
which can be explicitly constructed, and for which the formula
\begin{align}\label{main-formula}
\Avg \Psi (\log L_{\mathbb{P}_q}(\mathrm{Sym}_f^r,\sigma))
=
\int_{\mathbb{R}}\mathcal{M}_{\sigma}(\mathrm{Sym}^0, u)\Psi (u)
\frac{du}{\sqrt{2\pi}}
\qquad (r=1,2) 
\end{align}
holds for the function $\Psi:\mathbb{R} \to \mathbb{C}$
which is a bounded continuous function
or a compactly supported Riemann integrable function.
\end{thm}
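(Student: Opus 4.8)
The plan is to follow the standard three-step strategy for $M$-function theorems in the level aspect. Throughout put $\alpha_f(p)=e^{i\theta_f(p)}$, $\beta_f(p)=e^{-i\theta_f(p)}$ and, for $p\in\mathbb{P}_q$,
\[
g_{p}(\theta):=-\sum_{h=0}^{r}\mathrm{Log}\bigl(1-e^{i(r-2h)\theta}p^{-\sigma}\bigr)=\sum_{n\ge1}\frac{p^{-n\sigma}}{n}\,U_r(\cos n\theta),
\]
where $U_r$ is the Chebyshev polynomial of the second kind; thus $g_p$ is real-valued, $|g_p(\theta)|\ll_r p^{-\sigma}$, and $\log L_{\mathbb{P}_q}(\mathrm{Sym}_f^r,\sigma)=\sum_{p\in\mathbb{P}_q}g_p(\theta_f(p))\in\mathbb{R}$. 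Let $\mu_{\mathrm{ST}}$ denote the Sato--Tate measure $\tfrac2\pi\sin^2\theta\,d\theta$ on $[0,\pi]$, let $\mathcal{M}_{\sigma,p}$ be the push-forward of $\mu_{\mathrm{ST}}$ by $\theta\mapsto g_p(\theta)$, and let $\mathbf{M}:=\ast_{p}\mathcal{M}_{\sigma,p}$ be the infinite convolution; the function we shall produce, $\mathcal{M}_\sigma(\mathrm{Sym}^0,\cdot)$, is $\sqrt{2\pi}$ times the Lebesgue density of $\mathbf{M}$ (it does depend on $r$, through the $g_p$).

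\emph{Step 1: approximation by a short Euler product.} Fix $y$ and write $\log L_{\mathbb{P}_q}(\mathrm{Sym}_f^r,\sigma)=\sum_{p\le y}g_p(\theta_f(p))+R_y(f)$. The crux of the proof is the uniform tail bound
\[
\limsup_{q^m\to\infty}\ \sum_{f\in S_k(q^m)}^{\qquad\prime}|R_y(f)|^2\ \ll\ \eta(y),\qquad \eta(y)\to0\ \ (y\to\infty),
\]
which one proves by expanding each $g_p$ into its prime-power Dirichlet series, squaring out, and applying Petersson's trace formula; the large-prime and off-diagonal contributions are handled using the bound \eqref{assump_est} and the hypothesis (GRH), via the GRH-conditional approximation of $\log L(\mathrm{Sym}_f^r,\cdot)$ by a short Dirichlet polynomial in the strip $1/2<\sigma\le1$ (one first passes from $\log L_{\mathbb{P}_q}$ to $\log L(\mathrm{Sym}_f^r,\cdot)$, the discrepancy being one uniformly bounded local factor at $q$). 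I expect this to be the genuine obstacle: the bound is needed uniformly over \emph{both} regimes described after \eqref{average_def} (that is, $m\to\infty$ with $q$ bounded, and $q\to\infty$ with $m$ bounded), and when $m\ge2$ the trace formula must be applied after removing oldforms, which is exactly where the correction $1-C_q(m)$ of \eqref{average_def} comes in and where the hypothesis $2\le k<12$ or $k=14$ is used, as it forces $S_k(1)=0$, so that old classes in $S_k(q^m)$ come only from the strictly smaller levels $q^j$, $1\le j<m$. The same computation also gives the uniform bound $\sup_{q^m}\sum_f^{\,\prime}|\log L_{\mathbb{P}_q}(\mathrm{Sym}_f^r,\sigma)|^2<\infty$, which we keep for Step 3.

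\emph{Step 2: the limiting joint law of the Satake angles.} The coefficient $\lambda_f$ is multiplicative, and by \eqref{euler} $\lambda_f(p^\nu)=X_\nu(\lambda_f(p))$ for $p\nmid q$, where $X_\nu$ is defined by $X_\nu(2\cos\theta)=\sin((\nu+1)\theta)/\sin\theta$. Petersson's formula, together with the oldform sieving (the source of the normalizing factor $1-C_q(m)$), yields $\sum_f^{\,\prime}\lambda_f(n)\to\delta_{n=1}$ as $q^m\to\infty$ for $(n,q)=1$, the Kloosterman contribution being negligible because its moduli are divisible by $q^m$. As $\{X_\nu\}_{\nu\ge0}$ is precisely the orthonormal basis of $\mu_{\mathrm{ST}}$, this means that the joint law of $(\theta_f(p))_{p\le y}$ under the weighted counting measure of \eqref{average_def} converges weakly to $\bigotimes_{p\le y}\mu_{\mathrm{ST}}$ (vertical Sato--Tate, with independence across primes). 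Combining this with Step 1, for the test function $\Psi(x)=e^{iux}$ one obtains $\Avg\,\Psi(\log L_{\mathbb{P}_q}(\mathrm{Sym}_f^r,\sigma))=\lim_{y\to\infty}\prod_{p\le y}\widehat{\mathcal{M}_{\sigma,p}}(u)$, the passage $y\to\infty$ being licensed by the tail bound.

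\emph{Step 3: the $M$-function and the general test class.} The product $\prod_p\widehat{\mathcal{M}_{\sigma,p}}(u)$ converges locally uniformly and is the characteristic function of $\mathbf{M}$: the $\mu_{\mathrm{ST}}$-mean of $g_p$ equals $-\tfrac12 p^{-2\sigma}$ for $r=1$ and $\tfrac12 p^{-2\sigma}+O(p^{-3\sigma})$ for $r=2$, and its $\mu_{\mathrm{ST}}$-variance is $O(p^{-2\sigma})$, so both the series of means and the series of variances converge for $\sigma>1/2$, whence $\mathbf{M}=\ast_p\mathcal{M}_{\sigma,p}$ by Kolmogorov's three-series criterion. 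Moreover $\mathbf{M}$ is absolutely continuous, because already $\mathcal{M}_{\sigma,2}$ is: $g_2$ is real-analytic and non-constant on $(0,\pi)$, hence a local diffeomorphism off a finite set, so the push-forward of the absolutely continuous $\mu_{\mathrm{ST}}$ by $g_2$ has a density, and convolution preserves absolute continuity. With $\mathcal{M}_\sigma(\mathrm{Sym}^0,\cdot)\colon\mathbb{R}\to\mathbb{R}_{\ge0}$ as above, the right-hand side of \eqref{main-formula} for $\Psi(x)=e^{iux}$ equals $\int_{\mathbb{R}}e^{iuv}\,d\mathbf{M}(v)=\prod_p\widehat{\mathcal{M}_{\sigma,p}}(u)$, which agrees with the left-hand side by Step 2. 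Finally, the uniform second-moment bound of Step 1 makes the laws of $\log L_{\mathbb{P}_q}(\mathrm{Sym}_f^r,\sigma)$ tight, so pointwise convergence of characteristic functions upgrades to weak convergence of these laws to $\mathbf{M}$; hence \eqref{main-formula} holds for every bounded continuous $\Psi$, and for every compactly supported Riemann integrable $\Psi$ it follows from the portmanteau theorem, since $\mathbf{M}$ --- being absolutely continuous --- assigns zero mass to the (Lebesgue-null) set of discontinuities of $\Psi$. Everything past Step 1 is soft; the analytic substance lies in the uniform tail estimate under (GRH) and in the level-aspect trace-formula bookkeeping, the latter following the pattern established for automorphic $L$-functions by Mine.
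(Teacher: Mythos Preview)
Your outline is sound and hits the same three pillars the paper uses --- a GRH-based truncation estimate, a vertical Sato--Tate input from Petersson's formula, and a construction of the limiting density --- but you organize them differently, and two structural differences are worth noting.

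First, you work with a \emph{fixed} truncation $y$ and a double limit ($q^m\to\infty$, then $y\to\infty$), whereas the paper takes a \emph{moving} truncation $\mathcal{P}_q(\log q^m)$ and a single limit. Your scheme is conceptually cleaner for Step~2 (for a fixed finite set of primes only qualitative weak convergence is needed, and your moment argument via the Chebyshev basis $\{U_\nu\}$ is tidier than the paper's explicit Taylor-polynomial computation in Lemma~3), but it pushes all the quantitative work into Step~1: once you invoke the GRH short-polynomial approximation, that polynomial has length growing with $q^m$, and bounding its contribution via Petersson uniformly in $q^m$ is exactly the computation the paper carries out for its terms $\mathcal{X}_{\log q^m}$ and $\mathcal{Y}_{\log q^m}$ in Section~5. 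So your ``crux'', when unpacked, is the paper's Lemma~5 together with the Section~5 estimates; your sketch names the right ingredients but not the mechanism.

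Second, your Step~3 is probabilistic (Kolmogorov three-series for existence of $\mathbf{M}$, absolute continuity via a single push-forward factor, L\'evy/portmanteau for the test-function class), whereas the paper is Fourier-analytic throughout: it proves Jessen--Wintner-type decay $\widetilde{\mathcal{M}}_{\sigma,p}(x)\ll p^\sigma(1+|x|)^{-1/2}$ for each local factor, deduces $\widetilde{\mathcal{M}}_\sigma\in L^t$ for all $t$, and defines $\mathcal{M}_\sigma$ as the inverse Fourier transform. The paper thereby obtains a \emph{continuous} density with $\mathcal{M}_\sigma(u)\to 0$ as $|u|\to\infty$; you get only an $L^1$ density from absolute continuity, which suffices for the theorem as stated but not for Remark~2. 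One small slip: you invoke the factor at $p=2$ for absolute continuity, but when $q=2$ that prime is excluded from $\mathbb{P}_q$; any single $p\in\mathbb{P}_q$ works.
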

\begin{rem}\label{abs-conv-case}
The function $\mathcal{M}_{\sigma}(\mathrm{Sym}^0, u)$ tends to $0$ as $|u|\to\infty$
(Proposition \ref{M} (3) in Section \ref{sec3}).
In particular, if $\sigma>1$, then $\mathcal{M}_{\sigma}(\mathrm{Sym}^0, u)$ is compactly supported, and hence \eqref{main-formula} is valid for any continuous $\Psi$.
\end{rem}
We will prove Theorem \ref{main1} and Remark \ref{abs-conv-case} at the end of
Section \ref{sec4}.
The main point of this theorem is the case $r=2$, that is the case of
symmetric square $L$-functions.
The case $r=1$ is nothing but the usual $L$-function attached to $f$,
and in this case Mine \cite{mine} proved a stronger unconditional result.
Nevertheless we include the case $r=1$ in the theorem,
because it can be treated in parallel with the case $r=2$,
and also, it is necessary to prove the next corollary. 
Moreover, it is to be stressed that the same ``$M$-function'' 
$\mathcal{M}_{\sigma}(\mathrm{Sym}^0, u)$ appears in \eqref{main-formula} for 
both the cases $r=1$ and $r=2$.
%
%
\begin{cor}\label{main2}
Let 
$k$ an even integer which satisfies $2\leq k <12$ or $k=14$.
Let $r$ be any positive integer.
Suppose Assumption (Anatytical conditions) and Assumption (GRH).
Let $\Psi_1(x)=cx$ with a constant $c$.
If \cite[Theorem~1.5]{mu} and
the above Theorem \ref{main1} are valid for $\Psi=\Psi_1$ (see Remark \ref{Psi_1-valid}
below), 
then, for $\sigma>1/2$, there exists a function 
$\mathcal{M}_{\sigma}^*:\mathbb{R}\to\mathbb{R}_{\geq 0}$ 
which can be explicitly constructed, and for which the formula
\[
\Avg \Psi (\log L_{\mathbb{P}_q}(\mathrm{Sym}_f^r,\sigma))
=
\int_{\mathbb{R}}\mathcal{M}_{\sigma}(\mathrm{Sym}^0, u)\Psi_1 (u)
\frac{du}{\sqrt{2\pi}}
+
\rho\int_{\mathbb{R}}\mathcal{M}_{\sigma}^{\ast}(u)\Psi_1 (u)
\frac{du}{\sqrt{2\pi}}
\]
holds.
\end{cor}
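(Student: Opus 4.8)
\emph{Proof idea.}
The plan is to reduce a general $r$ to the case $\rho=0$ already covered by Theorem~\ref{main1}, by stripping symmetric powers off two at a time and using that $\Psi_1$ is \emph{linear}. First I would fix $r_0\in\{1,2\}$ with $r_0\equiv r\pmod 2$, so that $r=r_0+2\rho$, and for an integer $a\ge 3$ set $D_a(f,s):=\log L_{\mathbb{P}_q}(\mathrm{Sym}_f^a,s)-\log L_{\mathbb{P}_q}(\mathrm{Sym}_f^{a-2},s)$. Since $\alpha_f(p)\beta_f(p)=1$ for $p\in\mathbb{P}_q$, the Satake parameters of $\mathrm{Sym}_f^a$ at $p$ are those of $\mathrm{Sym}_f^{a-2}$ together with $\alpha_f(p)^a$ and $\beta_f(p)^a$; writing $\alpha_f(p)=e^{i\theta_{f,p}}$ this gives, for $\sigma>1$ and hence (by analytic continuation, Assumption~(GRH) excluding zeros) for $\sigma>1/2$,
\[
D_a(f,s)=-\sum_{p\in\mathbb{P}_q}\mathrm{Log}\bigl(1-2p^{-s}\cos(a\theta_{f,p})+p^{-2s}\bigr),
\]
and telescoping the identity $\log L_{\mathbb{P}_q}(\mathrm{Sym}_f^a,\sigma)=\log L_{\mathbb{P}_q}(\mathrm{Sym}_f^{a-2},\sigma)+D_a(f,\sigma)$ over $a=r_0+2,r_0+4,\dots,r$ yields
\[
\log L_{\mathbb{P}_q}(\mathrm{Sym}_f^r,\sigma)=\log L_{\mathbb{P}_q}(\mathrm{Sym}_f^{r_0},\sigma)+\sum_{i=1}^{\rho}D_{r_0+2i}(f,\sigma).
\]

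Next, since $\Psi_1(x)=cx$ is additive, I would apply $\Psi_1$ to this identity and insert it into the weighted sum in \eqref{average_def}; that sum then splits into the term carrying $\log L_{\mathbb{P}_q}(\mathrm{Sym}_f^{r_0},\sigma)$ plus $\rho$ terms, one for each $D_{r_0+2i}(f,\sigma)$, and one may take the limit $q^m\to\infty$ term by term provided the $\rho+1$ individual limits exist. This is exactly what the standing hypothesis provides: as $r_0\in\{1,2\}$, Theorem~\ref{main1} (assumed valid for $\Psi_1$) gives the first limit as $\int_{\mathbb{R}}\mathcal{M}_\sigma(\mathrm{Sym}^0,u)\Psi_1(u)\,\frac{du}{\sqrt{2\pi}}$, and since each $D_{r_0+2i}(f,\sigma)$ is a difference of logarithms of two symmetric power $L$-functions, \cite[Theorem~1.5]{mu} (assumed valid for $\Psi_1$) gives the corresponding limit as $\int_{\mathbb{R}}\mathcal{M}_\sigma^{\ast,(i)}(u)\Psi_1(u)\,\frac{du}{\sqrt{2\pi}}$ for a nonnegative density $\mathcal{M}_\sigma^{\ast,(i)}$ constructed explicitly there.

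The step that I expect to carry the real content — the rest being formal — is to verify that $\mathcal{M}_\sigma^{\ast,(i)}$ is in fact the same for every $i$ (and for both parities of $r$), so that the $\rho$ terms collapse to $\rho\int_{\mathbb{R}}\mathcal{M}_\sigma^{\ast}(u)\Psi_1(u)\,\frac{du}{\sqrt{2\pi}}$. The reason is that $D_a(f,s)$ depends on $f$ only through the family $\bigl(\cos(a\theta_{f,p})\bigr)_{p\in\mathbb{P}_q}$, and, under the Sato--Tate equidistribution of the angles $\theta_{f,p}$ which governs the averages in Theorem~\ref{main1} and \cite[Theorem~1.5]{mu}, the law of $\cos(a\theta_{f,p})$ is independent of $a$ for all $a\ge 3$: it is the image of the Sato--Tate measure $\mu_{\mathrm{ST}}$ under $\theta\mapsto\cos(a\theta)$, and all of its moments equal those of the arcsine law, because $\int_0^\pi\cos(\ell a\theta)\,d\mu_{\mathrm{ST}}(\theta)=0$ unless $\ell a\in\{0,2\}$ and $\ell a=2$ is impossible for $\ell\ge 1$ once $a\ge 3$. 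Hence $D_{r_0+2},D_{r_0+4},\dots,D_r$ share one common limiting value-distribution, whose density I would take as $\mathcal{M}_\sigma^{\ast}$ (concretely, the density produced by \cite[Theorem~1.5]{mu} for $\log L_{\mathbb{P}_q}(\mathrm{Sym}_f^3,\cdot)-\log L_{\mathbb{P}_q}(\mathrm{Sym}_f^1,\cdot)$), nonnegative by construction. Assembling the three contributions gives the asserted formula; for $r\in\{1,2\}$ one has $\rho=0$ and it is just Theorem~\ref{main1}. The remaining point is a caveat rather than an obstacle: $\Psi_1$ is neither bounded nor compactly supported, so Theorem~\ref{main1} and \cite[Theorem~1.5]{mu} must be known for it — which is precisely the hypothesis, discussed in Remark~\ref{Psi_1-valid}.
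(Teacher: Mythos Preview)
Your proposal is correct and follows essentially the same route as the paper: telescope $\log L_{\mathbb{P}_q}(\mathrm{Sym}_f^r,\sigma)$ down to the $\rho=0$ base case via the successive differences $D_a$, use the additivity of $\Psi_1$ to split the average, then apply Theorem~\ref{main1} to the base term and \cite[Theorem~1.5]{mu} to each difference. The paper phrases this as an induction on $r$ rather than a single telescoping sum, and it simply takes from \cite{mu} the fact that the density $\mathcal{M}_\sigma^{\ast}$ is the same at every step (so your Sato--Tate argument for the $a$-independence of the law of $\cos(a\theta_{f,p})$ for $a\geq 3$ is extra detail rather than a new idea), but the content is the same.
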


\begin{rem}\label{Psi_1-valid}
In the above corollary we assume that Theorem \ref{main1}
and \cite[Theorem~1.5]{mu} are valid for
$\Psi=\Psi_1$.    Because of Remark \ref{abs-conv-case} (and the corresponding fact
for \cite[Theorem~1.5]{mu}), this assumption is indeed true for $\sigma>1$.
When $1/2<\sigma\leq 1$, we cannot prove this assumption at present, but it is
plausible in view of the results in \cite{im-moscow}.
\end{rem}
\begin{rem}
Corollary \ref{main2} suggests that the $M$-function 
$\mathcal{M}_{\sigma}(\mathrm{Sym}^{\rho}, u)$ for $\rho\geq 1$ may be given by
\[
\mathcal{M}_{\sigma}(\mathrm{Sym}^{\rho}, u)
=
\frac{1}{\rho+1}
\big(
\mathcal{M}_{\sigma}(\mathrm{Sym}^0, u)
+
\rho\mathcal{M}_{\sigma}^{\ast}(u)
\big).
\]
\end{rem}
%
%
\begin{rem}
In \cite{im_2011} and \cite{mu}, the function $\Psi$ was assumed to be a bounded continuous function or a compactly supported characteristic function. But the second-named author \cite{matsumoto}
noticed that ``compactly supported characteristic function'' should be replaced by
``compactly supported Riemann integrable function''.
\end{rem}
%
%
We mention a corollary of Theorem~\ref{main1} and Corollary~\ref{main2}.
For other averages
\[
\Avg_m\Psi (\log L_{\mathbb{P}_q}(\mathrm{Sym}_f^{r},\sigma))
=
\lim_{X \to \infty}
\frac{1}{\pi(X)} \sum_{\substack{q \leq X\\ q :\mathrm{prime}\\ m : \mathrm{fixed}}} 
\sum_{f \in S_k(q^m)}^{\qquad\prime}
\Psi (\log L_{\mathbb{P}_q}(\mathrm{Sym}_f^{r}, \sigma))
\]
where $\pi(X)$ denotes the number of prime numbers not larger than $X$, and
\begin{align*}
&\Avg^* \Psi (\log L_{\mathbb{P}_q}(\mathrm{Sym}_f^{r},\sigma))
\\
=&
\lim_{X \to \infty}
\frac{1}{\pi^*(X)}
\underset{q^m \leq X}{\sum_{q :\text{prime}}\sum_{1 \leq m}}
\sum_{f \in S_k(q^m)}^{\qquad\prime}
\Psi (\log L_{\mathbb{P}_q}(\mathrm{Sym}_f^{r},\sigma)),
\end{align*}
where $\pi^*(X)$ denotes the number of all pairs $(q,m)$
of a prime number $q$ and a positive integer $m$ with $q^m\leq X$.
Theorem~\ref{main1} imply the following corollary.
The proof is same as that in \cite{mu}.
\begin{cor}\label{main-cor}
Under the same assumptions of Theorem~\ref{main1} and Corollary~\ref{main2},
we have
\begin{align*}
&\Avg_m \Psi (\log L_{\mathbb{P}_q}(\mathrm{Sym}_f^{r}, \sigma))
=
\Avg^* \Psi (\log L_{\mathbb{P}_q}(\mathrm{Sym}_f^{r}, \sigma))
\\
=&
\Avg\Psi (\log L_{\mathbb{P}_q}(\mathrm{Sym}_f^{r}, \sigma)).
\end{align*}
\end{cor}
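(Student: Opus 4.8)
The plan is to derive Corollary~\ref{main-cor} from Theorem~\ref{main1} (and, for $r\ge 3$ with $\Psi=\Psi_1$, from Corollary~\ref{main2}) by a purely elementary averaging argument, exactly as in \cite{mu}. Abbreviate
\[
a_{q,m}:=\sum_{f\in S_k(q^m)}^{\prime}\Psi\bigl(\log L_{\mathbb{P}_q}(\mathrm{Sym}_f^r,\sigma)\bigr),
\]
and let $I$ denote the right-hand side of \eqref{main-formula} (resp.\ of the formula in Corollary~\ref{main2} when $r\ge 3$ and $\Psi=\Psi_1$). By the very definition of the symbol $q^m\to\infty$, the content of Theorem~\ref{main1} is that $a_{q,m}\to I$ along \emph{both} routes: (i)~$q$ fixed and $m\to\infty$, and (ii)~$m$ fixed and $q\to\infty$ through primes; in particular $\Avg\Psi(\log L_{\mathbb{P}_q}(\mathrm{Sym}_f^r,\sigma))=I$, so it remains only to show that the two further averages $\Avg_m$ and $\Avg^*$ of the $a_{q,m}$ equal $I$ as well.

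First I would record the crude bound $a_{q,m}\ll_{\Psi}\log(q^m+2)$, valid for all $q,m$. This is immediate from the boundedness of $\Psi$ and of the harmonic count $\sum_{f\in S_k(q^m)}^{\prime}1$ when $\Psi$ is bounded continuous or compactly supported Riemann integrable, while for $\Psi=\Psi_1$ it follows from the GRH bound $\log L_{\mathbb{P}_q}(\mathrm{Sym}_f^r,\sigma)\ll\log(q^m+2)$ together with the boundedness of the same harmonic count. (Only at most polynomial growth in $\log(q^m)$ will be used below.)

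Next, for fixed $m$ I would handle $\Avg_m$. By route (ii), for every $\varepsilon>0$ there is $Q_0$ with $|a_{q,m}-I|<\varepsilon$ for all primes $q\ge Q_0$; splitting the sum over primes $q\le X$ at $Q_0$ gives
\[
\Bigl|\frac{1}{\pi(X)}\sum_{\substack{q\le X\\ q:\,\mathrm{prime}}}a_{q,m}-I\Bigr|
\le\frac{1}{\pi(X)}\sum_{\substack{q<Q_0\\ q:\,\mathrm{prime}}}|a_{q,m}-I|+\varepsilon,
\]
whose $\limsup$ as $X\to\infty$ is $\le\varepsilon$ (the first term being a fixed constant over $\pi(X)$); letting $\varepsilon\to 0$ yields $\Avg_m\Psi(\log L_{\mathbb{P}_q}(\mathrm{Sym}_f^r,\sigma))=I$. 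For $\Avg^*$ I would first observe that the pairs $(q,m)$ with $q^m\le X$ and $m\ge 2$ number at most $\sum_{2\le m\le\log_2 X}\pi(X^{1/m})\ll\sqrt X=o(\pi(X))$, so $\pi^*(X)=\pi(X)(1+o(1))$ and, by the growth bound of the previous paragraph, the total contribution of those pairs to $\frac{1}{\pi^*(X)}\sum_{q^m\le X}a_{q,m}$ is $\ll\pi^*(X)^{-1}\sqrt X\log X\to 0$. Hence $\frac{1}{\pi^*(X)}\sum_{q^m\le X}a_{q,m}=\frac{1}{\pi(X)}\sum_{q\le X}a_{q,1}+o(1)\to I$ by the case $m=1$ just treated, i.e.\ $\Avg^*\Psi(\log L_{\mathbb{P}_q}(\mathrm{Sym}_f^r,\sigma))=I$. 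Combining the three identities finishes the proof.

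I do not expect a genuine obstacle here: once Theorem~\ref{main1} supplies the limit $I$ along the two coordinate routes, everything reduces to the fact that Ces\`aro-type averaging (here, averaging over primes, and over pairs $(q,m)$ with $q^m\le X$) preserves a limit. The only points to keep track of are the uniform growth bound on $a_{q,m}$---needed to discard, in the $\Avg^*$-sum, the $o(\pi(X))$ pairs with $m\ge 2$---and the elementary estimate $\pi(X^{1/m})=o(\pi(X))$ for $m\ge 2$, which makes the double average collapse onto its $m=1$ part.
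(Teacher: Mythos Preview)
Your proposal is correct and follows essentially the same approach as the paper, which simply states ``The proof is same as that in \cite{mu}'' without further detail; your argument---using that Theorem~\ref{main1} gives $a_{q,m}\to I$ along route~(ii) for each fixed $m$, then applying Ces\`aro averaging over primes for $\Avg_m$, and finally observing $\pi^*(X)=\pi(X)(1+o(1))$ with the $m\ge 2$ pairs contributing negligibly to handle $\Avg^*$---is precisely the elementary averaging argument one expects from \cite{mu}.
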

%
%
\section{Preparation}\label{sec2}
\par
In the proof of our main Theorem~\ref{main1}, we will use the following
formula (\eqref{P} below) for a prime number $q$, 
which was shown by the third-named author~\cite[Lemma~3]{ichihara},
using Petersson's formula.
\par
When $2\leq k < 12$ or $k=14$,
for the primitive form $f$ of weight $k$ and level $q^m$, 
we have
\begin{equation}\label{P}
\sum_{f\in S_k(q^m)}^{\qquad\prime}
\lambda_f(n)
=
\delta_{1, n}
+
\begin{cases} 
O(n^{(k-1)/2} q^{-k+1/2})
& m=1, \\
O(n^{(k-1)/2} q^{m(-k+1/2)}q^{k-3/2})
& m\geq 2, 
\end{cases}
\end{equation}
where $\delta_{1, n}=1$ if $n=1$ and $0$ otherwise.

\begin{rem}
The implied constants on the right-hand side depends on $k$, but in the presnt paper 
we restrict $k$ to $2\leq k<12$ or $k=14$, so we may say that the implied constants
above are absolute.
This restriction of the range of $k$ is coming from the matter how to construct 
the basis of the space of old forms in \cite{ichihara} (see
\cite[Remarks 1 and 4]{ichihara}).
\end{rem}

We denote the error term in \eqref{P} by
$n^{(k-1)/2} E(q^m)$, that is
\[
\sum_{f\in S_k(q^m)}^{\qquad\prime}
\lambda_f(n)
-
\delta_{1, n}
=
n^{(k-1)/2}E(q^m).
\]
Then we have
\begin{equation}\label{E1}
E(q^m)\ll q^{-k+1/2}
\end{equation}
for any $m$, 
and 
\begin{align}\label{E2}
E(q^m)\ll& \begin{cases}
          q^{-3/2} & m=1,\\
          q^{-5/2} & m=2,\\
         q^{-1-m}   & m\geq 3,
          \end{cases}
\nonumber \\
\ll& q^{-m-1/2}
\end{align}
for any $m \geq 1$.
Also in the case $n=1$, the formula \eqref{P} implies
\begin{equation}\label{P1}
\sum_{f\in S_k(q^m)}^{\qquad\prime}
\lambda_f(1)
=
\sum_{f\in S_k(q^m)}^{\qquad\prime}
1
=
1+E(q^m) \ll 1.
\end{equation}

Let $\mathcal{P}_q$ be a finite subset of $\mathbb{P}_q$.
For a primitive form $f$ of weight $k$ and level $q^m$,
we define
\[
L_{\mathcal{P}_q}(\mathrm{Sym}_f^{r}, s)
=
\prod_{p \in \mathcal{P}_q}
\prod_{h=0}^{r}
(1-\alpha_f^{r-h}(p)\beta_f^h(p)p^{-s})^{-1}
\]
for $\sigma > 1/2$.
When $r$ is even, this can be written as
\begin{align*}
L_{\mathcal{P}_q}(\mathrm{Sym}_f^{r}, s)
=
\prod_{p \in \mathcal{P}_q}
\Big(\prod_{\substack{h=0 \\ r-2h\neq 0}}^{r}
(1-\alpha_f^{r-h}(p)\beta_f^h(p)p^{-s})^{-1}
\Big)
(1-p^{-s})^{-1}.
\end{align*}
Since $\alpha_f^{r-h}(p)\beta_f^h(p)=\alpha_f^{r-2h}(p)=\beta_f^{2h-r}(p)$,
using $\rho$ we may write
\begin{align}\label{alternative-exp}
\lefteqn{L_{\mathcal{P}_q}(\mathrm{Sym}_f^{r}, s)}\notag\\
&=
\prod_{p\in\mathcal{P}_q}\prod_{h=0}^{\rho}(1-\alpha_f^{r-2h}(p)p^{-s})^{-1}
(1-\beta_f^{r-2h}(p)p^{-s})^{-1}(1-\delta_{r,\text{even}}p^{-s})^{-1},
\end{align}
where $\delta_{r,\text{even}}=1$ if $r$ is even, and $0$ otherwise.

Let $\mathcal{T}_{\mathcal{P}_q}=\prod_{p\in \mathcal{P}_q} \mathcal{T}$
with $\mathcal{T}=\{t\in\mathbb{C}\mid |t|=1\}$.
For a fixed $\sigma>1/2$ and $t_p\in\mathcal{T}$
we define 
\begin{align*}
g_{\sigma, p}(t_p)=&-\log (1-t_pp^{-\sigma})
\\
\mathscr{G}_{\sigma, p}(t_p)
=&
g_{\sigma, p}(t_p)+g_{\sigma, p}(\overline{t_p})+g_{\sigma, p}(\delta_{r,\text{even}})
\\
=&
-2\log|1-t_p p^{-\sigma}|-\log (1-\delta_{r,\text{even}}p^{-\sigma}),
\end{align*}
where $\overline{t_p}$ is the complex conjugate of $t_p$.
Here $g_{\sigma, p}(t_p)$ is the same symbol as in \cite{mu}.
Putting $t_p=e^{i\eta_p}$, we may also write
\begin{align}\label{cos-expression}
\mathscr{G}_{\sigma, p}(e^{i\eta_p})=
-\log(1-2 p^{-\sigma}\cos\eta_p+p^{-2\sigma})-\log (1-\delta_{r,\text{even}}p^{-\sigma}).
\end{align}
Further, for 
$t_{\mathcal{P}_q}=(t_p)_{p\in\mathcal{P}_q} \in\mathcal{T}_{\mathcal{P}_q}$, let
\begin{align*}
g_{\sigma, \mathcal{P}_q}(t_{\mathcal{P}_q})
=&
\sum_{p\in\mathcal{P}_q} g_{\sigma, p}(t_p),
\\
g_{\sigma, \mathcal{P}_q}(\delta_{r,\text{even}})
=&
\sum_{p\in\mathcal{P}_q} 
g_{\sigma, p}(\delta_{r,\text{even}}),
\\
\mathscr{G}_{\sigma, \mathcal{P}_q}(t_{\mathcal{P}_q})
=&
\sum_{p\in\mathcal{P}_q} \mathscr{G}_{\sigma, p}(t_p)
=
\sum_{p\in\mathcal{P}_q}
g_{\sigma, p}(t_p)
+
\sum_{p\in\mathcal{P}_q}
g_{\sigma, p}(\overline{t_p})
+
\sum_{p\in\mathcal{P}_q} g_{\sigma, p}(\delta_{r,\text{even}})
\\
=&
g_{\sigma, \mathcal{P}_q}(t_{\mathcal{P}_q})
+
g_{\sigma, \mathcal{P}_q}(\overline{t_{\mathcal{P}_q}})
+
g_{\sigma, \mathcal{P}_q}(\delta_{r,\text{even}}).
\end{align*}
%
Using \eqref{alternative-exp},
for any $\sigma >1/2$ we have 
\begin{align*}
&
\log L_{\mathcal{P}_q} (\mathrm{Sym}_f^{r}, \sigma)
\nonumber\\
=&
\sum_{p \in \mathcal{P}_q}
\sum_{h=0}^{\rho}
\Big(
-\log(1-\alpha_f^{r-2h}(p)p^{-\sigma})
-\log(1-\beta_f^{r-2h}(p)p^{-\sigma})
\Big)
\nonumber\\
&
-
\sum_{p\in\mathcal{P}_q}
\log(1-\delta_{r,\text{even}}p^{-\sigma})
\nonumber\\
=&
\sum_{p \in \mathcal{P}_q}
\sum_{h=0}^{\rho}
\Big(
g_{\sigma, p}(\alpha_f^{r-2h}(p))
+g_{\sigma, p}(\beta_f^{r-2h}(p))
\Big)
+
\sum_{p\in\mathcal{P}_q}
g_{\sigma, p}(\delta_{r,\text{even}})
\nonumber\\
=&
\sum_{h=0}^{\rho}
\Big(
g_{\sigma, \mathcal{P}_q}(\alpha_f^{r-2h}(\mathcal{P}_q))
+g_{\sigma, \mathcal{P}_q}(\beta_f^{r-2h}(\mathcal{P}_q))
\Big)
+
g_{\sigma, \mathcal{P}_q}(\delta_{r,\text{even}}),
\end{align*}
where
$\alpha_f^{\mu}(\mathcal{P}_q)=(\alpha_f^{\mu}(p))_{p\in\mathcal{P}_q}$
and $\beta_f^{\mu}(\mathcal{P}_q)=(\beta_f^{\mu}(p))_{p\in\mathcal{P}_q}$.
Especially, in the case $r=1, 2$ which means $\rho=0$, we have
\begin{equation}\label{g_sigma}
\log L_{\mathcal{P}_q} (\mathrm{Sym}_f^{r}, \sigma)
=
\mathscr{G}_{\sigma, \mathcal{P}_q}(\alpha_f^r(\mathcal{P}_q)).
\end{equation}
We sometimes write $\alpha_f(p)=e^{i\theta_f(p)}$ and  
$\beta_f(p)=e^{-i\theta_f(p)}$ for $\theta_f(p)\in [0,\pi]$.
\par
In the case $\rho=0$ and $\sigma>1$, 
we deal with the value  
$L_{\mathbb{P}_q}(\mathrm{Sym}_f^{r}, s)$
as the limit of the value  
$L_{\mathcal{P}_q}(\mathrm{Sym}_f^{r}, s)$
as $\mathcal{P}_q$ tends to $\mathbb{P}_q$.
In fact, from \eqref{g_sigma} we have 
\begin{equation}\label{g_sigma2}
\log L_{\mathbb{P}_q}(\mathrm{Sym}_f^{r}, \sigma)
=
\lim_{\substack{\mathcal{P}_q\to\mathbb{P}_q\\ \mathcal{P}_q\subset {\mathbb{P}_q}}}
\sum_{h=0}^{\rho}
\mathscr{G}_{\sigma, \mathcal{P}_q}(\alpha_f^{r-2h}(\mathcal{P}_q))
\end{equation}
In the case $\rho=0$ and $1\geq \sigma>1/2$, 
we will prove the relation between
$\log L_{\mathbb{P}_q}(\mathrm{Sym}_f^{r}, \sigma)$
and
$\log L_{\mathcal{P}_q}(\mathrm{Sym}_f^{r}, \sigma)$
with a suitable finite subset $\mathcal{P}_q \subset \mathbb{P}_q$ 
depending on $q^m$ and will consider the averages of them.

In Sections~3, 4 and 5, we will prove Theorem~\ref{main1}.
The proof of Corollary~\ref{main2} will be completed in Section 6.
%
%
\section{The density function ${\mathcal{M}}_{\sigma}(\mathrm{Sym}^0, u)$ and its Fourier 
transform}\label{sec3}
\par
In this section we first construct the density function 
${\mathcal{M}}_{\sigma, \mathcal{P}_q}(\mathrm{Sym}^0, u)$
for a finite set $\mathcal{P}_q \subset \mathbb{P}_q$
in the case $r=1, 2$.
By $|\mathcal{P}_q|$ we denote the number of the elements of $\mathcal{P}_q$.
Let
\[
\Theta_{\mathcal{P}_q}
=\prod_{p\in\mathcal{P}_q} [0, \pi),
\]
and define the modified Sato-Tate measure on $\Theta_{\mathcal{P}_q}$ by
\[
d^{\rm ST} \theta_{\mathcal{P}_q}
=\prod_{p\in\mathcal{P}_q}
\bigg(\frac{2\sin^2 \theta_p}{\pi} d\theta_p\bigg),
\]
where
$\theta_{\mathcal{P}_q}=(\theta_p)_{p\in\mathcal{P}_q}\in \Theta_{\mathcal{P}_q}$.
We also define the normalized 
Haar measure on $\mathcal{T}_{\mathcal{P}_q}$ by
\[
d^H t_{\mathcal{P}_q}
=
\prod_{p\in\mathcal{P}_q} d^H t_p
=
\prod_{p\in\mathcal{P}_q} 
 \frac{dt_p}{2\pi it_p}.
 \]

The following proposition is an analogue of \cite[Proposition 3.1]{mu}, but it is a
big difference that here we use the modified Sato-Tate measure.

%
%
\begin{prop}~\label{M_P}
Let $r=1$ or $2$ {\rm(}and hence $\rho=0${\rm)}.
For any $\sigma >1/2$, there exists a $\mathbb{R}$-valued, non-negative function
$\mathcal{M}_{\sigma, \mathcal{P}_q}(\mathrm{Sym}^0, u)$ defined on $\mathbb{R}$
which satisfies following two properties.
\begin{itemize}
\item The support of $\mathcal{M}_{\sigma, \mathcal{P}_q}(\mathrm{Sym}^0, u)$ is compact.
\item For any continuous function $\Psi$ on $\mathbb{R}$,
we have
\begin{align}\label{prop-1-1}
&\int_{\mathbb{R}}
\mathcal{M}_{\sigma, \mathcal{P}_q}(\mathrm{Sym}^0, u) \Psi(u)
\frac{du}{\sqrt{2\pi}}
=
\int_{\Theta_{\mathcal{P}_q}}
\Psi\big(
\mathscr{G}_{\sigma, \mathcal{P}_q} (e^{i\theta_{\mathcal{P}_q}r})
\big)
d^{\rm ST} \theta_{\mathcal{P}_q}\notag\\
&\qquad=
\int_{\mathcal{T}_{\mathcal{P}_q}} 
\Psi(
\mathscr{G}_{\sigma, \mathcal{P}_q}( t_{\mathcal{P}_q}^r)
)
\prod_{p\in\mathcal{P}_q}
\bigg(\frac{t_p^2 -2 + t_p^{-2}}{-2}\bigg)
d^H t_{\mathcal{P}_q},
\end{align}
\end{itemize}
In particular, taking $\Psi\equiv 1$ in \eqref{prop-1-1}, we have
\begin{align}\label{prop-1-2}
\int_{\mathbb{R}} \mathcal{M}_{\sigma, \mathcal{P}_q}(\mathrm{Sym}^0, u) 
\frac{du}{\sqrt{2\pi}}=1. 
\end{align}
\end{prop}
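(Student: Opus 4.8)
The plan is to construct $\mathcal{M}_{\sigma,\mathcal{P}_q}(\mathrm{Sym}^0,u)$ as the density of the pushforward of the modified Sato–Tate measure under the map $\theta_{\mathcal{P}_q}\mapsto \mathscr{G}_{\sigma,\mathcal{P}_q}(e^{i\theta_{\mathcal{P}_q}r})$, and then verify that this pushforward is absolutely continuous with respect to Lebesgue measure, so that a genuine density function exists. First I would observe that, by \eqref{cos-expression}, each coordinate map $\eta_p\mapsto \mathscr{G}_{\sigma,p}(e^{ir\eta_p})$ is a real-analytic function of $\eta_p$ on $[0,\pi)$ which is bounded (since $\sigma>1/2$ keeps $p^{-\sigma}<1$, so $1-2p^{-\sigma}\cos(r\eta_p)+p^{-2\sigma}$ is bounded away from $0$); hence $\mathscr{G}_{\sigma,\mathcal{P}_q}$ is a bounded continuous function on the compact set $\Theta_{\mathcal{P}_q}$, and its range is a compact subset of $\mathbb{R}$. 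This immediately forces any density to be compactly supported, giving the first bullet. For the second bullet I would define $\mathcal{M}_{\sigma,\mathcal{P}_q}(\mathrm{Sym}^0,u)$ via its defining integral identity against test functions — i.e. declare
\[
\int_{\mathbb{R}}\mathcal{M}_{\sigma,\mathcal{P}_q}(\mathrm{Sym}^0,u)\Psi(u)\frac{du}{\sqrt{2\pi}}
:=\int_{\Theta_{\mathcal{P}_q}}\Psi\big(\mathscr{G}_{\sigma,\mathcal{P}_q}(e^{i\theta_{\mathcal{P}_q}r})\big)\,d^{\rm ST}\theta_{\mathcal{P}_q}
\]
— and then argue that the right-hand side, as a functional of $\Psi$, is represented by an $L^1$ density. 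The cleanest route is to note it suffices to handle a single prime (the multi-prime case follows by taking convolutions, since $\mathscr{G}_{\sigma,\mathcal{P}_q}$ is a sum over $p$ and $d^{\rm ST}\theta_{\mathcal{P}_q}$ is a product measure, and convolution of $L^1$ densities is again an $L^1$ density).

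For a single prime $p$, write $x=\cos(r\eta_p)$; as $\eta_p$ ranges over $[0,\pi)$ the variable $r\eta_p$ wraps around, but $\mathscr{G}_{\sigma,p}$ depends on $\eta_p$ only through $\cos(r\eta_p)$, and the function $u=h(x):=-\log(1-2p^{-\sigma}x+p^{-2\sigma})-\log(1-\delta_{r,\text{even}}p^{-\sigma})$ is a strictly monotone smooth function of $x\in[-1,1]$ with non-vanishing derivative $h'(x)=2p^{-\sigma}/(1-2p^{-\sigma}x+p^{-2\sigma})>0$. Pushing the measure $\tfrac{2}{\pi}\sin^2\theta_p\,d\theta_p$ forward to the $x$-variable, then through the diffeomorphism $h$, produces an explicit density in $u$ with the Jacobian factors $1/h'(x)$ and the change of variables from $\theta_p$ to $x=\cos(r\eta_p)$ (the latter being the standard semicircle-type density coming from the Sato–Tate weight); this yields an honest continuous, compactly supported, non-negative density. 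The second displayed equality in \eqref{prop-1-1} is then a bookkeeping identity: writing $t_p=e^{i\theta_p}$, one has $\tfrac{2\sin^2\theta_p}{\pi}d\theta_p = \tfrac{2-t_p^2-t_p^{-2}}{2}\cdot\tfrac{dt_p}{2\pi i t_p}\cdot(\text{factor of }2)$ — more precisely $2\sin^2\theta_p = 1-\cos 2\theta_p = 1-\tfrac{t_p^2+t_p^{-2}}{2} = \tfrac{-(t_p^2-2+t_p^{-2})}{2}$, and $d\theta_p = \tfrac{dt_p}{it_p}$ with the Haar normalization $d^Ht_p=\tfrac{dt_p}{2\pi i t_p}$, so that $\tfrac{2\sin^2\theta_p}{\pi}d\theta_p$ over $[0,\pi)$ matches $\big(\tfrac{t_p^2-2+t_p^{-2}}{-2}\big)d^Ht_p$ over the full circle $\mathcal{T}$ after accounting for the $\theta_p\mapsto -\theta_p$ symmetry of the integrand (both $\mathscr{G}_{\sigma,p}(t_p^r)$ and the weight are invariant under $t_p\mapsto\overline{t_p}$). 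I would spell this out carefully to make sure the normalization constants land correctly.

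Finally, \eqref{prop-1-2} is immediate: take $\Psi\equiv 1$ in \eqref{prop-1-1}, so the integral becomes $\int_{\Theta_{\mathcal{P}_q}}d^{\rm ST}\theta_{\mathcal{P}_q}=\prod_{p\in\mathcal{P}_q}\int_0^\pi \tfrac{2\sin^2\theta_p}{\pi}\,d\theta_p = \prod_{p\in\mathcal{P}_q}1 = 1$, using the elementary fact that the modified Sato–Tate measure is a probability measure on $[0,\pi)$. I expect the main obstacle to be purely organizational rather than conceptual: getting the chain of changes of variables ($\theta_p\leadsto \cos r\theta_p$, accounting for the winding when $r=2$, then $\leadsto u$ via $h$) to produce exactly the asserted Jacobian and normalization, and confirming that the semicircle factor in $d^{\rm ST}$ survives the substitution cleanly so that the resulting single-prime density is bounded (which it is, precisely because $h'$ is bounded away from $0$ and the $\sin^2$ weight is bounded). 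Once the one-prime case is in hand, the finite convolution closes out the argument with no further difficulty.
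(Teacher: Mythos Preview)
Your approach is essentially the paper's: build the single-prime density by a change of variables (the paper parametrizes bijectively via $\cos\theta_p$ and inserts a factor $\sin^2(\theta_p/r)$, then for $r=2$ substitutes $\theta_p=2\theta_p'$ and uses the $\theta_p'\mapsto\pi-\theta_p'$ symmetry to land on the Sato--Tate integral; your route via the intermediate variable $x=\cos(r\theta_p)$ is an equivalent reorganization of the same computation), then obtain the multi-prime density by iterated convolution, and finally deduce the Haar-measure form from the $t_p\mapsto\overline{t_p}$ invariance of both $\mathscr{G}_{\sigma,p}(t_p^r)$ and the weight.

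One small correction that does not affect the argument: for $r=2$ the single-prime density is \emph{not} bounded. Pushing $\tfrac{2}{\pi}\sin^2\theta_p\,d\theta_p$ forward to $x=\cos(2\theta_p)$ gives a density proportional to $\sqrt{(1-x)/(1+x)}$, which has an integrable singularity at $x=-1$; equivalently, the paper's explicit formula carries a factor $\tan(\theta_p/2)$ that blows up as $\theta_p\to\pi^-$. Your justification ``$h'$ bounded away from $0$ and the $\sin^2$ weight bounded'' overlooks the Jacobian of $\theta_p\mapsto x$, which vanishes there. Since you only need the density to be in $L^1$ for the convolution step (as you correctly state earlier), this is a cosmetic slip, not a gap.
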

\begin{proof}
We construct the function $\mathcal{M}_{\sigma, \mathcal{P}_q}(\mathrm{Sym}^0, u)$
by using the method similar to that in \cite{im_2011}.
In the case $|\mathcal{P}_q|=1$ namely $\mathcal{P}_q=\{p\}$, 
we define a one-to-one correspondence $\theta_p\mapsto u$ from the interval
$[0, \pi)$ to another interval
\begin{align*}
&
A(\sigma, p)
\\
&=
\left(-2\log (1+p^{-\sigma})-\log(1-\delta_{r,\text{even}}p^{-\sigma}), -2\log (1-p^{-\sigma})-\log(1-\delta_{r,\text{even}}p^{-\sigma})\right]\\
&\subset \mathbb{R}
\end{align*}
by 
\begin{align*}
u
=&
-\log(1-2p^{-\sigma}\cos \theta_p+p^{-2\sigma})
-\log(1-\delta_{r,\text{even}}p^{-\sigma})
\\
=&
\begin{cases}
-2\log |1-e^{i\theta_p}p^{-\sigma}| & r=1
\\
-\log(1-(2\cos\theta_p+1)p^{-\sigma}+(2\cos\theta_p+1)p^{-2\sigma}
-p^{-3\sigma}) & r=2.
\end{cases}
\end{align*}
%
%
The definition of $\mathcal{M}_{\sigma, \{p\}}(\mathrm{Sym}^0, u)=\mathcal{M}_{\sigma, p}(\mathrm{Sym}^0, u)$ is 
\[
\mathcal{M}_{\sigma, p}(\mathrm{Sym}^0, u)
=
\begin{cases}
  \displaystyle
\frac{\sqrt{2\pi}|1-e^{i\theta_p}p^{-\sigma}|^2\sin^2(\theta_p/r)}{-\pi p^{-\sigma}\sin\theta_p}
  & \displaystyle u \in A(\sigma,  p),\\
  0 & \text{otherwise},
\end{cases}
\]
where we see that
\[
\frac{\sqrt{2\pi}|1-e^{i\theta_p}p^{-\sigma}|^2\sin^2(\theta_p/r)}{-\pi p^{-\sigma}\sin\theta_p}
=
\begin{cases}
  \displaystyle \frac{\sqrt{2\pi}|1-e^{i\theta_p}p^{-\sigma}|^2\sin \theta_p}{-\pi p^{-\sigma}} & r=1\\
  \displaystyle \frac{\sqrt{2\pi}|1-e^{i\theta_p}p^{-\sigma}|^2\tan(\theta_p/2)}{-2 \pi p^{-\sigma}} & r=2
\end{cases}
\]
for $\theta_p\in [0, \pi)$.    

We show that
this function satisfies the properties required by Proposition~\ref{M_P}.
In fact, using 
\[
  \frac{du}{d\theta_p}
  =
    -\frac{2p^{-\sigma} \sin \theta_p}{1-2p^{-\sigma}\cos \theta_p +p^{-2\sigma}}
  =
   -\frac{2p^{-\sigma} \sin \theta_p}{|1-e^{i \theta_p}p^{-\sigma}|^2},
\]
we obtain 
\begin{align}\label{st_p0}
&
\int_{\mathbb{R}}
\Psi(u)    
\mathcal{M}_{\sigma, p}(\mathrm{Sym}^0, u) \frac{du}{\sqrt{2\pi}}
=
\int_{A(\sigma, p)}
\Psi(u) \mathcal{M}_{\sigma, p}(\mathrm{Sym}^0, u) \frac{du}{\sqrt{2\pi}}
\nonumber\\
=&
\int_0^{\pi}
\Psi\Big(-\log(1-2p^{-\sigma}\cos \theta_p +p^{-2\sigma})
-\log(1-\delta_{r,\text{even}}p^{-\sigma})
\Big)
\nonumber\\
&\times
\frac{|1-e^{i\theta_p}p^{-\sigma}|^2\sin^2(\theta_p/r)}{-\pi p^{-\sigma}\sin\theta_p}
\frac{-2p^{-\sigma} \sin \theta_p}{|1-e^{i \theta_p}p^{-\sigma}|^2}
d\theta_p
\nonumber\\
=&
\int_0^{\pi}
\Psi\Big(-\log(1-2p^{-\sigma}\cos \theta_p +p^{-2\sigma})
-\log(1-\delta_{r,\text{even}}p^{-\sigma})
\Big)
  \frac{2\sin^2(\theta_p/r)}{\pi}
d\theta_p.
\end{align} 
If $r=2$, (putting $\theta_p=2\theta_p^{\prime}$)
we see that the right-hand side is
\begin{align*}
=
\int_0^{\pi/2}
\Psi\Big(-\log(1-2p^{-\sigma}\cos 2\theta_p^{\prime} +p^{-2\sigma})
-\log(1-p^{-\sigma})
\Big)
\frac{4\sin^2\theta_p^{\prime}}{\pi}
d\theta_p^{\prime},
\end{align*}
which is further equal to
\begin{align*}
\int_0^{\pi}
\Psi\Big(-\log(1-2p^{-\sigma}\cos 2\theta_p +p^{-2\sigma})
-\log(1-p^{-\sigma})
\Big)
\frac{2\sin^2\theta_p}{\pi}
d\theta_p,
\end{align*}
because putting $\theta_p^{\prime\prime}=\pi-\theta_p^{\prime}$ we have
\begin{align*}
  &
\int_0^{\pi/2}
\Psi\Big(-\log(1-2p^{-\sigma}\cos 2\theta_p^{\prime} +p^{-2\sigma})
-\log(1-p^{-\sigma})
\Big)
\frac{2\sin^2\theta_p^{\prime}}{\pi}
d\theta_p^{\prime}
\\
=&
\int_{\pi/2}^{\pi}
\Psi\Big(-\log(1-2p^{-\sigma}\cos 2\theta_p^{\prime\prime} +p^{-2\sigma})
-\log(1-p^{-\sigma})
\Big)
\frac{2\sin^2\theta_p^{\prime\prime}}{\pi}
d\theta_p^{\prime\prime}.
\end{align*}
Therefore, for $r=1, 2$, we have
\begin{align}\label{st_p}
&
\int_{\mathbb{R}}
\Psi(u)    
\mathcal{M}_{\sigma, p}(\mathrm{Sym}^0, u) \frac{du}{\sqrt{2\pi}}
\nonumber\\
=&
\int_0^{\pi}
\Psi\Big(-\log(1-2p^{-\sigma}\cos (r\theta_p^{\prime}) +p^{-2\sigma})
-\log(1-\delta_{r,\text{even}}p^{-\sigma})
\Big)
\frac{2\sin^2 \theta_p}{\pi}d\theta_p
\nonumber\\
=&
\int_0^{\pi}
\Psi\big(\mathscr{G}_{\sigma, p}(e^{i \theta_p r})\big)
d^{\rm ST} \theta_p
\end{align}
by \eqref{cos-expression},
where $d^{\rm ST}\theta_p=d^{\rm ST}\theta_{\{p\}}$.

We further calculate the right-hand side of \eqref{st_p}.
Since putting $\eta_p=2\pi-\theta_p$ we have
\begin{align*}
  &
\int_{\pi}^{2\pi}
\Psi\Big(-\log(1-2p^{-\sigma}\cos(r\theta_p) +p^{-2\sigma})
-\log(1-\delta_{r,\text{even}}p^{-\sigma})
\Big)
\frac{2\sin^2\theta_p}{\pi}d\theta_p
\\
=&
\int_{0}^{\pi}
\Psi\Big(-\log(1-2p^{-\sigma}\cos(r\eta_p) +p^{-2\sigma})
-\log(1-\delta_{r,\text{even}}p^{-\sigma})
\Big)
d^{\rm ST}\eta_p,
\end{align*}
we find that the right-hand side of \eqref{st_p} is equal to
\begin{align}\label{haar_p}
\frac{1}{2}
&\int_0^{2\pi}
\Psi\Big(-\log(1-2p^{-\sigma}\cos(r\theta_p) +p^{-2\sigma})
-\log(1-\delta_{r,\text{even}}p^{-\sigma})
\Big)
d^{\rm ST} \theta_p
\nonumber\\
=&
\frac{1}{2}
\int_0^{2\pi}
\Psi\big(\mathscr{G}_{\sigma, p}(e^{ir\theta_p})
\big)
\frac{2}{\pi}\big(\frac{e^{i \theta_p}-e^{-i \theta_p}}{2i}\big)^2
d\theta_p
\nonumber\\
=&
\int_0^{2\pi}
\Psi\big(
\mathscr{G}_{\sigma, p}(e^{ir\theta_p})
\big)
\frac{1}{\pi}\big(\frac{e^{2i\theta_p}-2+e^{-2i\theta_p}}{-4}\big)
d\theta_p
\nonumber\\
=&
\int_{\mathcal{T}}
\Psi\big(\mathscr{G}_{\sigma, p}(t_p^r)\big)
\frac{1}{\pi}\big(\frac{t_p^2-2+t_p^{-2}}{-4}\big)\frac{dt_p}{it_p}
\nonumber\\
=&
\int_{\mathcal{T}}
\Psi\big(\mathscr{G}_{\sigma, p} (t_p^r)\big)
\frac{t_p^2-2+t_p^{-2}}{-2} d^Ht_p.
\end{align}
Moreover, choosing $\Psi\equiv 1$ in \eqref{haar_p}, 
We see that
\[
\int_{\mathbb{R}}
\mathcal{M}_{\sigma, \{p\}}(\mathrm{Sym}^0, u) \frac{du}{\sqrt{2\pi}}
=\int_{\mathcal{T}}\frac{t_p^2-2+t_p^{-2}}{-2} d^Ht_p =1.
\]
Therefore all the assertions of the proposition in the case $|\mathcal{P}_q|=1$
are now established.

In the case $\infty > |\mathcal{P}_q|>1$, 
we construct the function
${\mathcal{M}}_{\sigma, \mathcal{P}_q}(\mathrm{Sym}^0, u)$
by the convolution product of
${\mathcal{M}}_{\sigma, \mathcal{P}'_q}(\mathrm{Sym}^0, u)$
and ${\mathcal{M}}_{\sigma, p}(\mathrm{Sym}^0, u)$
for $\mathcal{P}_q=\mathcal{P}'_q\cup\{p\} \subset\mathbb{P}_q$ inductively,
that is
\[
\mathcal{M}_{\sigma, \mathcal{P}_q}(\mathrm{Sym}^0, u)
=
\int_{\mathbb{R}}
\mathcal{M}_{\sigma, \mathcal{P}'_q}(\mathrm{Sym}^0, u')
\mathcal{M}_{\sigma, p}(\mathrm{Sym}^0, u-u')
\frac{du'}{\sqrt{2\pi}}.
\]
It is easy to show that 
this function satisfies the statements of Proposition~\ref{M_P}.
\end{proof}
%
%
\par
Next, for the purpose of considering 
$\displaystyle \lim_{|\mathcal{P}_q|\to \infty} \mathcal{M}_{\sigma, \mathcal{P}_q}(\mathrm{Sym}^0, u)$, 
we consider the Fourier transform of
$\mathcal{M}_{\sigma, \mathcal{P}_q}(\mathrm{Sym}^0, u)$ for $\sigma >1/2$. 
When $\mathcal{P}_q=\{p\}$, we define
$\widetilde{\mathcal{M}}_{\sigma, \mathcal{P}_q}(\mathrm{Sym}^0, u)=\widetilde{\mathcal{M}}_{\sigma, p}(\mathrm{Sym}^0, u)$
by
\[
\widetilde{\mathcal{M}}_{\sigma, p}(\mathrm{Sym}^0, x)
=
\int_{\mathbb{R}} \mathcal{M}_{\sigma, p}(\mathrm{Sym}^0, u) \psi_x(u)
\frac{du}{\sqrt{2\pi}},
\]
where $\psi_x(u)=e^{ixu}$ for $x\in\mathbb{R}$.
%
From \eqref{st_p0},
we see that
\begin{align}\label{widetilde-M_p}
\widetilde{\mathcal{M}}_{\sigma, p}(\textrm{Sym}^0, x)
=
\frac{2}{\pi} \int_0^{\pi}
e^{ixF(\theta)}
\sin^2(\theta/r)d\theta,
\end{align}
where
$
F(\theta)=-\log(1-2p^{-\sigma}\cos\theta+p^{-2\sigma})-\log(1-\delta_{r,\text{even}}p^{-\sigma}).
$
\begin{lem}\label{lemma-M-est}
For $r=1,2$, the following two estimates hold:
\begin{align}\label{M-est-1}
|\widetilde{\mathcal{M}}_{\sigma, p}(\mathrm{Sym}^0, x)|\leq 1
\end{align}
for any prime $p$, and there exists a large $p_0$ such that
\begin{align}\label{M-est-2}
\widetilde{\mathcal{M}}_{\sigma, p}(\mathrm{Sym}^0, x)\ll
\frac{p^{\sigma}}{\sqrt{1+|x|}}
\end{align}
for any $p>p_0$.
\end{lem}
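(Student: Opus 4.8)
## Proof plan for Lemma \ref{lemma-M-est}

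The plan is to work directly with the integral representation \eqref{widetilde-M_p}, namely
\[
\widetilde{\mathcal{M}}_{\sigma, p}(\mathrm{Sym}^0, x)
=
\frac{2}{\pi} \int_0^{\pi} e^{ixF(\theta)} \sin^2(\theta/r)\, d\theta,
\qquad
F(\theta)=-\log(1-2p^{-\sigma}\cos\theta+p^{-2\sigma})-\log(1-\delta_{r,\text{even}}p^{-\sigma}).
\]
The bound \eqref{M-est-1} is immediate: since $|e^{ixF(\theta)}|=1$ and $\frac{2}{\pi}\int_0^\pi \sin^2(\theta/r)\,d\theta = 1$ (this is exactly \eqref{prop-1-2} specialized to $\mathcal{P}_q=\{p\}$, or a direct computation for $r=1,2$), the triangle inequality gives $|\widetilde{\mathcal{M}}_{\sigma, p}(\mathrm{Sym}^0, x)|\leq 1$. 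No largeness of $p$ is needed here.

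For \eqref{M-est-2}, the idea is a stationary-phase / van der Corput style estimate. First I would note that for $|x|\leq 1$ the claimed bound follows already from \eqref{M-est-1} (with a suitable constant), so we may assume $|x|$ is large. The phase $F(\theta)$ has derivative
\[
F'(\theta) = -\frac{2p^{-\sigma}\sin\theta}{1-2p^{-\sigma}\cos\theta+p^{-2\sigma}} = -\frac{2p^{-\sigma}\sin\theta}{|1-e^{i\theta}p^{-\sigma}|^2},
\]
which vanishes only at $\theta=0$ and $\theta=\pi$; near those endpoints $F'(\theta)$ is comparable to $\pm 2p^{-\sigma}\theta$ (resp. $\mp 2p^{-\sigma}(\pi-\theta)$) up to the bounded factor $|1-e^{i\theta}p^{-\sigma}|^{-2}$, which for $p>p_0$ lies in a fixed compact subinterval of $(0,\infty)$. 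Thus $|F'(\theta)|\asymp p^{-\sigma}\cdot \operatorname{dist}(\theta,\{0,\pi\})$ and, crucially, the second derivative satisfies $|F''(\theta)|\asymp p^{-\sigma}$ uniformly on $[0,\pi]$ for $p>p_0$ (one checks $F''(0)$ and $F''(\pi)$ are of exact order $p^{-\sigma}$ and $F''$ stays bounded away from $0$ throughout once $p$ is large, since $p^{-\sigma}$ is small). The amplitude $\sin^2(\theta/r)$ is bounded and, for $r=2$, also vanishes at $\theta=0$; I would split the integral into a neighborhood of the critical points and the complement, integrate by parts once on the complement (gaining $|x F'|^{-1}$), and on the critical neighborhoods use the second-derivative test (van der Corput's lemma): an integral $\int e^{ix F(\theta)}g(\theta)\,d\theta$ over an interval where $|F''|\gtrsim p^{-\sigma}$ is $\ll (|x|p^{-\sigma})^{-1/2}\|g\|_\infty + (\text{variation terms})$. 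Collecting, the main term is of order $(|x|p^{-\sigma})^{-1/2} = p^{\sigma/2}|x|^{-1/2}$, which is $\ll p^\sigma(1+|x|)^{-1/2}$; the boundary/variation contributions are smaller by a further power of $|x|p^{-\sigma}$ once $p>p_0$.

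The main obstacle is making the second-derivative test uniform in $p$ with the correct dependence: one must verify that $F''$ does not degenerate (i.e. $|F''(\theta)|\gg p^{-\sigma}$) anywhere on $[0,\pi]$, not just near the endpoints, and that the "variation of the amplitude divided by $F'$" terms arising from integration by parts are genuinely lower order. This is where the hypothesis $p>p_0$ is used: for $p$ large the factors $1\pm 2p^{-\sigma}\cos\theta + p^{-2\sigma}$ are uniformly close to $1$, so all the relevant quantities ($|1-e^{i\theta}p^{-\sigma}|^{\pm 2}$, the bracketed rational functions in $\cos\theta$ appearing in $F'$ and $F''$) are pinned to fixed-order constants, and the estimate $|F''|\asymp p^{-\sigma}$ becomes a clean consequence. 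The case $r=2$ requires a little extra care because the substitution effectively replaces $\sin^2\theta$ as amplitude, but since $\sin^2(\theta/r)$ is still a smooth bounded function with bounded derivative on $[0,\pi]$, the same van der Corput bound applies verbatim.
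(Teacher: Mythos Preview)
Your proof of \eqref{M-est-1} is correct and matches the paper's.

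For \eqref{M-est-2}, your overall plan (split near the stationary points of $F$, apply the second-derivative test there, and integrate by parts on the complement) is sound and close in spirit to what the paper does. However, one specific claim you make is false and should be corrected: you assert that for $p$ large, $|F''(\theta)|\asymp p^{-\sigma}$ uniformly on all of $[0,\pi]$. A direct computation gives
\[
F''(\theta)=-\frac{2p^{-\sigma}\bigl((1+p^{-2\sigma})\cos\theta-2p^{-\sigma}\bigr)}{(1-2p^{-\sigma}\cos\theta+p^{-2\sigma})^{2}},
\]
so for large $p$ one has $F''(\theta)\approx -2p^{-\sigma}\cos\theta$, which vanishes near $\theta=\pi/2$ (precisely, $F''$ has a unique zero at $\cos\theta=2p^{-\sigma}/(1+p^{-2\sigma})$). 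Thus $|F''|$ is \emph{not} bounded below by a multiple of $p^{-\sigma}$ on the whole interval, and the ``bracketed rational function in $\cos\theta$'' appearing in $F''$ is not pinned to a fixed-order constant as you suggest. This does not actually break your argument, because you only need the lower bound $|F''|\gg p^{-\sigma}$ on fixed neighbourhoods of $\theta=0$ and $\theta=\pi$ (where $\cos\theta$ is bounded away from $0$), while on the complement you are using the first-derivative test, for which $|F'|\gg p^{-\sigma}\sin\theta\gg p^{-\sigma}$ suffices; the single interior zero of $F''$ merely forces you to cut that middle interval into two monotonicity pieces before applying Titchmarsh's Lemma~4.3. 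So the fix is simply to drop the global claim on $F''$ and keep the split you already described.

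By way of comparison, the paper handles the two values of $r$ separately. For $r=1$ it exploits the amplitude $\sin^{2}\theta$: since $F'$ itself carries a factor $\sin\theta$, one writes $e^{ixF}=(ixF')^{-1}(e^{ixF})'$ and a single integration by parts gives $O(p^{\sigma}|x|^{-1})$ directly, stronger than needed. For $r=2$ the paper writes $\sin^{2}(\theta/2)=\tfrac{1}{2}(1-\cos\theta)$, treats the constant-amplitude piece $I_{1}=\int_{0}^{\pi}e^{ixF}\,d\theta$ by quoting the Jessen--Wintner inequality (a ready-made $O(p^{\sigma/2}|x|^{-1/2})$ stationary-phase bound), and for the $\cos\theta$-piece $I_{2}$ uses an $|x|$-dependent split, excising intervals of length $|x|^{-1/2}$ at each endpoint (bounded trivially) and applying the first-derivative test on the interior. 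Your unified van der Corput approach is a legitimate alternative that treats both $r$ at once; the paper's route avoids checking $F''$ near the endpoints by invoking Jessen--Wintner as a black box, and for $r=1$ gets a sharper bound with less work.
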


\begin{proof}
The first inequality is easy, because
\begin{align}\label{trivial_p}
|\widetilde{\mathcal{M}}_{\sigma, p}(\mathrm{Sym}^0, x)|
\leq&
\frac{2}{\pi }
\int_0^{\pi}
\sin^2(\theta/r)
d\theta
\nonumber\\
=&
  \frac{2}{\pi}
  \int_0^{\pi}
  \frac{1-\cos(2\theta/r)}{2}
d\theta
=1,
\end{align}
for $r=1, 2$.

To prove the second inequality, we may assume $|x|\geq 1$, because if $|x|<1$,
then \eqref{M-est-2} follows immediately from \eqref{M-est-1}.
We first observe
\begin{align*}
&
\widetilde{\mathcal{M}}_{\sigma, p}(\mathrm{Sym}^0, x)
=
\frac{2}{\pi}\int_0^{\pi} e^{ixF(\theta)}
\sin^2(\theta/r)
d\theta
\\
=&
\begin{cases}
\displaystyle
\frac{2}{\pi}\int_0^{\pi} (e^{ixF(\theta)})'
\frac{(1-2p^{-\sigma}\cos\theta +p^{-2\sigma})}{-2ix p^{-\sigma}}\sin\theta
d\theta & r=1 \\
\displaystyle
\frac{1}{\pi}\int_{0}^{\pi} e^{ixF(\theta)}(1-\cos\theta)d\theta
& r=2,
\end{cases}
\end{align*}
because
\begin{align*}
F^{\prime}(\theta)=\frac{-2p^{-\sigma}\sin\theta}
{1-2p^{-\sigma}\cos\theta+p^{-2\sigma}}.
\end{align*}
When $r=1$, the above is equal to
$$
\frac{2}{\pi}\int_0^{\pi} e^{ixF(\theta)}
\frac{(\cos\theta-2p^{-\sigma}\cos(2\theta) +p^{-2\sigma}\cos\theta)}{+2ix p^{-\sigma}}
d\theta
$$
by integration by parts, and hence $O(p^{\sigma}|x|^{-1})$ which is sufficient.
When $r=2$, the above is
$$
=\frac{1}{\pi}\int_0^{\pi}e^{ixF(\theta)}d\theta-
\frac{1}{\pi}\int_0^{\pi}e^{ixF(\theta)}\cos\theta d\theta
=\frac{1}{\pi}I_1(x)-\frac{1}{\pi}I_2(x),
$$
say.
The estimate
\begin{align}\label{est-I-1}
I_1(x)\ll p^{\sigma/2}|x|^{-1/2}
\end{align}
follows from the Jessen-Wintner inequality (see \cite[Theorem 13]{jw} 
or \cite[Proposition 7.1]{mu-JNT})
for large $p$.
To evaluate $I_2(x)$, we divide
\begin{align*}
  I_2(x)
  &=
\frac{2}{\pi}\int_{\substack{0\leq \theta \leq \sqrt{|x|}^{-1}\\\pi-\sqrt{|x|}^{-1}\leq \theta \leq \pi}} e^{ixF(\theta)}\cos\theta d\theta
  +
  \frac{2}{\pi}\int_{\sqrt{|x|}^{-1}}^{\pi-\sqrt{|x|}^{-1}} e^{ixF(\theta)}\cos\theta d\theta\\
&= I_{21}(x)+I_{22}(x),
\end{align*}
say.
The inequalties
\begin{align*}
\bigg|\frac{xF'(\theta)}{\cos\theta}\bigg|
=
\bigg|\frac{x2p^{-\sigma}\tan\theta}{1-2p^{-\sigma}\cos\theta +p^{-2\sigma}}\bigg|
\geq
\frac{|x|2p^{-\sigma}\theta}{1+2p^{-\sigma}+p^{-2\sigma}}
\\
\bigg|\frac{xF'(\pi-\theta)}{\cos\theta}\bigg|
=
\bigg|\frac{-x2p^{-\sigma}\tan\theta}{1+2p^{-\sigma}\cos\theta +p^{-2\sigma}}\bigg|
\geq
\frac{|x|2p^{-\sigma}\theta}{1+2p^{-\sigma}+p^{-2\sigma}}
\end{align*}
for $0\leq \theta\leq \pi/2$ yields
\begin{align*}
I_{22}(x)
=&
\frac{2}{\pi}\int_{\sqrt{|x|}^{-1}}^{\pi/2} e^{ixF(\theta)}\cos\theta d\theta
+
\frac{2}{\pi}\int_{\pi/2}^{\pi-\sqrt{|x|}^{-1}} e^{ixF(\theta)}\cos\theta d\theta
\\
=&
\frac{2}{\pi}\int_{\sqrt{|x|}^{-1}}^{\pi/2} e^{ixF(\theta)}\cos\theta d\theta
-
\frac{2}{\pi}\int_{\sqrt{|x|}^{-1}}^{\pi/2} e^{ixF(\pi-\theta)}\cos\theta d\theta
\ll
\frac{p^{\sigma}}{\sqrt{|x|}}
\end{align*}
for large $p$ by the first derivative test (Titchmarsh \cite[Lemma 4.3]{tit}), 
while trivially
\[
|I_{21}(x)|
\leq
\frac{2}{\pi}\int_{\substack{0\leq \theta \leq \sqrt{|x|}^{-1}\\\pi-\sqrt{|x|}^{-1}\leq \theta \leq \pi}} d\theta
\ll
\frac{1}{\sqrt{|x|}}.
\]
Therefore 
\begin{align}\label{est-I-2}
I_2(x)\ll p^{\sigma}|x|^{-1/2}.
\end{align}
The desired estimate follows from \eqref{est-I-1} and \eqref{est-I-2}.
\end{proof}

Now we define $\widetilde{\mathcal{M}}_{\sigma, \mathcal{P}_q}(\mathrm{Sym}^0, x)$
by
\begin{align}\label{FFourier-transf}
\widetilde{\mathcal{M}}_{\sigma, \mathcal{P}_q}(\mathrm{Sym}^0, x)
=
\prod_{p \in \mathcal{P}_q}
\widetilde{\mathcal{M}}_{\sigma, p}(\mathrm{Sym}^0, x).
\end{align}
The estimate \eqref{M-est-1} gives
\begin{equation}\label{trivial_P}
\left|\widetilde{\mathcal{M}}_{\sigma, \mathcal{P}_q}(\mathrm{Sym}^0, x)\right|
\leq
\prod_{p\in\mathcal{P}_q^0}
\frac{2}{\pi}\int_0^{\pi}\sin^2\theta_p d\theta_p
=1.
\end{equation}
for any finite $\mathcal{P}_q$.
On the other hand, let $n\geq 3$, and let
$\mathcal{P}_q^{0,n}\subset\mathbb{P}_q$ be a fixed finite set, 
$|\mathcal{P}_q^{0,n}|=n$, and of which 
all elements are $>p_0$.
If $\mathcal{P}_q\supset\mathcal{P}_q^{0,n}$, then
we apply \eqref{M-est-2} to all $p\in\mathcal{P}_q^{0,n}$ and
apply \eqref{M-est-1} to all the other members of $\mathcal{P}_q$ to obtain
\begin{align}\label{combined}
\widetilde{\mathcal{M}}_{\sigma, \mathcal{P}_q}(\mathrm{Sym}^0, x)
=O_n\left(\frac{\prod_{p\in\mathcal{P}_q^{0,n}}p^{\sigma}}
{(1+|x|)^{n/2}}\right).
\end{align}
In what follows we mainly use the case $n=3$, so we write
$\mathcal{P}_q^{0,3}=\mathcal{P}_q^0$.
Denote the largest element of $\mathcal{P}_q^0$ by $y_0$.

%
%
%
\par
The next lemma is an analogue of (a), (b) and (c) given in
\cite[Section 3, pp. 645--646]{im_2011}.

\begin{lem}\label{lem-abc}
The following assertions hold for any $\sigma >1/2$.
\begin{itemize}
\item[{\rm(a)}.]
  Let $\mathcal{P}_q \subset \mathbb{P}_q$ be a finite subset
  with $\mathcal{P}_q\supset\mathcal{P}_q^0$.
  Then  
  $\widetilde{\mathcal{M}}_{\sigma, \mathcal{P}_q}(\mathrm{Sym}^0, x) \in L^t$ $(t\in [1,\infty])$.
\item[{\rm(b)}.]
  For any subsets $\mathcal{P}''_q$ and $\mathcal{P}'_q$ of $\mathbb{P}_q$
  with $\mathcal{P}''_q\subset \mathcal{P}'_q$, we can see
  \[
  \left|\widetilde{\mathcal{M}}_{\sigma, \mathcal{P}'_q}(\mathrm{Sym}^0, x)\right|
  \leq
  \left|\widetilde{\mathcal{M}}_{\sigma, \mathcal{P}''_q}(\mathrm{Sym}^0, x)\right|.
  \]
\item[{\rm(c)}.]
  Let $y>y_0$,
  and put $\mathcal{P}_q(y)=\{ p \in \mathbb{P}_q \mid  p \leq y \} $.
  {\rm(}Hence $\mathcal{P}_q^0\subset\mathcal{P}_q(y)\subset \mathbb{P}_q$.{\rm)}
  The limit
  $\displaystyle \lim_{y\to\infty} \widetilde{\mathcal{M}}_{\sigma, \mathcal{P}_q(y)}(\mathrm{Sym}^0, x)$ exists, which we 
  denote by $\widetilde{\mathcal{M}}_{\sigma}(\mathrm{Sym}^0, x)$.
  For any $a >0$, this convergence is uniform in $|x|\leq a$.
  The infinite product expression 
  $\widetilde{\mathcal{M}}_{\sigma}(\mathrm{Sym}^0, x)=\prod_{p\in\mathbb{P}_q}\widetilde{\mathcal{M}}_{\sigma, p}(\mathrm{Sym}^0, x)$
  holds, which is absolutely convergent.
\end{itemize}
\end{lem}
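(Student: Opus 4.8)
The plan is to dispose of (a) and (b) directly from the estimates already established, and to devote the real effort to (c), whose only substantial ingredient is a convergence bound for $1-\widetilde{\mathcal{M}}_{\sigma,p}(\mathrm{Sym}^0,x)$.

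For (a), I would apply \eqref{combined} with $n=3$: since $\mathcal{P}_q\supset\mathcal{P}_q^0$ we get $\widetilde{\mathcal{M}}_{\sigma,\mathcal{P}_q}(\mathrm{Sym}^0,x)\ll(1+|x|)^{-3/2}$, with an implied constant that does not depend on $\mathcal{P}_q$ (only on $\mathcal{P}_q^0$ and $\sigma$). Since $\int_{\mathbb{R}}(1+|x|)^{-3t/2}\,dx<\infty$ for every $t\geq 1$, this puts $\widetilde{\mathcal{M}}_{\sigma,\mathcal{P}_q}(\mathrm{Sym}^0,x)$ in $L^t(\mathbb{R})$ for $t\in[1,\infty)$, while \eqref{trivial_P} gives the case $t=\infty$. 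For (b), the product formula \eqref{FFourier-transf} together with the pointwise bound $|\widetilde{\mathcal{M}}_{\sigma,p}(\mathrm{Sym}^0,x)|\leq 1$ of \eqref{M-est-1} gives
\[
\bigl|\widetilde{\mathcal{M}}_{\sigma,\mathcal{P}'_q}(\mathrm{Sym}^0,x)\bigr|
=\bigl|\widetilde{\mathcal{M}}_{\sigma,\mathcal{P}''_q}(\mathrm{Sym}^0,x)\bigr|
\prod_{p\in\mathcal{P}'_q\setminus\mathcal{P}''_q}\bigl|\widetilde{\mathcal{M}}_{\sigma,p}(\mathrm{Sym}^0,x)\bigr|
\leq\bigl|\widetilde{\mathcal{M}}_{\sigma,\mathcal{P}''_q}(\mathrm{Sym}^0,x)\bigr|,
\]
which is the asserted monotonicity.

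For (c) the strategy is to show that $\sum_{p\in\mathbb{P}_q}\bigl|1-\widetilde{\mathcal{M}}_{\sigma,p}(\mathrm{Sym}^0,x)\bigr|$ converges, uniformly for $|x|\leq a$; standard facts about infinite products then yield the existence of $\lim_{y\to\infty}\widetilde{\mathcal{M}}_{\sigma,\mathcal{P}_q(y)}(\mathrm{Sym}^0,x)=\prod_{p\in\mathbb{P}_q}\widetilde{\mathcal{M}}_{\sigma,p}(\mathrm{Sym}^0,x)$, its uniformity on $|x|\leq a$, and the absolute convergence of the product. Starting from \eqref{widetilde-M_p} and the normalization $\tfrac{2}{\pi}\int_0^{\pi}\sin^2(\theta/r)\,d\theta=1$, one writes
\[
1-\widetilde{\mathcal{M}}_{\sigma,p}(\mathrm{Sym}^0,x)=\frac{2}{\pi}\int_0^{\pi}\bigl(1-e^{ixF(\theta)}\bigr)\sin^2(\theta/r)\,d\theta,
\]
and applies $1-e^{iy}=-iy+O(y^2)$ for real $y$. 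A crude bound $|1-e^{ixF(\theta)}|\ll_a|F(\theta)|\ll p^{-\sigma}$ would only give $\sum_p p^{-\sigma}$, divergent for $\sigma\leq 1$, so one must extract the linear term: from $F(\theta)=(2\cos\theta+\delta_{r,\text{even}})p^{-\sigma}+O(p^{-2\sigma})$, uniformly in $\theta\in[0,\pi]$, the term linear in $x$ is $-\tfrac{2ix}{\pi}\int_0^{\pi}F(\theta)\sin^2(\theta/r)\,d\theta$, and an elementary computation shows $\int_0^{\pi}(2\cos\theta+\delta_{r,\text{even}})\sin^2(\theta/r)\,d\theta=0$ for both $r=1$ and $r=2$. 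Hence the linear part is $O(|x|p^{-2\sigma})$ and the quadratic remainder is trivially $O(x^2p^{-2\sigma})$, so $\bigl|1-\widetilde{\mathcal{M}}_{\sigma,p}(\mathrm{Sym}^0,x)\bigr|\ll_a p^{-2\sigma}$ for $|x|\leq a$, and $\sum_p p^{-2\sigma}<\infty$ because $\sigma>1/2$.

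The main obstacle is precisely this first-order cancellation: one has to expand $F(\theta)$ to accuracy $p^{-2\sigma}$ and check that the weight $\sin^2(\theta/r)$ kills the leading term $2\cos\theta$ — and, in the even case $r=2$, also the constant $1=\delta_{r,\text{even}}$, the vanishing there resting on $\int_0^\pi\cos\theta\,d\theta=0$ and $\int_0^\pi\cos^2\theta\,d\theta=\tfrac{\pi}{2}$. Everything after that is bookkeeping: the passage from $p^{-\sigma}$ to $p^{-2\sigma}$ is exactly what makes the series converge for all $\sigma>1/2$ rather than only $\sigma>1$; uniformity for $|x|\leq a$ is automatic since all implied constants depend only on $a$ and $\sigma$; and since $\mathcal{P}_q(y)\supset\mathcal{P}_q^0$ for $y>y_0$ and the series converges absolutely over $\mathbb{P}_q$, the finite products $\prod_{p\in\mathcal{P}_q(y)}\widetilde{\mathcal{M}}_{\sigma,p}(\mathrm{Sym}^0,x)$ converge to $\prod_{p\in\mathbb{P}_q}\widetilde{\mathcal{M}}_{\sigma,p}(\mathrm{Sym}^0,x)$, which we denote $\widetilde{\mathcal{M}}_{\sigma}(\mathrm{Sym}^0,x)$, completing (c).
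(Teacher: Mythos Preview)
Your argument is correct and follows the same overall strategy as the paper: parts (a) and (b) are handled identically (via \eqref{combined} with $n=3$ and \eqref{trivial_P}, respectively via \eqref{M-est-1}), and for (c) both you and the paper Taylor-expand the exponential and show that the linear term contributes $O((|x|+|x|^2)p^{-2\sigma})$ rather than $O(|x|p^{-\sigma})$, so that $\sum_p|1-\widetilde{\mathcal{M}}_{\sigma,p}(\mathrm{Sym}^0,x)|<\infty$ for $\sigma>1/2$ (this is exactly the paper's \eqref{1+error}).

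The one point where your route diverges from the paper's is in how this cancellation is verified. The paper works with the representation \eqref{st_p} (weight $\sin^2\theta$, argument $\mathscr{G}_{\sigma,p}(e^{ir\theta})$) and proves \eqref{wp_0} by expanding the logarithms into full power series and integrating term by term, emphasising that for $r=2$ the $p^{-\sigma}$ contributions cancel in \eqref{kokogadaiji}. You instead stay with the representation \eqref{widetilde-M_p} (weight $\sin^2(\theta/r)$, argument $F(\theta)$), expand only $F(\theta)=(2\cos\theta+\delta_{r,\text{even}})p^{-\sigma}+O(p^{-2\sigma})$, and check directly that $\int_0^\pi(2\cos\theta+\delta_{r,\text{even}})\sin^2(\theta/r)\,d\theta=0$ for $r=1,2$. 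Your computation is a shade more elementary since it avoids the full series manipulation; the paper's gives the exact value of the linear integral (e.g.\ $1/(2p^{2\sigma})$ for $r=1$), which is not needed here but makes the mechanism of cancellation more transparent. Either way the conclusion \eqref{1+error} and the rest of (c) follow identically.
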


\begin{proof}
First prove (a):
By \eqref{trivial_P} and \eqref{combined} (with $n=3$),
we find
\begin{align}\label{lem2-a-pr}
\int_{\mathbb{R}}|\widetilde{\mathcal{M}}_{\sigma, \mathcal{P}_q}(\textrm{Sym}^0, x)|^t|dx|
\ll 
\int_{|x|< 1} |dx|
+
\int_{|x|\geq 1}
\frac{\prod_{p\in\mathcal{P}_q^0}p^{t\sigma}}{|x|^{3t/2}} |dx|
< 
\infty
\end{align}
for $t\in [1, \infty)$.
Note that the implied constant here depends only on $\sigma,t$, and $\mathcal{P}_q^0$.
Also from \eqref{trivial_P},
we see that
$\sup_x |\widetilde{\mathcal{M}}_{\sigma, \mathcal{P}_q}(\mathrm{Sym}^0, x)| < \infty $.
The assertion (a) is proved.
The property (b) is easy by \eqref{trivial_p}.

We proceed to the proof of (c).
For any integers $y_1$ and $y_2$ satisfying $y_2\geq y_1>y_0$,
let $p(i)$ $(1\leq i \leq d)$ be all prime numbers
in $(y_1, y_2]$ with
\[
y_2\geq p(d) >p(d-1) > \ldots > p(1) > y_1.
\]
Set the following subsets of $\mathbb{P}_q$:
\begin{align*}
\mathcal{P}_q(y_1, 0)=&\mathcal{P}_q(y_1),
\\
\mathcal{P}_q(y_1, 1)=&\mathcal{P}_q(y_1, 0)\cup \{p(1)\},
\\
\vdots &
\\
\mathcal{P}_q(y_1, d)=&\mathcal{P}_q(y_1, d-1)\cup \{p(d)\}.
\end{align*}
%
First we note that
\begin{align}\label{wp_00}
\mathscr{G}_{\sigma, p}(e^{i\theta})
&=
-\log(1-e^{ir\theta}p^{-\sigma})-\log(1-e^{-ir\theta}p^{-\sigma})
-\log(1-\delta_{r,\text{even}}p^{-\sigma})\notag\\
&\ll
\frac{1}{p^{\sigma}}.
\end{align}
Next we show that
\begin{align}\label{wp_0}
-\frac{2}{\pi}\int_0^{\pi}
\mathscr{G}_{\sigma,p}(e^{ir\theta})\sin^2\theta
d\theta
\ll 
\frac{1}{p^{2\sigma}}.
\end{align}
In fact, the. left-hand side is
\begin{align}
=&
-\frac{2}{\pi}\int_0^{\pi}
\Big(-\log(1-2p^{-\sigma}\cos(r\theta) +p^{-2\sigma})
-\log(1-\delta_{r,\text{even}}p^{-\sigma})
\Big)
\sin^2{\theta}
d\theta
\nonumber\\
=&
-\frac{1}{\pi}\int_0^{2\pi}
\Big(-\log(1-2p^{-\sigma}\cos(r\theta) +p^{-2\sigma})
-\log(1-\delta_{r,\text{even}}p^{-\sigma})
\Big)
\sin^2{\theta}
d\theta
\nonumber\\
=&
-\frac{1}{\pi}
\int_0^{2\pi}
(-\log(1-e^{ir\theta}p^{-\sigma})-\log(1-e^{-ir\theta}p^{-\sigma})
-\log(1-\delta_{r,\text{even}}p^{-\sigma}))
\nonumber\\
&\times
\bigg(\frac{e^{2i\theta}-2+e^{-2i\theta}}{-4}\bigg)
d\theta
\nonumber\\
=&
\frac{1}{\pi}
\int_0^{2\pi}
\bigg(
\sum_{h=1}^{\infty}\frac{e^{hir\theta}}{hp^{h\sigma}}
+\sum_{k=1}^{\infty}\frac{e^{-k ir\theta}}{k p^{k\sigma}}
+\sum_{\ell=1}^{\infty}\frac{\delta_{r,\text{even}}}{\ell p^{\ell\sigma}}
\bigg)
\bigg(\frac{e^{2i\theta}-2+e^{-2i\theta}}{+4}\bigg)
d\theta.
\nonumber
\end{align}
We compute the right-hand side of the above termwisely.
When $r=1$, only the contributions of the terms corresponding to $h=2$ and $k=2$
remain, and the right-hand side is
$$
=\frac{1}{4\pi}\int_0^{2\pi}\left(\frac{1}{2p^{2\sigma}}+\frac{1}{2p^{2\sigma}}
\right) d\theta=\frac{1}{2p^{2\sigma}}.
$$
When $r=2$, the remaining terms are those corresponding to $h=1$, $k=1$ and
$\ell=1,2,\ldots$, and hence the right-hand side is
\begin{align}\label{kokogadaiji}
=\frac{1}{4\pi}\int_0^{2\pi}
\Big(\frac{1}{p^{\sigma}}+\frac{1}{p^{\sigma}}
-2\sum_{\ell=1}^{\infty}\frac{1}{\ell p^{\ell\sigma}}
\Big)
d\theta
=-\sum_{\ell=2}^{\infty}\frac{1}{\ell p^{\ell\sigma}}
\ll \frac{1}{p^{2\sigma}}.
\end{align}
Therefore we obtain \eqref{wp_0}.
(Here, it is an important point in the proof that the terms of the form
$1/p^{\sigma}$ are finally cancelled with each other in the computations
\eqref{kokogadaiji}.) 

Now we evaluate the following difference:
\begin{align}\label{wp_1}
&
\left|
\widetilde{\mathcal{M}}_{\sigma, \mathcal{P}_q(y_1, d)}(\mathrm{Sym}^0, x)
-
\widetilde{\mathcal{M}}_{\sigma, \mathcal{P}_q(y_1, 0)}(\mathrm{Sym}^0, x)
\right|
\nonumber\\
\leq &
\sum_{i=1}^{d}
\left|
\widetilde{\mathcal{M}}_{\sigma, \mathcal{P}_q(y_1, i)}(\mathrm{Sym}^0, x)
-
\widetilde{\mathcal{M}}_{\sigma, \mathcal{P}_q(y_1, i-1)}(\mathrm{Sym}^0, x)
\right|
\nonumber\\
\leq&
\sum_{i=1}^{d}
\left|
\widetilde{\mathcal{M}}_{\sigma, \mathcal{P}_q(y_1, i-1)}(\mathrm{Sym}^0, x)
\right|
\left|
\widetilde{\mathcal{M}}_{\sigma, p(i)}(\mathrm{Sym}^0, x)
-1
\right|
\nonumber\\
\leq&
\sum_{i=1}^{d}
\left|
\widetilde{\mathcal{M}}_{\sigma, p(i)}(\mathrm{Sym}^0, x)
-1
\right|,
\end{align}
where on the last inequality we used \eqref{M-est-1}.
Here we show
\begin{align}\label{1+error}
\widetilde{\mathcal{M}_{\sigma, p}}(\mathrm{Sym}^0, x)
=1+O\bigg(\frac{|x|+|x|^2}{p^{2\sigma}}\bigg).
\end{align}
In fact, applying \eqref{st_p} (with $\Psi=\psi_x$), we have
\begin{align*}
&\widetilde{\mathcal{M}_{\sigma, p}}(\mathrm{Sym}^0, x)
=\frac{2}{\pi}\int_0^{\pi}
\psi_x\Big(\mathscr{G}_{\sigma, p}(e^{ir\theta})
\Big)\sin^2\theta d\theta\\
&=\frac{2}{\pi}\int_0^{\pi}
(1+ix\mathscr{G}_{\sigma, p}(e^{ir\theta})
+O(|x|^2|\mathscr{G}_{\sigma, p}(e^{ir\theta})|^2)
)\sin^2\theta d\theta,
\end{align*}
and hence, applying \eqref{wp_00}, \eqref{wp_0} and the fact
$$
\frac{2}{\pi}\int_0^{\pi}\sin^2\theta d\theta=1,
$$
we obtain \eqref{1+error}.
From \eqref{1+error} we find that the right-hand side of \eqref{wp_1} is
\begin{align}
\ll&
\sum_{i=1}^{d}\frac{|x|+|x|^2}{p(i)^{2\sigma}}.
\end{align}
Therefore, for any $\varepsilon>0$,
there exists large number $y=y(x,\sigma,\varepsilon)$ such that
\[
\left|
\widetilde{\mathcal{M}}_{\sigma, \mathcal{P}_q(y_2)}(\mathrm{Sym}^0, x)
-
\widetilde{\mathcal{M}}_{\sigma, \mathcal{P}_q(y_1)}(\mathrm{Sym}^0, x)
\right|
<\varepsilon
\qquad
(y_2 >y_1 > y),
\]
for any $\sigma>1/2$.
This implies that there exists the limit
$$
\lim_{y\to\infty} \widetilde{\mathcal{M}}_{\sigma, \mathcal{P}_q(y)}(\mathrm{Sym}^0, x)
=\lim_{y\to\infty} \prod_{p\in \mathcal{P}_q(y)}\widetilde{\mathcal{M}}_{\sigma, p}
(\mathrm{Sym}^0, x)
$$
and its convergence is uniform in $|x|\leq a$ for any real number $a\in\mathbb{R}$.
Moreover, because of \eqref{1+error}, it follows
that the infinite product
$\prod_{\wp\in\mathbb{P}} \widetilde{\mathcal{M}_{\sigma, \wp}}(\mathrm{Sym}^0, x)$ is absolutely convergent
and it is uniform in $|x|\leq a$ for any $\sigma > 1/2$.
\end{proof}

\begin{rem}\label{M-p-conti}
The Fourier inverse transform gives
\begin{align}\label{p-Fourierinverse}
\int_{\mathbb{R}}
\widetilde{\mathcal{M}}_{\sigma, \mathcal{P}_q}(\mathrm{Sym}^0, x) \psi_{-u}(x)
\frac{dx}{\sqrt{2\pi}}
=
\mathcal{M}_{\sigma, \mathcal{P}_q}(\mathrm{Sym}^0, u).
\end{align}
The case $t=1$ of Lemma \ref{lem-abc} (a) implies that 
$\mathcal{M}_{\sigma, \mathcal{P}_q}(\mathrm{Sym}^0, u)$ is continuous
if $\mathcal{P}_q\supset\mathcal{P}_q^0$.
\end{rem}

\begin{rem}\label{M-p-supp}
We have shown in Proposition \ref{M_P} that the support of
$\mathcal{M}_{\sigma, \mathcal{P}_q}(\mathrm{Sym}^0, u)$, which we denote by
${\rm Supp}(\mathcal{M}_{\sigma, \mathcal{P}_q})$, is compact.   Here we show that
when $\mathcal{P}\supset\mathcal{P}_q^0$, then
${\rm Supp}(\mathcal{M}_{\sigma, \mathcal{P}_q})\subset C_{\sigma}(\mathcal{P}_q)$,
where $C_{\sigma}(\mathcal{P}_q)$ denotes the closure of the image of 
$\mathscr{G}_{\sigma, \mathcal{P}_q}$.
In fact, for any $x\notin C_{\sigma}(\mathcal{P}_q)$, we can choose a non-negative
continuous function $\Psi_x$ such that $\Psi_x(x)>0$ and
${\rm Supp}(\Psi_x)\cap C_{\sigma}(\mathcal{P}_q)=\emptyset$.
Then the right-hand side of \eqref{prop-1-1} with $\Psi=\Psi_x$ is equal to 0, and so is
the left-hand side.
However, if $x\in {\rm Supp}(\mathcal{M}_{\sigma, \mathcal{P}_q})$, then the
left-hand side of \eqref{prop-1-1} with $\Psi=\Psi_x$ is positive, because
$\mathcal{M}_{\sigma, \mathcal{P}_q}(\mathrm{Sym}^0, u)$ is non-negative and continuous by
Remark \ref{M-p-conti}.
Therefore $x\notin {\rm Supp}(\mathcal{M}_{\sigma, \mathcal{P}_q})$, and hence
the assertion.
\end{rem}
Lemma \ref{lem-abc} yields the next proposition which is the analogue
of \cite[Proposition~3.4]{im_2011} and \cite[Proposition~3.2]{mu} (whose idea goes back to
\cite{ihara}).     Since the proof is omitted in \cite{im_2011} and \cite{mu}, here we
describe the detailed proof.
%
%
\begin{prop}\label{tildeM}
For any $\sigma >1/2$, the convergence of the limit
\[
\widetilde{\mathcal{M}}_{\sigma}(\mathrm{Sym}^0, x)
=
\lim_{y\to\infty}
\widetilde{\mathcal{M}}_{\sigma, \mathcal{P}_q(y)}(\mathrm{Sym}^0, x)
\]
is uniform in $x\in \mathbb{R}$.
Furthermore,
this convergence is $L^t$-convergence and 
the function $\widetilde{\mathcal{M}}_{\sigma}(\mathrm{Sym}^0, x)$
belongs to $ L^t$ $(1\leq t \leq \infty)$.
\end{prop}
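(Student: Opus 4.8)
The plan is to bootstrap the locally uniform convergence of Lemma \ref{lem-abc}(c) into genuine uniform convergence on all of $\mathbb{R}$, by using the decay estimate \eqref{combined} to control the behaviour for large $|x|$, and then to deduce the $L^t$ statements from the resulting pointwise majorant together with dominated convergence. The one structural point to keep in mind throughout is that all the relevant implied constants should be expressed in terms of the \emph{fixed} auxiliary set $\mathcal{P}_q^0$, so that they are independent of the truncation parameter $y$; separating the ``large $|x|$'' tail uniformly in $y$ is really the only delicate issue, and it is already prepared by \eqref{combined} and by the monotonicity in Lemma \ref{lem-abc}(b).

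First I would prove uniform convergence on $\mathbb{R}$. Fix $\varepsilon>0$. Since $\mathcal{P}_q(y)\supset\mathcal{P}_q^0$ for every $y>y_0$, the estimate \eqref{combined} with $n=3$ gives $|\widetilde{\mathcal{M}}_{\sigma,\mathcal{P}_q(y)}(\mathrm{Sym}^0,x)|\ll_{\sigma,\mathcal{P}_q^0}(1+|x|)^{-3/2}$ with an implied constant not depending on $y$, so there is $a=a(\sigma,\varepsilon)>0$ with $|\widetilde{\mathcal{M}}_{\sigma,\mathcal{P}_q(y)}(\mathrm{Sym}^0,x)|<\varepsilon/2$ for all $|x|\ge a$ and all $y>y_0$; hence the difference of two such quantities is $<\varepsilon$ on $|x|\ge a$. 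On the compact interval $|x|\le a$, Lemma \ref{lem-abc}(c) already provides uniform convergence, so there is $y_1=y_1(\sigma,\varepsilon)\ge y_0$ with $|\widetilde{\mathcal{M}}_{\sigma,\mathcal{P}_q(y')}(\mathrm{Sym}^0,x)-\widetilde{\mathcal{M}}_{\sigma,\mathcal{P}_q(y'')}(\mathrm{Sym}^0,x)|<\varepsilon$ for all $y',y''>y_1$ and all $|x|\le a$. Combining the two ranges, the family $(\widetilde{\mathcal{M}}_{\sigma,\mathcal{P}_q(y)}(\mathrm{Sym}^0,\cdot))_{y>y_1}$ is uniformly Cauchy on $\mathbb{R}$, whence the limit $\widetilde{\mathcal{M}}_{\sigma}(\mathrm{Sym}^0,x)$ exists uniformly in $x\in\mathbb{R}$ (and agrees with the limit from Lemma \ref{lem-abc}(c)).

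Next, for membership in $L^t$, I would pass the pointwise bounds to the limit. From \eqref{trivial_P} one gets $|\widetilde{\mathcal{M}}_{\sigma}(\mathrm{Sym}^0,x)|\le 1$, and from Lemma \ref{lem-abc}(b) (which gives $|\widetilde{\mathcal{M}}_{\sigma,\mathcal{P}_q(y)}(\mathrm{Sym}^0,x)|\le|\widetilde{\mathcal{M}}_{\sigma,\mathcal{P}_q^0}(\mathrm{Sym}^0,x)|$ for $y>y_0$) one gets, in the limit, $|\widetilde{\mathcal{M}}_{\sigma}(\mathrm{Sym}^0,x)|\le|\widetilde{\mathcal{M}}_{\sigma,\mathcal{P}_q^0}(\mathrm{Sym}^0,x)|$. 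Since $\widetilde{\mathcal{M}}_{\sigma,\mathcal{P}_q^0}(\mathrm{Sym}^0,\cdot)\in L^t$ for all $t\in[1,\infty]$ by Lemma \ref{lem-abc}(a), the same holds for $\widetilde{\mathcal{M}}_{\sigma}(\mathrm{Sym}^0,\cdot)$; equivalently one may integrate the explicit bound $|\widetilde{\mathcal{M}}_{\sigma}(\mathrm{Sym}^0,x)|\ll\min\{1,(1+|x|)^{-3/2}\}$, using $3t/2>1$ for $t\ge1$.

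Finally, for the $L^t$-convergence: the case $t=\infty$ is exactly the uniform convergence established above. For $1\le t<\infty$, Lemma \ref{lem-abc}(c) gives pointwise convergence $\widetilde{\mathcal{M}}_{\sigma,\mathcal{P}_q(y)}(\mathrm{Sym}^0,x)\to\widetilde{\mathcal{M}}_{\sigma}(\mathrm{Sym}^0,x)$, while Lemma \ref{lem-abc}(b) furnishes the $y$-uniform (for $y>y_0$) domination
\[
|\widetilde{\mathcal{M}}_{\sigma,\mathcal{P}_q(y)}(\mathrm{Sym}^0,x)-\widetilde{\mathcal{M}}_{\sigma}(\mathrm{Sym}^0,x)|^t
\le 2^t\,|\widetilde{\mathcal{M}}_{\sigma,\mathcal{P}_q^0}(\mathrm{Sym}^0,x)|^t\in L^1,
\]
so the dominated convergence theorem yields $\|\widetilde{\mathcal{M}}_{\sigma,\mathcal{P}_q(y)}(\mathrm{Sym}^0,\cdot)-\widetilde{\mathcal{M}}_{\sigma}(\mathrm{Sym}^0,\cdot)\|_{L^t}\to0$ as $y\to\infty$. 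The hard part, such as it is, is purely bookkeeping: arranging that \eqref{combined} and the majorant from Lemma \ref{lem-abc}(a),(b) are invoked with the fixed set $\mathcal{P}_q^0$, so that the tail in $|x|$ is controlled uniformly in $y$ and can be peeled off from the compact piece handled by Lemma \ref{lem-abc}(c).
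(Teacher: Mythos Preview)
Your proof is correct and follows essentially the same approach as the paper: both arguments split $\mathbb{R}$ into a compact piece handled by Lemma~\ref{lem-abc}(c) and a tail controlled uniformly in $y$ via the fixed set $\mathcal{P}_q^0$ through \eqref{combined} and Lemma~\ref{lem-abc}(a),(b). The only cosmetic difference is that you invoke the dominated convergence theorem directly for the $L^t$-convergence with $t<\infty$, whereas the paper writes out the equivalent $\varepsilon$--$R_t$ splitting by hand; the underlying majorant and the logic are the same.
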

\begin{proof}

By \eqref{lem2-a-pr}, for any $\mathcal{P}_q\supset\mathcal{P}_q^0$, for $t\in [1,\infty)$
we see that
\[
\int_{\mathbb{R}}
\left|
\widetilde{\mathcal{M}}_{\sigma,\mathcal{P}_q}(\mathrm{Sym}^0, x)
\right|^t |dx|
\]
is bounded by a constant depending only on $\sigma,t$, and $\mathcal{P}_q^0$.
Therefore, for any positive real number $\varepsilon>0$,
there exists $R_t=R_t(\varepsilon, \sigma, \mathcal{P}_q^0)>1$ 
such that
\begin{equation}\label{R}
\int_{|x|>R_t}
\left|
\widetilde{\mathcal{M}}_{\sigma,\mathcal{P}_q}(\mathrm{Sym}^0, x)
\right|^t |dx|
<
\frac{\varepsilon}{2^{t+1}}.
\end{equation}
Using \eqref{combined} we can choose $R_{\infty}$ sufficiently large so that
\begin{equation}\label{R1}
\sup_{|x|>R_{\infty}}
\left|\widetilde{\mathcal{M}}_{\sigma,\mathcal{P}_q}(\mathrm{Sym}^0, x)\right|
<
\frac{\varepsilon}{2}
\end{equation}
also holds.
For any real number $y>0$ which satisfies
$\mathbb{P}_q \supset \mathcal{P}_q(y) \supset \mathcal{P}_q^0$,
let $Y$ be a real number satisfying
$\mathbb{P}_q \supset \mathcal{P}_q(Y) \supset \mathcal{P}_q(y)$.
Since \eqref{M-est-1} in Lemma \ref{lemma-M-est} and (c) of Lemma \ref{lem-abc} 
yield that
\begin{align}\label{prop2}
&
\left|
\widetilde{\mathcal{M}}_{\sigma}(\mathrm{Sym}^0, x)
-
\widetilde{\mathcal{M}}_{\sigma, \mathcal{P}_q(y)}(\mathrm{Sym}^0, x)
\right|^t
\nonumber\\
=&
\lim_{Y\to\infty}
\Big|
\prod_{p\in \mathcal{P}_q(Y)}
\widetilde{\mathcal{M}}_{\sigma, p}(\mathrm{Sym}^0, x)
-
\widetilde{\mathcal{M}}_{\sigma, \mathcal{P}_q(y)}(\mathrm{Sym}^0, x)
\Big|^t
\nonumber\\
=&
\lim_{Y\to\infty}
\Big|
\prod_{p\in \mathcal{P}_q(Y)\setminus\mathcal{P}_q(y)}
\widetilde{\mathcal{M}}_{\sigma, p}(\mathrm{Sym}^0, x)
- 1
\Big|^t
\Big|
\widetilde{\mathcal{M}}_{\sigma, \mathcal{P}_q(y)}(\mathrm{Sym}^0, x)
\Big|^t
\nonumber\\
\leq&
\lim_{Y\to\infty}
2^t
\Big|
\widetilde{\mathcal{M}}_{\sigma, \mathcal{P}_q(y)}(\mathrm{Sym}^0, x)
\Big|^t
=
2^t
\Big|
\widetilde{\mathcal{M}}_{\sigma, \mathcal{P}_q(y)}(\mathrm{Sym}^0, x)
\Big|^t,
\end{align}
from \eqref{R} (with $\mathcal{P}_q=\mathcal{P}_q^0$) we see that
\begin{align}\label{p2-proof-1}
&
\int_{|x|>R_t}
\left|
\widetilde{\mathcal{M}}_{\sigma}(\mathrm{Sym}^0, x)
-
\widetilde{\mathcal{M}}_{\sigma, \mathcal{P}_q(y)}(\mathrm{Sym}^0, x)
\right|^t |dx|\notag
\\
\leq&
2^t\int_{|x|>R_t}
\Big|\widetilde{\mathcal{M}}_{\sigma, \mathcal{P}_q(y)}(\mathrm{Sym}^0, x)
\Big|^t|dx|
\leq
\frac{\varepsilon}{2}.
\end{align}
From (c), we see that there exists $y'=y'(\varepsilon,\sigma,t,R_t)
=y'(\varepsilon,\sigma,t,\mathcal{P}_q^0)>0$ such that
\[
\left|
\widetilde{\mathcal{M}}_{\sigma}(\mathrm{Sym}^0, x)
-
\widetilde{\mathcal{M}}_{\sigma, \mathcal{P}_q(y)}(\mathrm{Sym}^0, x)
\right|
<
\bigg(\frac{\varepsilon}{2}\bigg)^{1/t}
\bigg(\frac{1}{2R_t}\bigg)^{1/t}
\]
for $|x|\leq R_t$ and any $y>y'$.
Then we have
\begin{align}\label{p2-proof-2}
&
\int_{|x|\leq R_t}
\left|
\widetilde{\mathcal{M}}_{\sigma}(\mathrm{Sym}^0, x)
-
\widetilde{\mathcal{M}}_{\sigma, \mathcal{P}_q(y)}(\mathrm{Sym}^0, x)
\right|^t|dx|
\leq
\int_{|x|\leq R_t}\frac{\varepsilon}{4R_t}|dx|
=
\frac{\varepsilon}{2}.
\end{align}
From \eqref{p2-proof-1} and \eqref{p2-proof-2} we have
\[
\int_{\mathbb{R}} \left|
\widetilde{\mathcal{M}}_{\sigma}(\mathrm{Sym}^0, x)
-
\widetilde{\mathcal{M}}_{\sigma, \mathcal{P}_q(y)}(\mathrm{Sym}^0, x)
\right|^t
|dx|
<
\varepsilon,
\]
for any $y>y'$,
and so $\widetilde{\mathcal{M}}_{\sigma}(\mathrm{Sym}^0, x)\in L^t$
for $t\in [1, \infty)$.

As for the case $t=\infty$, first notice that
there exists $y''=y''(\varepsilon,\sigma,R_{\infty})
=y''(\varepsilon,\sigma,\mathcal{P}_q^0)$ 
with $\mathbb{P}_q \supset \mathcal{P}_q(y'') \supset \mathcal{P}_q^0$ such that
\[
\sup_{|x|\leq R_{\infty}}
\left|
\widetilde{\mathcal{M}}_{\sigma}(\mathrm{Sym}^0, x)
-
\widetilde{\mathcal{M}}_{\sigma,\mathcal{P}_q(y'')}(\mathrm{Sym}^0, x)
\right|
<
\varepsilon
\]
by (c). From \eqref{R1} and \eqref{prop2} (with $t=1$), we see that
\begin{align*}
\sup_{|x|> R_{\infty}}
\left|
\widetilde{\mathcal{M}}_{\sigma}(\mathrm{Sym}^0, x)
-
\widetilde{\mathcal{M}}_{\sigma,\mathcal{P}_q(y'')}(\mathrm{Sym}^0, x)
\right|
<&
2\sup_{|x|> R_{\infty}}
|\widetilde{\mathcal{M}}_{\sigma, \mathcal{P}_q(y'')}(\mathrm{Sym}^0, x)|
<
\varepsilon,
\end{align*}
hence the case $t=\infty$ follows.
\end{proof}

\begin{rem}\label{unif-in-sigma}
The convergence in the above proposition is also uniform in $\sigma$, in the region
$\sigma\geq 1/2+\varepsilon$ (for any $\varepsilon>0$).
\end{rem}

Let $n\geq 3$.    Recall that
\eqref{combined} holds for any $\mathcal{P}_q(y)\supset\mathcal{P}_q^{0,n}$.
Therefore Proposition \ref{tildeM} implies
\begin{equation}\label{tildeJW}
\widetilde{\mathcal{M}}_{\sigma}(\mathrm{Sym}^0, x)
=O_n\left( (1+|x|)^{-n/2}\right),
\end{equation}
where the implied constant depends on $\sigma$ and $\mathcal{P}_q^{0,n}$.
We also have
\begin{equation}\label{tildeTrivial}
|\widetilde{\mathcal{M}}_{\sigma}(\mathrm{Sym}^0, x)|\leq 1
\end{equation}
from \eqref{trivial_P}.

Finally, we define the function $\mathcal{M}_{\sigma}(\mathrm{Sym}^0, u)$ by
\begin{align}\label{Fourierinverse}
\mathcal{M}_{\sigma}(\mathrm{Sym}^0, u)
=
\int_{\mathbb{R}}\widetilde{\mathcal{M}}_{\sigma}(\mathrm{Sym}^0, x)\psi_{-u}(x)
\frac{dx}{\sqrt{2\pi}}
\end{align}
(analogous to \eqref{p-Fourierinverse}),
where we can see that the right-hand side of this equation is absolutely convergent
since $\widetilde{\mathcal{M}}_{\sigma}(\mathrm{Sym}^0, x)\in L^1$
(or, more quantitatively, by \eqref{tildeJW}).
%
%
\begin{prop}\label{M}
For $\sigma>1/2$,
the function $\mathcal{M}_{\sigma}(\mathrm{Sym}^0, u)$ satisfies the following
five properties.
\begin{itemize}
\item[\rm{(1)}] $\displaystyle \lim_{y\to\infty} \mathcal{M}_{\sigma, \mathcal{P}_q(y)}(\mathrm{Sym}^0, u)= \mathcal{M}_{\sigma}(\mathrm{Sym}^0, u)$
and this convergence is uniform in $u$.
\item[\rm{(2)}] The function $\mathcal{M}_{\sigma}(\mathrm{Sym}^0, u)$ is continuous in $u$ and non-negative.
\item[\rm{(3)}] $\displaystyle \lim_{|u| \to\infty} \mathcal{M}_{\sigma}(\mathrm{Sym}^0, u)=0$.
\item[\rm{(4)}] The functions $\mathcal{M}_{\sigma}(\mathrm{Sym}^0, u)$ and $\widetilde{\mathcal{M}}_{\sigma}(\mathrm{Sym}^0, x)$
are Fourier duals of each other.
\item[\rm{(5)}] $\displaystyle \int_{\mathbb{R}} \mathcal{M}_{\sigma}(\mathrm{Sym}^0, u) \frac{du}{\sqrt{2\pi}}=1$.
\end{itemize}
\end{prop}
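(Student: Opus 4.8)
The plan is to obtain all five properties from three inputs: the $L^t$-convergence $\widetilde{\mathcal{M}}_{\sigma,\mathcal{P}_q(y)}(\mathrm{Sym}^0,x)\to\widetilde{\mathcal{M}}_\sigma(\mathrm{Sym}^0,x)$ together with $\widetilde{\mathcal{M}}_\sigma(\mathrm{Sym}^0,\cdot)\in L^t$ from Proposition~\ref{tildeM}; the non-negativity and total mass $1$ of $\mathcal{M}_{\sigma,\mathcal{P}_q}(\mathrm{Sym}^0,\cdot)$ from Proposition~\ref{M_P}; and standard Fourier analysis (Fourier inversion, Riemann--Lebesgue, Fatou).

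For (1) I would use that $\mathcal{M}_{\sigma,\mathcal{P}_q(y)}(\mathrm{Sym}^0,u)$ is the Fourier inverse transform of $\widetilde{\mathcal{M}}_{\sigma,\mathcal{P}_q(y)}(\mathrm{Sym}^0,x)$ by \eqref{p-Fourierinverse}, and $\mathcal{M}_\sigma(\mathrm{Sym}^0,u)$ is the Fourier inverse transform of $\widetilde{\mathcal{M}}_\sigma(\mathrm{Sym}^0,x)$ by \eqref{Fourierinverse}, so that for $\mathcal{P}_q(y)\supset\mathcal{P}_q^0$,
\[
\bigl|\mathcal{M}_{\sigma,\mathcal{P}_q(y)}(\mathrm{Sym}^0,u)-\mathcal{M}_\sigma(\mathrm{Sym}^0,u)\bigr|\leq\int_{\mathbb{R}}\bigl|\widetilde{\mathcal{M}}_{\sigma,\mathcal{P}_q(y)}(\mathrm{Sym}^0,x)-\widetilde{\mathcal{M}}_\sigma(\mathrm{Sym}^0,x)\bigr|\frac{dx}{\sqrt{2\pi}};
\]
the right-hand side is the $L^1$-distance, which tends to $0$ by Proposition~\ref{tildeM} and does not depend on $u$, giving (1). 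Property (2) is then immediate: $\mathcal{M}_\sigma(\mathrm{Sym}^0,\cdot)$ is a uniform limit of the continuous functions $\mathcal{M}_{\sigma,\mathcal{P}_q(y)}(\mathrm{Sym}^0,\cdot)$ (Remark~\ref{M-p-conti}), hence continuous, and it is non-negative because each $\mathcal{M}_{\sigma,\mathcal{P}_q(y)}(\mathrm{Sym}^0,\cdot)\geq 0$ by Proposition~\ref{M_P}. Property (3) follows from the Riemann--Lebesgue lemma applied to \eqref{Fourierinverse}, since $\widetilde{\mathcal{M}}_\sigma(\mathrm{Sym}^0,\cdot)\in L^1$ by Proposition~\ref{tildeM} (alternatively one may integrate by parts using the decay \eqref{tildeJW}).

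For (4) and (5) I would first show $\mathcal{M}_\sigma(\mathrm{Sym}^0,\cdot)\in L^1$. By (1) we have pointwise convergence $\mathcal{M}_{\sigma,\mathcal{P}_q(y)}(\mathrm{Sym}^0,u)\to\mathcal{M}_\sigma(\mathrm{Sym}^0,u)$, and each $\mathcal{M}_{\sigma,\mathcal{P}_q(y)}(\mathrm{Sym}^0,\cdot)$ is non-negative with $\int_{\mathbb{R}}\mathcal{M}_{\sigma,\mathcal{P}_q(y)}(\mathrm{Sym}^0,u)\frac{du}{\sqrt{2\pi}}=1$ by \eqref{prop-1-2}, so Fatou's lemma gives $\int_{\mathbb{R}}\mathcal{M}_\sigma(\mathrm{Sym}^0,u)\frac{du}{\sqrt{2\pi}}\leq 1$, in particular $\mathcal{M}_\sigma(\mathrm{Sym}^0,\cdot)\in L^1$. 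Now $\mathcal{M}_\sigma(\mathrm{Sym}^0,\cdot)$ and $\widetilde{\mathcal{M}}_\sigma(\mathrm{Sym}^0,\cdot)$ are both in $L^1$ and continuous (the latter as a uniform limit of the continuous functions $\widetilde{\mathcal{M}}_{\sigma,\mathcal{P}_q(y)}(\mathrm{Sym}^0,\cdot)$ by Lemma~\ref{lem-abc}(c)), and $\mathcal{M}_\sigma$ is the Fourier inverse transform of $\widetilde{\mathcal{M}}_\sigma$ by \eqref{Fourierinverse}; hence the Fourier inversion theorem yields that $\widetilde{\mathcal{M}}_\sigma(\mathrm{Sym}^0,\cdot)$ is also the Fourier transform of $\mathcal{M}_\sigma(\mathrm{Sym}^0,\cdot)$, which is (4). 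Evaluating this Fourier duality at $x=0$, together with $\widetilde{\mathcal{M}}_{\sigma,p}(\mathrm{Sym}^0,0)=\frac{2}{\pi}\int_0^\pi\sin^2(\theta/r)\,d\theta=1$ (compare \eqref{trivial_p}) and the infinite product expression of Lemma~\ref{lem-abc}(c), gives
\[
\int_{\mathbb{R}}\mathcal{M}_\sigma(\mathrm{Sym}^0,u)\frac{du}{\sqrt{2\pi}}=\widetilde{\mathcal{M}}_\sigma(\mathrm{Sym}^0,0)=\prod_{p\in\mathbb{P}_q}\widetilde{\mathcal{M}}_{\sigma,p}(\mathrm{Sym}^0,0)=1,
\]
which is (5).

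The genuinely delicate point is the passage through (4)--(5): one must know that $\mathcal{M}_\sigma(\mathrm{Sym}^0,\cdot)$ is itself integrable before the Fourier inversion theorem can be invoked in both directions and the value at $x=0$ can be read off, which is why the Fatou estimate must precede it; once $\mathcal{M}_\sigma(\mathrm{Sym}^0,\cdot)\in L^1$ is in hand, everything else is a routine consequence of Proposition~\ref{tildeM} and Proposition~\ref{M_P}.
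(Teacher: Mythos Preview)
Your proposal is correct and follows essentially the same approach as the paper. The only minor differences are that for (3) the paper uses the compact support of $\mathcal{M}_{\sigma,\mathcal{P}_q(y)}(\mathrm{Sym}^0,\cdot)$ together with (1) rather than Riemann--Lebesgue, and for (4) the paper invokes $L^2$ Fourier inversion (since $\widetilde{\mathcal{M}}_\sigma(\mathrm{Sym}^0,\cdot)\in L^2$ by Proposition~\ref{tildeM}) directly, bypassing the Fatou step; both variants are valid and equally routine.
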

This is the analogue of Proposition 3.5 in \cite{im_2011} and
the proof is similar.
\begin{proof}
\par
(1).
From \eqref{p2-proof-1} with $t=1$ we have
\begin{align}\label{p2-proof-1-t=1}
&
\int_{|x|>R_1}
\left|
\widetilde{\mathcal{M}}_{\sigma}(\mathrm{Sym}^0, x)
-
\widetilde{\mathcal{M}}_{\sigma, \mathcal{P}_q(y)}(\mathrm{Sym}^0, x)
\right| |dx|\notag
\\
\leq&
2\int_{|x|>R_1}
\Big|\widetilde{\mathcal{M}}_{\sigma, \mathcal{P}_q(y)}(\mathrm{Sym}^0, x)
\Big||dx|
\leq
\frac{\varepsilon}{2}
\end{align}
for $\mathcal{P}_q^0\subset\mathcal{P}_q(y)$.
On the other hand, from (c), there exists $y'''=y'''(\varepsilon,\sigma,R_1)>0$ such that
\[
\left|
\widetilde{\mathcal{M}}_{\sigma}(\mathrm{Sym}^0, x)
-
\widetilde{\mathcal{M}}_{\sigma, \mathcal{P}_q(y)}(\mathrm{Sym}^0, x)
\right|
<
\frac{\varepsilon}{4R_1}
\]
for any $x$ with $|x|\leq R_1$ and for any $y>y'''$. Then we have
\[
\int_{|x|< R_1}
\left|
\widetilde{\mathcal{M}}_{\sigma}(\mathrm{Sym}^0, x)
-
\widetilde{\mathcal{M}}_{\sigma, \mathcal{P}_q(y)}(\mathrm{Sym}^0, x)
\right|
dx
<
\frac{\varepsilon}{2}.
\]
Therefore for any $y>y'''$ with
$\mathbb{P}_q\supset \mathcal{P}_q(y)\supset\mathcal{P}_q^0$
we have 
\begin{align*}
&
\left|
\mathcal{M}_{\sigma}(\mathrm{Sym}^0, u)
-
\mathcal{M}_{\sigma, \mathcal{P}_q(y')}(\mathrm{Sym}^0, u)
\right|
\\
\leq& 
\int_{\mathbb{R}}
\left|
\widetilde{\mathcal{M}}_{\sigma}(\mathrm{Sym}^0, x)
-
\widetilde{\mathcal{M}}_{\sigma, \mathcal{P}_q(y')}(\mathrm{Sym}^0, x)
\right|
dx
<
\varepsilon.
\end{align*}
\par
(2).
Since $\widetilde{\mathcal{M}}_{\sigma}(\mathrm{Sym}^0, x)\in L^1$
by Proposition~\ref{tildeM},
$\mathcal{M}_{\sigma}(\mathrm{Sym}^0, u)$ is continuous.
Since $\mathcal{M}_{\sigma, \mathcal{P}_q(y)}(\mathrm{Sym}^0, u)$
is non-negative function for any $y > 1$,
$\mathcal{M}_{\sigma}(\mathrm{Sym}^0, u)$ is also non-negative by (1).
\par  
(3). For any  $\varepsilon>0$, there exists $y>0$ such that
\[
\left|
\mathcal{M}_{\sigma}(\mathrm{Sym}^0, u)
-
\mathcal{M}_{\sigma,\mathcal{P}_q(y)}(\mathrm{Sym}^0, u)
\right|
<
\varepsilon
\]
for any $u$ by (1).
Therefore we see that
\[
\left|
\mathcal{M}_{\sigma}(\mathrm{Sym}^0, u)
\right|
<
\left|
\mathcal{M}_{\sigma,\mathcal{P}_q(y)}(\mathrm{Sym}^0, u)
\right|
+
\varepsilon.
\]
Since the support of $\mathcal{M}_{\sigma,\mathcal{P}_q(y)}(\mathrm{Sym}^0, u)$
is compact, we see that
\[
\lim_{|u|\to\infty} \mathcal{M}_{\sigma}(\mathrm{Sym}^0, u)\leq\varepsilon,
\]
and since $\varepsilon$ is arbitrary, the limit should be $0$.
\par
(4). We know $\widetilde{\mathcal{M}}_{\sigma}(\mathrm{Sym}^0, x)$ is in $L^2$.
By the inversion formula, we have
\begin{align}\label{prop-3-pr-4}
\int_{\mathbb{R}} \mathcal{M}_{\sigma}(\mathrm{Sym}^0, u)
\psi_x(u)\frac{du}{\sqrt{2\pi}}
=
\widetilde{\mathcal{M}}_{\sigma}(\mathrm{Sym}^0, x).
\end{align}
\par
(5). 
Putting $x=0$ in \eqref{prop-3-pr-4}, we have
\[
\widetilde{\mathcal{M}}_{\sigma}(\mathrm{Sym}^0, 0)
=
\int_{\mathbb{R}} \mathcal{M}_{\sigma}(\mathrm{Sym}^0, u) \frac{du}{\sqrt{2\pi}}.
\]
On the other hand, from (c) we know
\[
\widetilde{\mathcal{M}}_{\sigma}(\mathrm{Sym}^0, 0)
=
\lim_{y\to\infty}
\prod_{p\in \mathbb{P}_q(y)}
\widetilde{\mathcal{M}}_{\sigma, p}(\mathrm{Sym}^0, 0)
\]
and
\[
\widetilde{\mathcal{M}}_{\sigma, p}(\mathrm{Sym}^0, 0)
=
\int_{\mathbb{R}}\mathcal{M}_{\sigma, p}(\mathrm{Sym}^0, u) \frac{du}{\sqrt{2\pi}}
=1
\]
by Proposition~\ref{M_P}.
\end{proof}
%
%
\section{The key lemma and the deduction of Theorem \ref{main1}}\label{sec4}
\par
For a fixed $\sigma>1/2$
and a finite set $\mathcal{P}_q$
satisfying $\mathcal{P}_q^0\subset\mathcal{P}_q\subset \mathbb{P}_q$,
from \eqref{g_sigma} we see that
\[
\psi_x\circ \mathscr{G}_{\sigma, \mathcal{P}_q}(\alpha_f^r(\mathcal{P}_q))
=
\psi_x(\log L_{\mathcal{P}_q}(\mathrm{Sym}_f^r, \sigma)).
\]
Therefore, to prove our Theorem~\ref{main1},
it is important to consider the average 
\begin{align*}
\Avg(\psi_x\circ \mathscr{G}_{\sigma, \mathcal{P}_q})
=&
\lim_{q^m\to\infty}
\sum_{f\in S_k(q^m)}^{\qquad\prime}
\psi_x\circ \mathscr{G}_{\sigma, \mathcal{P}_q}(\alpha_f^r(\mathcal{P}_q)).
\end{align*}

\begin{rem}\label{rem-P-q-fixed}
We may understand that $\mathcal{P}_q^0$ and $\mathcal{P}_q$ are fixed, when $q^m\to\infty$.
In fact, if all $q$ is bounded by a constant $Q$ and $m\to\infty$, we may assume that
any element of $\mathcal{P}_q^0$ (and of $\mathcal{P}_q$) is larger than $Q$.
If $q\to\infty$, we may assume that $q$ is sufficiently large, so is
larger than any element of $\mathcal{P}_q^0$ and $\mathcal{P}_q$.
\end{rem}
\par
Our first aim in this section is to show the following
%
%
\begin{lem}\label{key}
Let $\mathcal{P}_q$ be a fixed finite subset of $\mathbb{P}_q$. 
In the case $2\leq k < 12$ or $k=14$, for $r=1,2$ (i.e. $\rho=0$), we have
\begin{align*}
\Avg(\psi_x \circ \mathscr{G}_{\sigma, \mathcal{P}_q})
=&
\int_{\Theta_{\mathcal{P}_q}}
\psi_x(
\mathscr{G}_{\sigma, \mathcal{P}_q} (e^{i\theta_{\mathcal{P}_q}r})
)
d^{\rm{ST}} \theta_{\mathcal{P}_q}
\\
=&
\int_{\mathcal{T}_{\mathcal{P}_q}}
\psi_x
\left(
\mathscr{G}_{\sigma, \mathcal{P}_q}(t_{\mathcal{P}_q}^r)
\right)
\prod_{p\in\mathcal{P}_q}
\bigg(\frac{t_p^2 -2 + t_p^{-2}}{-2}\bigg)
d^H t_{\mathcal{P}_q}.
\end{align*}
The above convergence is uniform in $|x|\leq R$ for any $R>0$.
\end{lem}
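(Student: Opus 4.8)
The plan is to use the identity \eqref{g_sigma}, which gives $\mathscr{G}_{\sigma,\mathcal{P}_q}(\alpha_f^r(\mathcal{P}_q))=\log L_{\mathcal{P}_q}(\mathrm{Sym}_f^r,\sigma)$, together with the parametrization $\alpha_f(p)=e^{i\theta_f(p)}$, $\beta_f(p)=e^{-i\theta_f(p)}$, $\theta_f(p)\in[0,\pi]$, under which the relations \eqref{euler} read $\lambda_f(p^n)=U_n(\cos\theta_f(p))$ for $p\in\mathbb{P}_q$, where $U_n$ is the $n$-th Chebyshev polynomial of the second kind. The only analytic input about modular forms is Ichihara's formula \eqref{P} together with the multiplicativity of $\lambda_f$; recall also that $\mathcal{P}_q$ (and $\mathcal{P}_q^0$) may be regarded as fixed while $q^m\to\infty$ by Remark~\ref{rem-P-q-fixed}. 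The relevant orthogonality is $\tfrac{2}{\pi}\int_0^\pi U_m(\cos\theta)U_n(\cos\theta)\sin^2\theta\,d\theta=\delta_{m,n}$, so that the functions $\theta_{\mathcal{P}_q}\mapsto\prod_{p\in\mathcal{P}_q}U_{n_p}(\cos\theta_p)$ form an orthonormal family on $\Theta_{\mathcal{P}_q}$ for $d^{\mathrm{ST}}\theta_{\mathcal{P}_q}$; in particular $d^{\mathrm{ST}}\theta_{\mathcal{P}_q}$ is a probability measure (cf.\ \eqref{prop-1-2}).

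First I would approximate the integrand by a trigonometric polynomial, uniformly in $x$. Since $\mathscr{G}_{\sigma,\mathcal{P}_q}$ is bounded on $\Theta_{\mathcal{P}_q}$ and $|\psi_{x_1}(w)-\psi_{x_2}(w)|\le|x_1-x_2|\,|w|$, the family $\{\,\theta_{\mathcal{P}_q}\mapsto\psi_x(\mathscr{G}_{\sigma,\mathcal{P}_q}(e^{ir\theta_{\mathcal{P}_q}}))\mid |x|\le R\,\}$ is equicontinuous and uniformly bounded on the compact cube $\prod_{p\in\mathcal{P}_q}[0,\pi]$, and each member is even and $2\pi$-periodic in every variable. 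Hence (e.g.\ via Fej\'er means, whose rate of convergence depends only on the common modulus of continuity) there exist an integer $N=N(\varepsilon,R,\sigma,\mathcal{P}_q)$ and, for each $|x|\le R$, an even trigonometric polynomial $P_{\varepsilon,x}$ of degree $\le N$ in each variable with
\[
\sup_{\theta_{\mathcal{P}_q}}\bigl|\psi_x(\mathscr{G}_{\sigma,\mathcal{P}_q}(e^{ir\theta_{\mathcal{P}_q}}))-P_{\varepsilon,x}(\theta_{\mathcal{P}_q})\bigr|<\varepsilon .
\]
Expanding $P_{\varepsilon,x}$ in the orthonormal family above we get $P_{\varepsilon,x}(\theta_{\mathcal{P}_q})=\sum_{\mathbf n:\,0\le n_p\le N}d_{\mathbf n}(x)\prod_{p\in\mathcal{P}_q}U_{n_p}(\cos\theta_p)$ with $d_{\mathbf n}(x)=\int_{\Theta_{\mathcal{P}_q}}P_{\varepsilon,x}\prod_pU_{n_p}(\cos\theta_p)\,d^{\mathrm{ST}}\theta_{\mathcal{P}_q}$, a finite sum whose coefficients are bounded uniformly in $|x|\le R$.

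Next, set $\ell(\mathbf n)=\prod_{p\in\mathcal{P}_q}p^{n_p}$, so that $\prod_pU_{n_p}(\cos\theta_f(p))=\prod_p\lambda_f(p^{n_p})=\lambda_f(\ell(\mathbf n))$ by multiplicativity. Only finitely many $\mathbf n$ occur and each $\ell(\mathbf n)$ is bounded independently of $q^m$, so \eqref{P} may be applied term by term, giving
\[
\sum_{f\in S_k(q^m)}^{\qquad\prime}P_{\varepsilon,x}(\theta_f(\mathcal{P}_q))
=d_{\mathbf 0}(x)+E(q^m)\sum_{\mathbf n}d_{\mathbf n}(x)\,\ell(\mathbf n)^{(k-1)/2},
\]
since $\ell(\mathbf n)=1$ precisely when $\mathbf n=\mathbf 0$; here $d_{\mathbf 0}(x)=\int_{\Theta_{\mathcal{P}_q}}P_{\varepsilon,x}\,d^{\mathrm{ST}}\theta_{\mathcal{P}_q}$ by orthonormality, and the second term is $O_{\varepsilon,R}(E(q^m))$, which tends to $0$ as $q^m\to\infty$ by \eqref{E2}, uniformly in $|x|\le R$.

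Finally I would combine the three error contributions: by \eqref{P1} we have $\sum_{f\in S_k(q^m)}^{\qquad\prime}1=1+E(q^m)$, and $d^{\mathrm{ST}}\theta_{\mathcal{P}_q}$ has total mass $1$, so for every $q^m$ and every $|x|\le R$,
\[
\Bigl|\sum_{f\in S_k(q^m)}^{\qquad\prime}\psi_x(\mathscr{G}_{\sigma,\mathcal{P}_q}(\alpha_f^r(\mathcal{P}_q)))-\int_{\Theta_{\mathcal{P}_q}}\psi_x(\mathscr{G}_{\sigma,\mathcal{P}_q}(e^{ir\theta_{\mathcal{P}_q}}))\,d^{\mathrm{ST}}\theta_{\mathcal{P}_q}\Bigr|\le\varepsilon\,(1+E(q^m))+O_{\varepsilon,R}(E(q^m))+\varepsilon .
\]
Taking $\limsup_{q^m\to\infty}$ bounds the left-hand side (uniformly in $|x|\le R$) by $2\varepsilon$, and letting $\varepsilon\to0$ yields the first claimed equality, uniformly in $|x|\le R$. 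The second equality is exactly \eqref{prop-1-1} of Proposition~\ref{M_P} applied to the continuous function $\Psi=\psi_x$. The main obstacle is organizational rather than substantive: arranging the trigonometric approximation, and hence the whole estimate, to be uniform in $|x|\le R$, and --- crucially --- fixing $\varepsilon$, and thus a finite truncation with $\ell(\mathbf n)$ bounded independently of $q^m$, \emph{before} letting $q^m\to\infty$, so that the potential clash between large $\ell$ and the error term $E(q^m)$ in \eqref{P} never occurs.
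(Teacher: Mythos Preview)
Your proof is correct and follows a genuinely different, cleaner route than the paper. The paper approximates each factor $\psi_x\circ g_{\sigma,p}(t_p)$ by a Taylor polynomial in $t_p$, forms the product $\Phi_{\sigma,\mathcal{P}_q}$, and then by explicit algebraic manipulation rewrites $e^{i\nu r\theta_f(p)}+e^{-i\nu r\theta_f(p)}$ as $\lambda_f(p^{\nu r})-\lambda_f(p^{\nu r-2})$ before applying \eqref{P}; on the integral side it performs a parallel explicit computation over $\mathcal{T}_{\mathcal{P}_q}$ and matches the two answers term by term. You instead approximate the whole function on $\Theta_{\mathcal{P}_q}$ by Fej\'er means and expand in the Chebyshev basis $\prod_p U_{n_p}(\cos\theta_p)$, which is simultaneously the orthonormal basis for $d^{\mathrm{ST}}$ \emph{and} satisfies $U_n(\cos\theta_f(p))=\lambda_f(p^n)$; thus multiplicativity plus \eqref{P} give the average side, and orthonormality gives the integral side, with no explicit bookkeeping. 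What your approach buys is conceptual transparency and a unified treatment of $r=1,2$ without the case split into $\diamond=1,2$; what the paper's approach buys is that the same Taylor-coefficient machinery (the $G_a(p,x)$) is reused verbatim in the harder quantitative estimate of $\mathcal{Y}_{\log q^m}$ in Section~\ref{sec5}. One small remark: your displayed identity for $\sum'_f P_{\varepsilon,x}(\theta_f(\mathcal{P}_q))$ should carry an $O$-symbol on the second term, since \eqref{P} only gives the error up to an implied constant; you handle this correctly in the following sentence, so this is purely notational.
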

This lemma is an analogue of \cite[Lemma 4.1]{mu}, and the structure of the proof
is similar.

\begin{proof}
Let $1>\varepsilon'>0$ and $p\in\mathcal{P}_q$.
Considering the Taylor expansion of 
$g_{\sigma, p}(t)=-\log (1-t p^{-\sigma})$, 
we find that there exist an $M_p=M_p(\varepsilon',R)\in\mathbb{N}$ and
$c_{m_p}=c_{m_p}(x,p,\sigma)\in\mathbb{C}$ such that
$\psi_x\circ g_{\sigma, p}$ can be approximated by a polynomial as
\begin{equation}\label{app-uni}
\bigg|
\psi_x \circ g_{\sigma, p}(t)
- 
\sum_{m_p=0}^{M_p} c_{m_p} t^{m_p}
\bigg|
<{\varepsilon'},
\end{equation}
uniformly on $T$ with respect to $t$ and also on $|x|\leq R$ with respect
to $x$. Here $c_0=1$. Write
\[
\phi_{\sigma, p}(t_p ; M_p)=
\sum_{m_p=0}^{M_p} c_{m_p} t_p^{m_p}
\]
and define
\[
\Phi_{\sigma, \mathcal{P}_q}(t_{\mathcal{P}_q};  M_{\mathcal{P}_q})
=
\prod_{p \in \mathcal{P}_q}
\phi_{\sigma, p}(t_p ; M_p)\phi_{\sigma, p}(t_p^{-1} ; M_p)
\phi_{\sigma, p}(\delta_{r,\text{even}} ; M_p),
\]
where $M_{\mathcal{P}_q}=(M_p)_{p\in\mathcal{P}_q}$.
Let $\varepsilon''>0$.
Choosing $\varepsilon'$ (depending
on $|\mathcal{P}_q|$ and $\varepsilon''$) sufficiently small, we obtain
$M_{\mathcal{P}_q}$ such that
\begin{equation}\label{app}
|
\psi_x \circ \mathscr{G}_{\sigma, \mathcal{P}_q}(t_{\mathcal{P}_q})
-
\Phi_{\sigma, \mathcal{P}_q}(t_{\mathcal{P}_q} ; M_{\mathcal{P}_q})| 
< \varepsilon'',
\end{equation}
again uniformly on $T_{\mathcal{P}_q}$
with respect to $t_{\mathcal{P}_q}$ and also on $|x|\leq R$ 
with respect to $x$.
In fact, since
\begin{align*}
&
\psi_x \circ \mathscr{G}_{\sigma, \mathcal{P}_q}(t_{\mathcal{P}_q})
\\
&=
\prod_{p\in\mathcal{P}_q}
\psi_x(g_{\sigma, p}(t_p))\psi_x(g_{\sigma, p}(t_p^{-1}))
\psi_x(g_{\sigma, p}(\delta_{r,\text{even}}))
\\
&=
\prod_{p\in\mathcal{P}_q}
(\phi_{\sigma, p}(t_p ; M_p) +O(\varepsilon'))
(\phi_{\sigma, p}(t_p^{-1} ; M_p)+O(\varepsilon'))
(\phi_{\sigma, p}(\delta_{r,\text{even}} ; M_p)+O(\varepsilon'))
\\
&=\Phi_{\sigma, \mathcal{P}_q}(t_{\mathcal{P}_q} ; M_{\mathcal{P}_q})
+(\text{remainder terms}),
\end{align*}
we obtain \eqref{app}.
Here, in the final stage, we use the fact
$\phi_{\sigma, p}=O(1)$ which follows from \eqref{app-uni}.
%
%
\par
First we 
express the average of the value of
$\psi_x\circ \mathscr{G}_{\sigma, \mathcal{P}_q}$ 
by using $\Phi_{\sigma, \mathcal{P}_q}$.
Let $\varepsilon>0$.
From \eqref{app} with
$t_{\mathcal{P}_q}=\alpha_f(\mathcal{P}_q)$, 
$\varepsilon''=\varepsilon/2$
and \eqref{P1}, we have 
\begin{align}\label{psiPhi-Psi}
&
\left|
\sum_{f\in S_k(q^m)}^{\qquad\prime}
\psi_x \circ \mathscr{G}_{\sigma, \mathcal{P}_q}(\alpha_f^r(\mathcal{P}_q))
-
\sum_{f\in S_k(q^m)}^{\qquad\prime}
\Phi_{\sigma, \mathcal{P}_q}(\alpha_f^r(\mathcal{P}_q) ; M_{\mathcal{P}_q})
\right|
\nonumber\\
< &
\sum_{f\in S_k(q^m)}^{\qquad\prime}
\varepsilon''
=\frac{\varepsilon}{2}(1 +O(E(q^m))).
\end{align}
Therefore, if $q^m$ is sufficiently large, from \eqref{E2}
we see that
\begin{align}\label{step-1}
\left|
\sum_{f \in S(q^m)}^{\qquad\prime} 
\psi_x \circ \mathscr{G}_{\sigma, \mathcal{P}_q}(\alpha_f^r(\mathcal{P}_q))
-
\sum_{f\in S(q^m)}^{\qquad\prime}
\Phi_{\sigma, \mathcal{P}_q}(\alpha_f^r(\mathcal{P}_q); M_{\mathcal{P}_q})
\right|<\varepsilon.
\end{align}
%
%
\par
Next we calculate
$\Phi_{\sigma, \mathcal{P}_q}$ as follows;
\begin{align*}
&
\Phi_{\sigma, \mathcal{P}_q}(\alpha_f^r(\mathcal{P}_q) ;  M_{\mathcal{P}_q})
\\
=&
\prod_{ p\in \mathcal{P}_q}
\Big(\sum_{m_p=0}^{M_p}c_{m_p}e^{ im_pr\theta_f(p)}\Big)
\Big(\sum_{n_p=0}^{M_p}c_{n_p}e^{- in_pr\theta_f(p)}\Big)
\Big(\sum_{\ell_p=0}^{M_p}c_{\ell_p}\delta_{r,\text{even}}^{\ell_p}\Big)
\\
=&
\prod_{p\in \mathcal{P}}
\Big(\sum_{\ell_p=0}^{M_p}c_{\ell_p}\delta_{r,\text{even}}^{\ell_p}\Big)
\Big(
\sum_{m_p=0}^{M_p}c_{m_p}^2
+
\sum_{m_p=0}^{M_p}
\sum_{\substack{n_p=0\\m_p \neq n_p}}^{M_p}
c_{m_p}e^{ im_p r\theta_f(p)}
c_{n_p}e^{- in_p r\theta_f(p)}
\Big)
\\
=&
\prod_{p \in \mathcal{P}}
\Big(\sum_{\ell_p=0}^{M_p}c_{\ell_p}\delta_{r,\text{even}}^{\ell_p}\Big)
\Big(
\sum_{m_p=0}^{M_p}c_{m_p}^2
+
\sum_{m_p=0}^{M_p}
\sum_{\substack{n_p=0 \\ m_p<n_p}}^{M_p}
c_{m_p}c_{n_p}
e^{ i(m_p-n_p) r\theta_f(p)}
\\
&+
\sum_{m_p=0}^{M_p}
\sum_{\substack{n_p=0 \\ m_p<n_p}}^{M_p}
c_{m_p}c_{n_p}
e^{ i(n_p-m_p) r\theta_f(p)}
\Big).
\end{align*}
Putting $n_p-m_p=\nu_p$,
the right-hand side is equal to
\begin{align*}
&
\prod_{ p \in \mathcal{P}_q}
\Big(\sum_{\ell_p=0}^{M_p}c_{\ell_p}\delta_{r,\text{even}}^{\ell_p}\Big)
\Big(
\sum_{m_p=0}^{M_p}c_{m_p}^2
+
\sum_{\nu_p=1}^{M_p}
\sum_{n_p=\nu_p}^{M_p}
c_{n_p-\nu_p}c_{n_p}
\big(
e^{ i\nu_p r\theta_f(p)}
+
e^{- i\nu_p r\theta_f(p)}
\big)
\Big)
\\
=&
\prod_{ p \in \mathcal{P}_q}
\Big(\sum_{\ell_p=0}^{M_p}c_{\ell_p}\delta_{r,\text{even}}^{\ell_p}\Big)
\bigg(
\sum_{m_p=0}^{M_p}c_{m_p}^2
+
\sum_{\nu_p=3}^{M_p}
\sum_{n_p=\nu_p}^{M_p}
c_{n_p-\nu_p}c_{n_p}
\big(
e^{ i\nu_p r\theta_f(p)}
+
e^{- i\nu_p r\theta_f(p)}
\big)
\\
&+
\sum_{n_p=2}^{M_p}
c_{n_p-2}c_{n_p}
\big(
e^{ 2i r\theta_f(p)}
+
e^{- 2i r\theta_f(p)}
\big)
+
\sum_{n_p=1}^{M_p}
c_{n_p-1}c_{n_p}
\big(
e^{ ir\theta_f(p)}
+
e^{- ir\theta_f(p)}
\big)
\bigg).
\end{align*}
Therefore, in the case $r=1$, using \eqref{euler} we see that
\begin{align}\label{odd}
&
\Phi_{\sigma, \mathcal{P}_q}(\alpha_f(\mathcal{P}_q); M_{\mathcal{P}_q})
\nonumber\\
=&
\prod_{p\in \mathcal{P}_q}
\bigg(
\sum_{m_p=0}^{M_p}c_{m_p}^2
+
\sum_{r_p=3}^{M_p}
\sum_{n_p=\nu_p}^{M_p}
c_{n_p-\nu_p}c_{n_p}
\big(
\lambda_f(p^{\nu_p})
-
\lambda_f(p^{\nu_p-2})
\big)
\nonumber\\
&
+
\sum_{n_p=2}^{M_p}
c_{n_p-2}c_{n_p}
\big(
\lambda_f(p^2)-1
\big)
+
\sum_{n_p=1}^{M_p}
c_{n_p-1}c_{n_p}
\lambda_f(p)
\bigg)
\end{align}
In the case $r=2$, similarly we see that
\begin{align}\label{even}
&
\Phi_{\sigma, \mathcal{P}_q}(\alpha_f^2(\mathcal{P}_q)); M_{\mathcal{P}_q})
\nonumber\\
=&
\prod_{ p \in \mathcal{P}_q}
\Big(\sum_{\ell_p=0}^{M_p}c_{\ell_p}\Big)
\bigg(
\sum_{m_p=0}^{M_p}c_{m_p}^2
+
\sum_{\nu_p=3}^{M_p}
\sum_{n_p=\nu_p}^{M_p}
c_{n_p-r_p}c_{n_p}
\big(
\lambda_f(p^{2\nu_p})
-
\lambda_f(p^{2\nu_p-2})
\big)
\nonumber\\
&+
\sum_{n_p=2}^{M_p}
c_{n_p-2}c_{n_p}
\big(
\lambda_f(p^4)-\lambda_f(p^2)
\big)
+
\sum_{n_p=1}^{M_p}
c_{n_p-1}c_{n_p}
\big(
\lambda_f(p^2)-1
\big)
\bigg)
\end{align}
From \eqref{odd} and \eqref{even},
by using \eqref{P} and the multiplicity of $\lambda_f$,  we obtain
\begin{align*}
&
\sum_{f\in S(q^m)}^{\qquad\prime}
\Phi_{\sigma, \mathcal{P}_q}(\alpha_f^r(\mathcal{P}_q) ; M_{\mathcal{P}_q})
\\
=&
\prod_{ p\in \mathcal{P}_q}
\Big(\sum_{\ell_p=0}^{M_p}c_{\ell_p}\delta_{r,\text{even}}^{\ell_p}\Big)
\bigg(
\sum_{m_p=0}^{M_p}c_{m_p}^2
-
\sum_{m_p=0}^{M_p-\diamond}
c_{m_p}c_{m_p+\diamond}
\bigg)\\
&\qquad+ O\Big(\prod_{ p\in \mathcal{P}_q}
\Big(\sum_{\ell_p=0}^{M_p}c_{\ell_p}\delta_{r,\text{even}}^{\ell_p}\Big)
E(q^m)
\Big),
\end{align*}
where $\diamond=2$ if $r=1$ and $\diamond=1$ if $r=2$.
In this estimate,
the implied constant of the error term depends on 
$\mathcal{P}_q$, $x$, $\sigma$ and 
$ M_{\mathcal{P}_q}= M_{\mathcal{P}_q}(\varepsilon',R)$
(hence depends on $\varepsilon$ under the above choice of $\varepsilon'$).
But still, 
this error term is smaller than $\varepsilon$ for sufficiently large $q^m$.
Combining this with \eqref{step-1}, we obtain
\begin{equation}\label{step-2}
\left|
\sum_{f \in S(q^m)}^{\qquad\prime} 
\psi_x \circ \mathscr{G}_{\sigma, \mathcal{P}_q}(\alpha_f^r(\mathcal{P}_q))
-
\prod_{p \in \mathcal{P}_q}
\Big(\sum_{\ell_p=0}^{M_p}c_{\ell_p}\delta_{r,\text{even}}^{\ell_p}\Big)
\bigg(
\sum_{m_p}^{M_p}c_{m_p}^2
-
\sum_{m_p=0}^{M_p-\diamond}
c_{m_p}c_{m_p+\diamond}
\bigg)
\right|
<
2\varepsilon.
\end{equation}
\par
Lastly we calculate the integral
in the statement of Lemma~\ref{key}.
Putting $\Psi=\psi_x$ in \eqref{prop-1-1} of Proposition~\ref{M_P}, we already know 
that
\begin{align*}
& 
\int_{\Theta_{\mathcal{P}_q}}
\psi_x\big(\mathscr{G}_{\sigma, \mathcal{P}_q}(e^{i \theta_{\mathcal{P}_q}r})\big)
d^{\rm ST}\theta_{\mathcal{P}_q}
\\
=&
\int_{\mathcal{T}_{\mathcal{P}_q}}
\psi_x(\mathscr{G}_{\sigma, \mathcal{P}_q}( t_{\mathcal{P}_q}^r))
\prod_{p\in\mathcal{P}_q}
\bigg(\frac{t_p^2-2+t_p^{-2}}{-2}\bigg)
d^H t_{\mathcal{P}_q},
\end{align*}
so our remaining task is to prove that these integrals are equal to
$\Avg(\psi_x\circ \mathscr{G}_{\sigma,\mathcal{P}_q})$.
For any $\varepsilon>0$, using \eqref{app}, we have
\begin{align*}
&
\int_{\mathcal{T}_{\mathcal{P}_q}}
\psi_x(\mathscr{G}_{\sigma, \mathcal{P}_q}(t_{\mathcal{P}_q}^r))
\prod_{p\in\mathcal{P}_q}
\bigg(\frac{t_p^2-2+t_p^{-2}}{-2}\bigg)
d^H t_{\mathcal{P}_q}
\\
=&
\int_{\mathcal{T}_{\mathcal{P}_q}}
\bigg(
\psi_x(\mathscr{G}_{\sigma, \mathcal{P}_q}(t_{\sigma, \mathcal{P}_q}^r))
-
\Phi_{\sigma, \mathcal{P}_q}(t_{\mathcal{P}_q}^r ; M_{\mathcal{P}_q})
\bigg)
\prod_{p\in\mathcal{P}_q}
\bigg(\frac{t_p^2-2+t_p^{-2}}{-2}\bigg)
d^H t_{\mathcal{P}_q}
\\
&+
\int_{\mathcal{T}_{\mathcal{P}_q}}
\Phi_{\sigma, \mathcal{P}_q}(t_{\mathcal{P}_q}^r ; M_{\mathcal{P}_q })
\prod_{p\in\mathcal{P}_q}
\bigg(\frac{t_p^2-2+t_p^{-2}}{-2}\bigg)
d^H t_{\mathcal{P}_q}
\\
=&
\prod_{p\in\mathcal{P}_q}
\int_{\mathcal{T}}
\phi_{\sigma, p}(t_p^r; M_p)\phi_{\sigma, p}(t_p^{-r}; M_p)
\phi_{\sigma, p}(\delta_{r,\text{even}}; M_p)
\bigg(\frac{t_p^2-2+t_p^{-2}}{-2}\bigg)
\frac{dt_p}{2\pi it_p}
+
O(\varepsilon).
\end{align*}
Since
\begin{align*}
&
\int_{\mathcal{T}}
\phi_{\sigma, p}(t_p^r; M_p)\phi_{\sigma, p}(t_p^{-r}; M_p)
\phi_{\sigma, p}(\delta_{r,\text{even}}; M_p)
\bigg(\frac{t_p^2-2+t_p^{-2}}{-2}\bigg)
\frac{dt_p}{2\pi it_p}
\\
=&
\int_{\mathcal{T}}
\bigg(
\sum_{m=0}^{M_p}c_mt_p^{mr}
\bigg)
\bigg(
\sum_{n=0}^{M_p}c_nt_p^{-nr}
\bigg)
\bigg(
\sum_{\ell=0}^{M_p}c_n\delta_{r,\text{even}}^{\ell}
\bigg)
\bigg(\frac{t_p^2-2+t_p^{-2}}{-2}\bigg)
\frac{dt_p}{2\pi it_p}
\\
=&
\bigg(
\sum_{\ell=0}^{M_p}c_n\delta_{r,\text{even}}^{\ell}
\bigg)
\int_0^{2\pi}
\bigg(
\sum_{m=0}^{M_p}c_me^{imr\theta}
\bigg)
\bigg(
\sum_{n=0}^{M_p}c_ne^{-inr\theta}
\bigg)
\bigg(\frac{e^{2i\theta}-2+e^{-2i\theta}}{-2}\bigg)
\frac{d\theta}{2\pi}
\\
=&
\bigg(
\sum_{\ell=0}^{M_p}c_n\delta_{r,\text{even}}^{\ell}
\bigg)
\int_0^{2\pi}
\bigg(
\sum_{m=0}^{M_p}c_m^2
+
\sum_{m=0}^{M_p}
\sum_{\substack{n=0\\ m\neq n}}^{M_p}
c_mc_ne^{i(m -n)r\theta}
\bigg)
\bigg(\frac{e^{2i\theta}-2+e^{-2i\theta}}{-2}\bigg)
\frac{d\theta}{2\pi}
\\
=&
\bigg(
\sum_{\ell=0}^{M_p}c_n\delta_{r,\text{even}}^{\ell}
\bigg)
\begin{cases}
\displaystyle
\sum_{m=0}^{M_p}c_m^2 -\sum_{m=0}^{M_p-2} c_mc_{m+2}
& r=1\\
\displaystyle
\sum_{m=0}^{M_p}c_m^2 -\sum_{m=0}^{M_p-1} c_mc_{m+1}
& r=2,
\end{cases}
\end{align*}
we have
\begin{align}\label{step-3}
&
\int_{\mathcal{T}_{\mathcal{P}_q}}
\psi_x(\mathscr{G}_{\sigma, \mathcal{P}_q}(t_{\mathcal{P}_q}^r)
\prod_{p\in\mathcal{P}_q}
\bigg(\frac{t_p^2-2+t_p^{-2}}{-2}\bigg)
d^H t_{\mathcal{P}_q}
\nonumber\\
=&
\prod_{p\in\mathcal{P}_q}
\bigg(
\sum_{\ell=0}^{M_p}c_n\delta_{r,\text{even}}^{\ell}
\bigg)
\bigg(
\sum_{m_p=0}^{M_p}c_{m_p}^2
-
\sum_{m_p=0}^{M_p-\diamond}
c_{m_p}c_{m_p+\diamond}
\bigg)
+
O(\varepsilon).
\end{align}
From \eqref{step-2} and \eqref{step-3} we find that the identity in the statement 
of Lemma~\ref{key}
holds with the error $O(\varepsilon)$,
but this error can be arbitrarily small if we choose sufficiently large $q^m$,
so the assertion of Lemma~\ref{key} follows.
\end{proof}
%
%

The next lemma is an analogue of \cite[Lemma 4.3]{mu}.

\begin{lem}\label{keylemma}
Suppose Assumptions (Analytical condtions) and (GRH),
in the case  $2\leq k <12$ or $k=14$, for $\sigma>1/2$ and $r=1,2$,
we have
\[
\Avg\psi_x (\log L_{\mathbb{P}_q}(\mathrm{Sym}_f^r, \sigma))
=
\int_{\mathbb{R}}
\mathcal{M}_{\sigma}(\mathrm{Sym}^0, u)
\psi_x(u)\frac{du}{\sqrt{2\pi}},
\]
where $\psi_x(u)=\exp (ixu)$. 
The above convergence is uniform in $|x|\leq R$ for any $R>0$.
\end{lem}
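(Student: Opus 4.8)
The plan is as follows. By part~(4) of Proposition~\ref{M} (concretely, by \eqref{prop-3-pr-4}), the integral on the right-hand side equals $\widetilde{\mathcal{M}}_{\sigma}(\mathrm{Sym}^0, x)$, so it suffices to show that the average of $\psi_x\circ\log L_{\mathbb{P}_q}(\mathrm{Sym}_f^r,\sigma)$ over $f\in S_k(q^m)$ tends to $\widetilde{\mathcal{M}}_{\sigma}(\mathrm{Sym}^0, x)$ as $q^m\to\infty$, uniformly for $|x|\le R$. For a parameter $y>y_0$ set $\mathcal{P}_q(y)=\{p\in\mathbb{P}_q\mid p\le y\}$ as in Lemma~\ref{lem-abc}~(c); by Remark~\ref{rem-P-q-fixed} we may treat $\mathcal{P}_q(y)$ as fixed while $q^m\to\infty$ for each fixed $y$, and by \eqref{g_sigma} we have $\mathscr{G}_{\sigma,\mathcal{P}_q(y)}(\alpha_f^r(\mathcal{P}_q(y)))=\log L_{\mathcal{P}_q(y)}(\mathrm{Sym}_f^r,\sigma)$. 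I would then bound the total discrepancy by $I_1(q^m,y)+I_2(q^m,y)+I_3(y)$, where
\begin{align*}
I_1&=\Bigl|\sum_{f\in S_k(q^m)}^{\qquad\prime}\psi_x\bigl(\log L_{\mathbb{P}_q}(\mathrm{Sym}_f^r,\sigma)\bigr)-\sum_{f\in S_k(q^m)}^{\qquad\prime}\psi_x\bigl(\mathscr{G}_{\sigma,\mathcal{P}_q(y)}(\alpha_f^r(\mathcal{P}_q(y)))\bigr)\Bigr|,\\
I_2&=\Bigl|\sum_{f\in S_k(q^m)}^{\qquad\prime}\psi_x\bigl(\mathscr{G}_{\sigma,\mathcal{P}_q(y)}(\alpha_f^r(\mathcal{P}_q(y)))\bigr)-\widetilde{\mathcal{M}}_{\sigma,\mathcal{P}_q(y)}(\mathrm{Sym}^0, x)\Bigr|,\\
I_3&=\bigl|\widetilde{\mathcal{M}}_{\sigma,\mathcal{P}_q(y)}(\mathrm{Sym}^0, x)-\widetilde{\mathcal{M}}_{\sigma}(\mathrm{Sym}^0, x)\bigr|.
\end{align*}

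The terms $I_2$ and $I_3$ are already under control. Indeed, by Proposition~\ref{M_P} (equation \eqref{prop-1-1} with $\Psi=\psi_x$), $\widetilde{\mathcal{M}}_{\sigma,\mathcal{P}_q(y)}(\mathrm{Sym}^0, x)$ equals $\int_{\Theta_{\mathcal{P}_q(y)}}\psi_x\bigl(\mathscr{G}_{\sigma,\mathcal{P}_q(y)}(e^{i\theta_{\mathcal{P}_q(y)}r})\bigr)\,d^{\rm ST}\theta_{\mathcal{P}_q(y)}$, which is exactly the limit that Lemma~\ref{key} attaches to the average in $I_2$; hence $I_2\to0$ as $q^m\to\infty$, uniformly for $|x|\le R$ and for each fixed $y$. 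Since $\widetilde{\mathcal{M}}_{\sigma,\mathcal{P}_q(y)}$ is the Fourier transform of $\mathcal{M}_{\sigma,\mathcal{P}_q(y)}$, Proposition~\ref{tildeM} (equivalently Proposition~\ref{M}~(1) together with Fourier inversion) gives $\widetilde{\mathcal{M}}_{\sigma,\mathcal{P}_q(y)}(\mathrm{Sym}^0, x)\to\widetilde{\mathcal{M}}_{\sigma}(\mathrm{Sym}^0, x)$ as $y\to\infty$, uniformly in $x\in\mathbb{R}$, so $I_3(y)\to0$.

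It remains to estimate the truncation error $I_1$. When $\sigma>1$ this is immediate: by \eqref{g_sigma2},
\[
\log L_{\mathbb{P}_q}(\mathrm{Sym}_f^r,\sigma)-\mathscr{G}_{\sigma,\mathcal{P}_q(y)}(\alpha_f^r(\mathcal{P}_q(y)))=\sum_{\substack{p\in\mathbb{P}_q\\ p>y}}\mathscr{G}_{\sigma,p}(\alpha_f^r(p)),
\]
and by \eqref{wp_00} the right-hand side is $\ll\sum_{p>y}p^{-\sigma}$ uniformly in $f$. Since $\psi_x$ is Lipschitz with constant at most $R$ on $|x|\le R$, and by \eqref{P1} the number of forms in each average is $1+O(E(q^m))$, we obtain $I_1\ll_R\bigl(\sum_{p>y}p^{-\sigma}\bigr)\bigl(1+O(E(q^m))\bigr)$, which by \eqref{E2} is $\ll_R\sum_{p>y}p^{-\sigma}$ once $q^m$ is large, and this tends to $0$ as $y\to\infty$. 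Thus for $\sigma>1$ the lemma follows from the usual $\varepsilon/3$ argument: given $\varepsilon>0$, fix $y$ so that $I_1+I_3<\varepsilon/2$ for all sufficiently large $q^m$ (uniformly in $|x|\le R$), then let $q^m\to\infty$ to make $I_2<\varepsilon/2$.

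The case $1/2<\sigma\le1$ is the real obstacle, because then $\sum_{p>y}p^{-\sigma}$ diverges and $I_1$ cannot be made small by a fixed truncation uniformly in $f$; this is where Assumption~(GRH) and the convexity-type bound \eqref{assump_est} are indispensable, and where $\mathcal{P}_q$ must be allowed to depend on $q^m$ as announced in Section~\ref{sec2}. The approach I would take is to let $y=y(q^m)\to\infty$ grow like a small power of $\log q^m$, to express $\log L_{\mathbb{P}_q}(\mathrm{Sym}_f^r,\sigma)$ minus its truncation at $y$ by a Perron/explicit-formula contour integral, to use GRH to move the contour to the left of $\sigma=1/2$ (no zeros, and $L(\mathrm{Sym}_f^r,s)$ is entire by Assumption (Analytical conditions), so no poles), and then to bound the resulting integral after averaging over $f\in S_k(q^m)$ by means of \eqref{P} and \eqref{assump_est}, so that the averaged truncation error tends to $0$. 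Simultaneously one must re-run the proof of Lemma~\ref{key} with $\mathcal{P}_q=\mathcal{P}_q(y(q^m))$ growing with $q^m$, keeping track of how its error term depends on $|\mathcal{P}_q|$ and on the polynomial degrees $M_{\mathcal{P}_q}$, so that $I_1$ and $I_2$ go to $0$ together; combined with $I_3(y(q^m))\to0$ this completes the proof for all $\sigma>1/2$. I expect this averaged truncation estimate under GRH --- carried out in Section~5 --- to be the main technical hurdle; the rest is bookkeeping around Lemma~\ref{key}, Proposition~\ref{M_P} and Proposition~\ref{M}.
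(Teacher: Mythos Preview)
Your decomposition $I_1+I_2+I_3$ and your treatment of the case $\sigma>1$ coincide with the paper's argument (their $S_1+S_2+S_3$). For $1/2<\sigma\le 1$ the paper follows the same three-term split with the specific choice $y=\log q^m$ (not a power of it), and bounds the truncation piece via a Duke-type approximation lemma (Lemma~\ref{appSymL}) under GRH that replaces $\log L_{\mathbb{P}_q}-\log L_{\mathcal{P}_q(\log q^m)}$ by an explicit short Dirichlet sum $\mathcal{S}_r$ plus a power-of-$\log$ error; the averaged $\mathcal{S}_r$ is then controlled by Petersson's formula. This is close in spirit to your contour-integral sketch.

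The one place where your plan diverges from the paper is the middle term. You propose to ``re-run the proof of Lemma~\ref{key}'' with $\mathcal{P}_q=\mathcal{P}_q(y(q^m))$ growing, tracking the dependence on $|\mathcal{P}_q|$ and on the polynomial degrees $M_{\mathcal{P}_q}$. The paper does \emph{not} do this: Lemma~\ref{key} is a soft $\varepsilon$-argument whose implied constants blow up with $|\mathcal{P}_q|$, and making it quantitative would be awkward. Instead the paper discards the polynomial truncation entirely and uses the exact Taylor expansion $\psi_x(g_{\sigma,p}(t))=\sum_{a\ge 0}G_a(p,x)t^a$, computes both the averaged sum and $\widetilde{\mathcal{M}}_{\sigma,\mathcal{P}_q(\log q^m)}$ explicitly in these coefficients, and shows the difference is a sum over nontrivial $\lambda_f(n)$ weighted by products of $G_a$'s; Petersson's formula \eqref{P} and the pointwise bounds $|G_a(p,x)|\le p^{-a\sigma}G_a(|x|)$ then give a quantitative error that tends to $0$. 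So your outline is sound, but the efficient route for $I_2$ is an explicit coefficient computation rather than a quantified version of Lemma~\ref{key}.
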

We will prove this lemma in the next section.
We note that this lemma is actually the special case $\Psi=\psi_x$ of our main
Theorem~\ref{main1}.
To show this lemma is the core of the proof of Theorem~\ref{main1}.

\begin{proof}[Proof of Theorem \ref{main1} and Remark \ref{abs-conv-case}]
From Lemma \ref{keylemma},
we can obtain Theorem~\ref{main1} by the same
argument as in \cite[Section~7]{mu} (or in \cite[Section~9]{im_2011}), 
with corrections presented in \cite{matsumoto}.
We omit the details here.
We only note that we use \eqref{tildeTrivial} in the argument.

The assertion in Remark \ref{abs-conv-case} can be deduced easily from
Remark \ref{M-p-supp} as follows. 
First note that, by \eqref{g_sigma}, the support of 
$\log L_{\mathcal{P}_q}(\mathrm{Sym}_f^r,\sigma)$ is included in
$C_{\sigma}(\mathcal{P}_q)$.
If $\sigma>1$, $C_{\sigma}(\mathcal{P}_q)$ remains
bounded when $|\mathcal{P}_q|\to\infty$.   
Let $B_{\sigma}$ be the closure of the union of all 
$C_{\sigma}(\mathcal{P}_q)$, where $\mathcal{P}_q$ runs over all possible
finite subset of $\mathbb{P}_q$.   This is surely a compact set.
By Remark \ref{abs-conv-case} we see that
$\mathrm{Supp}(\mathcal{M}_{\sigma,\mathcal{P}_q})\subset B_{\sigma}$
for any $\mathcal{P}_q$, hence 
$\mathrm{Supp}(\mathcal{M}_{\sigma})\subset B_{\sigma}$.
Therefore $\mathrm{Supp}(\mathcal{M}_{\sigma})$ is compact.

Let $\Psi:\mathbb{R}\to\mathbb{C}$ be a continuous function.   Define
a bounded continuous $\Psi_0$ satisfying $\Psi_0(x)=\Psi(x)$ for 
$x\in B_{\sigma}$ and $\Psi_0(x)=0$ if $|x|$ is sufficiently large.
Then by Theorem \ref{main1}, formula \eqref{main-formula} is valid for
$\Psi=\Psi_0$.   However, since $\Psi_0=\Psi$ on $B_{\sigma}$, we can
replace $\Psi_0$ by $\Psi$ on the both sides of \eqref{main-formula}.
Therefore the condition ``bounded continuous'' in
Theorem \ref{main1} can be relaxed to ``continuous''.
\end{proof}

%
%
\section{Proof of Lemma~\ref{keylemma}}\label{sec5}
We begin with the case $\sigma>1$.

\begin{proof}[Proof of Lemma \ref{keylemma} in the case $\sigma >1$]
Since $\sigma >1$, we can find a sufficiently large finite subset 
$\mathcal{P}_q \subset \mathbb{P}_q$
for which it holds that
\[
|L_{\mathbb{P}_q}(\mathrm{Sym}_f^r, \sigma)
-
L_{\mathcal{P}_q}(\mathrm{Sym}_f^r, \sigma)
|<\varepsilon
\]
and
$|\widetilde{\mathcal{M}}_{\sigma, \mathcal{P}_q}(\mathrm{Sym}^0, x)-\widetilde{\mathcal{M}}_{\sigma}(\mathrm{Sym}^0, x)|<\varepsilon$
for any $x\in\mathbb{R}$ and any $\varepsilon>0$.
The last inequality is provided by Proposition~\ref{tildeM}.
Using this $\mathcal{P}_q$, we have
\begin{align*}
&
\bigg|\sum_{f\in S_k(q^m)}^{\qquad\prime}
\psi_x (\log L_{\mathbb{P}_q}(\mathrm{Sym}_f^r, \sigma))
-
\int_{\mathbb{R}}\mathcal{M}_{\sigma}(\mathrm{Sym}^0, u)
\psi_x(u) \frac{du}{\sqrt{2\pi}}
\bigg|
\\
\leq &
\bigg|\sum_{f\in S_k(q^m)}^{\qquad\prime}
\bigg(\psi_x (\log L_{\mathbb{P}_q}(\mathrm{Sym}_f^r, \sigma))
-
\psi_x (\log L_{\mathcal{P}_q}(\mathrm{Sym}_f^r, \sigma))
\bigg)\bigg|
\\
&+
\bigg|\sum_{f\in S_k(q^m)}^{\qquad\prime}
\psi_x (\log L_{\mathcal{P}_q}(\mathrm{Sym}_f^r, \sigma))
-
\int_{\mathbb{R}}\mathcal{M}_{\sigma, \mathcal{P}_q}(\mathrm{Sym}^0, u)
\psi_x(u) \frac{du}{\sqrt{2\pi}}
\bigg|
\\
&
+
\bigg|
\int_{\mathbb{R}}\mathcal{M}_{\sigma, \mathcal{P}_q}(\mathrm{Sym}^0, u)
\psi_x(u) \frac{du}{\sqrt{2\pi}}
-
\int_{\mathbb{R}}\mathcal{M}_{\sigma}(\mathrm{Sym}^0, u)
\psi_x(u) \frac{du}{\sqrt{2\pi}}
\bigg|
\\
&
=S_1+S_2+S_3,
\end{align*}
say.   We remind the relation 
\begin{equation}\label{ihara}
|\psi_x(u)-\psi_x(u')|
\ll |x|\cdot |u-u'|
\end{equation}
for $u\in \mathbb{R}$ (see Ihara~\cite[(6.5.19)]{ihara} or Ihara-Matsumoto~\cite{im_2011}).
We see that
\begin{align*}
S_1 \ll \; & |x| 
\sum_{f\in S_k(q^m)}^{\qquad\prime}
\bigg(
|\log L_{\mathbb{P}_q}(\mathrm{Sym}_f^r, \sigma)
-
\log L_{\mathcal{P}_q}(\mathrm{Sym}_f^rf, \sigma)|
\bigg)
\end{align*}
and
\[
S_3
=
\left|
\widetilde{\mathcal{M}}_{\sigma,\mathcal{P}_q}(\mathrm{Sym}^0, x)
-
\widetilde{\mathcal{M}}_{\sigma}(\mathrm{Sym}^0, x)
\right|.
\]
Therefore $|S_1|$ and $|S_3|$ are $O(\varepsilon)$
for large $|\mathcal{P}_q|$, with the implied constant depending on $R$.
As for the estimate on $|S_2|$,
we use Proposition~\ref{M_P} and Lemma~\ref{key},
whose convergence is uniform on $|x|\leq R$.
This completes the proof.
\end{proof}

%
\par
Now we proceed to the proof in the more difficult case
$1\geq\sigma>1/2$.
Let
$$
F(\mathrm{Sym}_f^r,s)=\frac{L_{\mathbb{P}_q}(\mathrm{Sym}_f^r, s)}
{L_{\mathcal{P}_q(\log q^m)}(\mathrm{Sym}_f^rf, s)}.
$$
Under Assumption (Analytical conditions), for $2\geq \sigma>1/2$,
we have
\begin{align}\label{10-5-7-anal}
\frac{F^{\prime}}{F}(\mathrm{Sym}_f^r,s)
\ll
\log q^m+(\log(1+|t|))
\end{align}
analogously to \cite[(5.7)]{mu}.
%
In \cite{mu}, we used the assumption $q>Q(\mu)$ (where $\mu$ is a positive integer
and $Q(\mu)$ is the smallest prime satisfying $2^{\mu}/\sqrt{Q(\mu)}<1$), but this type
of assumption is not necessary here because of \eqref{local-factor}. 
From \eqref{10-5-7-anal} we can deduce the following lemma, which is an analogue of
\cite[Lemma~5.1 and Remark~5.2]{mu}, whose basic idea is due to Duke
\cite{d} (Assumption (GRH) is necessary here).
%
%
\begin{lem}\label{appSymL}
Suppose Assumptions (Analytical conditions) and (GRH).
Let $f$ be a primitive form in $S_k(q^m)$,
where $q$ is a prime number.
For fixed
$r\geq 1$ and
$\sigma=1/2 + \delta$ $(0< \delta \leq 1/2)$, 
we have
\begin{align}\label{F}
&
\log L_{\mathbb{P}_q}(\mathrm{Sym}_f^r, \sigma)
- 
\log L_{\mathcal{P}_q(\log q^m)}(\mathrm{Sym}_f^r, \sigma)
-
\mathcal{S}_r
\nonumber\\
\ll_r&
\frac{1}{\delta (\log q^m)^{2\delta}}+(\log q^m)^{-1/2}
+(q^{m/4(k-1)r})^{-\delta/2},
\end{align}
where
\[
\mathcal{S}_r=
\sum_{ p\in \mathbb{P}_q\setminus \mathcal{P}_q(\log q^m)}
\frac{\lambda_f( p^r)}{ p^{\sigma}} e^{- p/q^{m/(k-1)r}}.
\]
\end{lem}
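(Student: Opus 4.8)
The plan is to follow \cite[Lemma~5.1 and Remark~5.2]{mu}, whose analytic core is due to Duke \cite{d}, with \eqref{10-5-7-anal} now playing the role of \cite[(5.7)]{mu} and \eqref{local-factor} making the auxiliary hypothesis ``$q>Q(\mu)$'' of \cite{mu} superfluous. Write $Y=q^{m/(k-1)r}$, recall $\log F(\mathrm{Sym}_f^r,s)=\log L_{\mathbb{P}_q}(\mathrm{Sym}_f^r,s)-\log L_{\mathcal{P}_q(\log q^m)}(\mathrm{Sym}_f^r,s)$ (the function whose log-derivative is bounded in \eqref{10-5-7-anal}), and let $p$ range over $\mathbb{P}_q$. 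First I would expand, for $\Re s>1$,
\[
\log F(\mathrm{Sym}_f^r,s)=\sum_{\substack{p\in\mathbb{P}_q\\ p>\log q^m}}\ \sum_{j=1}^{\infty}\frac{c_f(p^j)}{p^{js}},\qquad
c_f(p^j)=\frac1j\sum_{h=0}^{r}\bigl(\alpha_f^{r-h}(p)\beta_f^h(p)\bigr)^{j},
\]
so that $|c_f(p^j)|\le(r+1)/j$ and, crucially, $c_f(p)=\sum_{h=0}^{r}\alpha_f^{r-h}(p)\beta_f^h(p)=\lambda_f(p^r)$ by \eqref{euler} (valid since $p\nmid q^m$); thus the $j=1$ coefficients of $\log F(\mathrm{Sym}_f^r,s)$ are exactly those of $\mathcal{S}_r$ before smoothing. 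The task therefore splits into (i) inserting the weight $e^{-p/Y}$ at a controlled cost and (ii) discarding the terms with $j\ge 2$.

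For (i) I would use $e^{-y}=\frac{1}{2\pi i}\int_{(c)}\Gamma(w)y^{-w}\,dw$ ($c>0$), which gives (termwise, legitimately at $c=1$ where everything converges absolutely)
\[
\sum_{p>\log q^m}\ \sum_{j\ge 1}\frac{c_f(p^j)}{p^{j\sigma}}e^{-p^{j}/Y}
=\frac{1}{2\pi i}\int_{(1)}\log F(\mathrm{Sym}_f^r,\sigma+w)\,\Gamma(w)\,Y^{w}\,dw .
\]
Under Assumption (GRH) together with \eqref{local-factor}, $L(\mathrm{Sym}_f^r,s)$, hence also $F(\mathrm{Sym}_f^r,s)$, is holomorphic and non-vanishing for $\Re s>1/2$, and (since $F\to1$ at infinity) $w\mapsto\log F(\mathrm{Sym}_f^r,\sigma+w)$ is holomorphic and single-valued for $\Re w>-\delta$; as $\Gamma(w)$ decays exponentially in vertical strips I may move the contour to $\Re w=-\delta/2$, crossing only the simple pole of $\Gamma$ at $w=0$, whose residue is $\log F(\mathrm{Sym}_f^r,\sigma)$. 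This yields
\[
\log F(\mathrm{Sym}_f^r,\sigma)-\mathcal{S}_r
=\sum_{p>\log q^m}\sum_{j\ge 2}\frac{c_f(p^j)}{p^{j\sigma}}e^{-p^{j}/Y}
-\frac{1}{2\pi i}\int_{(-\delta/2)}\log F(\mathrm{Sym}_f^r,\sigma+w)\,\Gamma(w)\,Y^{w}\,dw .
\]

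It remains to bound the two terms on the right, which is where the three error terms of \eqref{F} come from. In the first term the $j=2$ part is $\ll_r\sum_{p>\log q^m}p^{-2\sigma}\ll_r\delta^{-1}(\log q^m)^{-2\delta}$, and the $j\ge 3$ part is $\ll_r\sum_{p>\log q^m}p^{-3\sigma}\ll_r(\log q^m)^{1-3\sigma}\ll(\log q^m)^{-1/2}$ (here $3\sigma-1\ge\tfrac12$, so there is no $\delta$-loss); these give the first two error terms. For the contour integral, on $\Re w=-\delta/2$ I bound $|\log F(\mathrm{Sym}_f^r,\sigma-\tfrac{\delta}{2}+iv)|\le|\log F(\mathrm{Sym}_f^r,2+iv)|+\int_{\sigma-\delta/2}^{2}|\tfrac{F'}{F}(\mathrm{Sym}_f^r,u+iv)|\,du\ll_r\log q^m+\log(2+|v|)$, which is allowed because $1/2<\sigma-\delta/2\le 2$, so that \eqref{10-5-7-anal} applies on the whole segment; combined with the exponential decay of $\Gamma(-\tfrac{\delta}{2}+iv)$ and with $|Y^{w}|=Y^{-\delta/2}$ this yields $\ll_{r,\delta}Y^{-\delta/2}\log q^m\ll(q^{m/4(k-1)r})^{-\delta/2}$, the factor $\tfrac14$ in the exponent being spent to absorb the $\log q^m$ and the $\delta$-dependent constant from $\Gamma(-\delta/2)$. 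Summing, \eqref{F} follows.

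The step I expect to be the main obstacle is the control of $\log F(\mathrm{Sym}_f^r,s)$ on vertical lines with $1/2<\Re s<1$: this is precisely the point where Assumption (GRH) is indispensable — both to define the branch of $\log F$ and to license the contour shift past $\Re w=0$ — and where the polynomial bound \eqref{10-5-7-anal} (equivalently \eqref{assump_est}) is needed. Everything else, namely the Hecke identity for $c_f(p)$, the Mellin inversion, and the bookkeeping around the choice $\delta'=\delta/2$ and the slightly lossy smoothing scale, is routine.
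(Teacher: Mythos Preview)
Your proposal is correct and follows essentially the same route as the paper. The paper does not spell out a proof of this lemma at all: it merely asserts that the lemma is an analogue of \cite[Lemma~5.1 and Remark~5.2]{mu} (whose analytic core is Duke's method \cite{d}), to be proved by replacing \cite[(5.7)]{mu} with \eqref{10-5-7-anal} and observing that \eqref{local-factor} renders the auxiliary hypothesis $q>Q(\mu)$ of \cite{mu} unnecessary. This is exactly what you do --- Mellin-smoothing by $\Gamma(w)Y^w$, shifting the contour to $\Re w=-\delta/2$ (legitimate under GRH), picking up $\log F$ as the residue at $w=0$, identifying the $j=1$ coefficients with $\lambda_f(p^r)$ via \eqref{euler}, and bounding the shifted integral via \eqref{10-5-7-anal} --- so your argument matches the intended one in every respect.
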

Now let $r=1, 2$ (i.e. $\rho=0$).
Because of \eqref{prop-3-pr-4}, 
the desired assertion is the claim that
\begin{equation}\label{prime}
\bigg|
\sum_{f\in S_k(q^m)}^{\qquad\prime}
\psi_x(\log L_{\mathbb{P}_q}(\mathrm{Sym}_f^r, \sigma))
-
\widetilde{\mathcal{M}}_{\sigma}(\mathrm{Sym}^0, x)
\bigg|
\end{equation}
tends to $0$ as $q^m\to \infty$ for fixed
$q$ or fixed $m$, when $1\geq \sigma > 1/2$.
\par
First, using \eqref{ihara}, we can see the following inequality:
\begin{align}\label{basic}
&
\bigg|
\sum_{f\in S_k(q^m)}^{\qquad\prime}
\psi_x(\log L_{\mathbb{P}_q}(\mathrm{Sym}_f^r, \sigma)) 
-
\widetilde{\mathcal{M}}_{\sigma}(\mathrm{Sym}^0, x)
\bigg|
\nonumber\\
\leq&
\bigg|
\sum_{f\in S_k(q^m)}^{\qquad\prime}
\bigg(
\psi_x(\log L_{\mathbb{P}_q}(\mathrm{Sym}_f^r, \sigma)) 
-
\psi_x(\log L_{\mathcal{P}_q(\log q^m)}(\mathrm{Sym}_f^r, \sigma))
\bigg)
\bigg|
\nonumber\\
&+
\bigg|
\sum_{f\in S_k(q^m)}^{\qquad\prime}
\psi_x(\log L_{\mathcal{P}_q(\log q^m)}(\mathrm{Sym}_f^r, \sigma))
-
\widetilde{\mathcal{M}}_{\sigma, \mathcal{P}_q(\log q^m)}(\mathrm{Sym}^0, x)
\bigg|
\nonumber\\
&+
\left|
\widetilde{\mathcal{M}}_{\sigma, \mathcal{P}_q(\log q^m)}(\mathrm{Sym}^0, x)
-
\widetilde{\mathcal{M}}_{\sigma}(\mathrm{Sym}^0, x)
\right|
\nonumber\\
\ll &
\sum_{f\in S_k(q^m)}^{\qquad\prime}
|x| \bigg(\big|\log L_{\mathbb{P}_q}(\mathrm{Sym}_f^r, \sigma)
- \log L_{\mathcal{P}_q(\log q^m)}(\mathrm{Sym}_f^r, \sigma)
-\mathcal{S}_r\big| +\big|\mathcal{S}_r\big|
\nonumber\\
&+
\bigg|
\sum_{f\in S_k(q^m)}^{\qquad\prime}
\psi_x(\log L_{\mathcal{P}_q(\log q^m)}(\mathrm{Sym}_f^r, \sigma))
-
\widetilde{\mathcal{M}}_{\sigma, \mathcal{P}_q(\log q^m)}(\mathrm{Sym}^0, x)
\bigg|
\nonumber\\
&+
\left|
\widetilde{\mathcal{M}}_{\sigma, \mathcal{P}_q(\log q^m)}(\mathrm{Sym}^0, x)
-
\widetilde{\mathcal{M}}_{\sigma}(\mathrm{Sym}^0, x)
\right|,
\end{align}
this sum being denoted by
\begin{align*}
\mathcal{X}_{\log q^m}+\mathcal{Y}_{\log q^m}
+\mathcal{Z}_{\log q^m},
\end{align*}
say.
From Proposition~\ref{tildeM}, for any $\varepsilon>0$, there exists
a number $N_0=N_0(\varepsilon)$ 
for which
\[
\left|
\widetilde{\mathcal{M}}_{\sigma, \mathcal{P}_q(\log q^m)}(\mathrm{Sym}^0, x)
-
\widetilde{\mathcal{M}}_{\sigma}(\mathrm{Sym}^0, x)
\right|<\varepsilon
\]
holds for any $q^m>N_0$, uniformly in $x\in\mathbb{R}$.
Therefore
\begin{align}\label{lim-Z}
\lim_{q^m\to\infty}\mathcal{Z}_{\log q^m}
=0.
\end{align}
On the estimate of $\mathcal{X}_{\log q^m}$, 
by using \eqref{P1} and \eqref{F},
we find that
\begin{align*}
&
\sum_{f\in S_k(q^m)}^{\qquad\prime}
|x| \bigg(
\big|
\log L_{\mathbb{P}_q}(\mathrm{Sym}_f^r, \sigma)
- \log L_{\mathcal{P}_q(\log q^m)}(\mathrm{Sym}_f^r, \sigma)
-\mathcal{S}_r\big|
\bigg)
\to 0
\end{align*}
as $q^m$ tends to $\infty$, uniformly in $|x|\leq R$.
Next, from \cite[(6.4)]{mu}, we have
\begin{equation}\label{X_{log}}
\sum_{f\in S_k(q^m)}^{\qquad\prime} \big|\mathcal{S}_r\big|
\ll 
(\log q^m)^{-\delta/2}.
\end{equation}
Hence we see that
\begin{align}\label{lim-X}
\lim_{q^m\to\infty} \mathcal{X}_{\log q^m}
=
0
\end{align}
uniformly in $|x|\leq R$.
\par
The remaining part of this section is devoted to the estimate of 
$\mathcal{Y}_{\log q^m}$.
According to the method \cite{im_2011} and \cite{mu},
we begin with the Taylor expansion
\[
\psi_x(g_{\sigma,  p}(t_p))=\exp(ixg_{\sigma,  p}(t_p))
=1+\sum_{n=1}^{\infty}\frac{(ix)^n}{n!}g_{\sigma,  p}^n(t_p),
\]
where
\begin{align*}
g_{\sigma,  p}^n(t_p)
=&
\left(-\log(1-t_p p^{-\sigma})\right)^n
= 
\left(\sum_{j=1}^{\infty}\frac{1}{j}\left(\frac{t_p}{ p^{\sigma}}\right)^j\right)^n
\\
=&
\sum_{a=1}^{\infty} \bigg(\sum_{\substack{a=j_1+\ldots+j_n\\ j_{\ell} \geq 1}}\frac{1}{j_1j_2\cdots j_n}\bigg)
\left(\frac{t_p}{ p^{\sigma}}\right)^a.
\end{align*}
Hence
\begin{align*}
\psi_x(g_{\sigma,  p}(t_p))
= &
1+\sum_{n=1}^{\infty}\frac{(ix)^n}{n!}
\sum_{a=1}^{\infty} \bigg(\sum_{\substack{a=j_1+\ldots+j_n\\ j_{\ell} \geq 1}}\frac{1}{j_1j_2\cdots j_n}\bigg)
\left(\frac{t_p}{ p^{\sigma}}\right)^a\\
=&1+
\sum_{a=1}^{\infty} 
\sum_{n=1}^a\frac{(ix)^n}{n!}\bigg(\sum_{\substack{a=j_1+\ldots+j_n\\ j_{\ell} \geq 1}}\frac{1}{j_1j_2\cdots j_n}\bigg)
\left(\frac{t_p}{ p^{\sigma}}\right)^a,
\end{align*}
which we can write as
\begin{align}\label{G-exp}
\psi_x(g_{\sigma,  p}(t_p))=\sum_{a=0}^{\infty}G_a( p, x) t_p^a
\end{align}
with
\begin{align*}
G_a( p, x)
=& \begin{cases}
       1 & a=0,\\
       \displaystyle{\frac{1}{ p^{a\sigma}}\sum_{n=1}^a\frac{(ix)^n}{n!}\bigg(\sum_{\substack{a=j_1+\ldots+j_n\\ j_{\ell} \geq 1}}\frac{1}{j_1j_2\cdots j_n}\bigg)} 
         & a\geq 1.      
       \end{cases}
\end{align*}
Define
\begin{align*}
G_a(x)
=&
 \begin{cases}
       1 & a=0,\\
       \displaystyle\sum_{n=1}^a\frac{x^n}{n!}
       \left(\begin{matrix}a-1\\n-1\end{matrix}\right)
         & a\geq 1.      
       \end{cases}
\end{align*}
This notation is the same as \cite[(63)]{im_2011}.
We have
\begin{equation}\label{G_a}
|G_a( p, x)|
\leq
\frac{1}{ p^{a\sigma}}G_a(|x|)
\end{equation}(see \cite[(65)]{im_2011}).
From \eqref{g_sigma} and \eqref{G-exp}, we find 
(using the convention $\lambda_f(p^n)=0$ for $n<0$) that
\begin{align}\label{daiji}
&
\sum_{f\in S_k(q^m)}^{\qquad\prime}
\psi_x(\log L_{\mathcal{P}_q(\log q^m)}(\mathrm{Sym}_f^r, \sigma))
=
\sum_{f\in S_k(q^m)}^{\qquad\prime}
\prod_{ p \in \mathcal{P}_q(\log q^m)}
\psi_x (\mathscr{G}_{\sigma,  p}(\alpha_f^r( p)))
\nonumber\\
=&
\sum_{f\in S_k(q^m)}^{\qquad\prime}
\prod_{ p \in \mathcal{P}_q(\log q^m)}
\psi_x (g_{\sigma,  p}(\alpha_f^r(p)))\psi_x(g_{\sigma,  p}(\beta_f^r( p)))
\psi_x(g_{\sigma,  p}(\delta_{r,\text{even}}))
\nonumber\\
=&
\sum_{f\in S_k(q^m)}^{\qquad\prime}
\prod_{ p \in \mathcal{P}_q(\log q^m)}
\Big(\sum_{a_p=0}^{\infty}G_{a_p}( p, x) 
\alpha_f^{a_p r}( p)
\Big)
\Big(\sum_{b_p=0}^{\infty}G_{b_p}( p, x) 
\beta_f^{b_p r}( p)
\Big)
\Big(\sum_{c_p=0}^{\infty}G_{c_p}( p, x) 
\delta_{r,\text{even}}^{c_p}
\Big)
\nonumber\\      
=&
\sum_{f\in S_k(q^m)}^{\qquad\prime}
\prod_{ p \in \mathcal{P}_q(\log q^m)}
\Big(\sum_{c_p=0}^{\infty}G_{c_p}( p, x) 
\delta_{r,\text{even}}^{c_p}
\Big)
\Big(
\sum_{a_p=0}^{\infty} G_{a_p}^2( p, x) 
\nonumber\\
&
+ 
\sum_{0\leq b_p <  a_p}
G_{a_p}( p, x) G_{b_p}( p, x) 
\alpha_f^{a_p r}( p) \beta_f^{b_p r}( p)
\nonumber\\
&
+ 
\sum_{0\leq a_p < b_p} 
G_{a_p}( p, x) G_{b_p}( p, x) 
\alpha_f^{a_p r}( p) \beta_f^{b_p r}( p)
\Big)
\nonumber\\      
=&
\sum_{f\in S_k(q^m)}^{\qquad\prime}
\prod_{ p \in \mathcal{P}_q(\log q^m)}
\Big(\sum_{c_p=0}^{\infty}G_{c_p}( p, x) 
\delta_{r,\text{even}}^{c_p}
\Big)
\Big(
\sum_{a_p=0}^{\infty} G_{a_p}^2( p, x) 
\nonumber\\
&
+ 
\sum_{0\leq b_p}
\sum_{\nu_p=1}^{\infty}
G_{b_p+\nu_p}( p, x) G_{b_p}( p, x) 
\alpha_f^{\nu_p r}( p)
\nonumber\\
&
+ 
\sum_{0\leq a_p}
\sum_{\nu_p=1}^{\infty}
G_{a_p}( p, x) G_{a_p+\nu_p}( p, x) 
\alpha_f^{-\nu_p r}( p) 
\Big)
\nonumber\\      
=&
\sum_{f\in S_k(q^m)}^{\qquad\prime}
\prod_{ p \in \mathcal{P}_q(\log q^m)}
\Big(\sum_{c_p=0}^{\infty}G_{c_p}( p, x) 
\delta_{r,\text{even}}^{c_p}
\Big)
\Big(
\sum_{a_p=0}^{\infty} G_{a_p}^2( p, x)
\nonumber\\
&
+ 
\sum_{0\leq b_p}
\sum_{\nu_p=1}^{\infty}
G_{b_p+\nu_p}( p, x) G_{b_p}( p, x) 
(\alpha_f^{\nu_p r}( p)+ \alpha_f^{-\nu_p r}( p))
\Big)
\nonumber\\
=&
\sum_{f\in S_k(q^m)}^{\qquad\prime}
\prod_{ p \in \mathcal{P}_q(\log q^m)}
\Big(\sum_{c_p=0}^{\infty}G_{c_p}( p, x) 
\delta_{r,\text{even}}^{c_p}
\Big)
\nonumber\\
&
\times\bigg(
\sum_{\nu_p=0}^{\infty}
(\lambda_f(p^{\nu_p r})- \lambda_f(p^{\nu_p r-2}))
\Big(
\sum_{0\leq b_p}G_{b_p+\nu_p}( p, x) G_{b_p}( p, x)
\Big)
\bigg)
\nonumber\\
=:&
\sum_{f\in S_k(q^m)}^{\qquad\prime}
\prod_{ p \in \mathcal{P}_q(\log q^m)}
\Big(\sum_{c_p=0}^{\infty}G_{c_p}( p, x) 
\delta_{r,\text{even}}^{c_p}
\Big)
\nonumber\\
&
\times\bigg(
\sum_{\nu_p=0}^{\infty}
(\lambda_f(p^{\nu_p r})- \lambda_f(p^{\nu_p r-2}))
G_{p, x}(\nu_p)
\bigg),
\end{align}
where
\[
G_{p, x}(\nu_p)
=\sum_{0\leq b_p}G_{b_p+\nu_p}(p,x)G_{b_p}(p,x).
\]
For $r=1$, the right-hand side of \eqref{daiji} is
\begin{align}\label{daiji_r=1}
=&
\sum_{f\in S_k(q^m)}^{\qquad\prime}
\prod_{ p \in \mathcal{P}_q(\log q^m)}
\nonumber\\
&
\bigg(
G_{p, x}(0)
+
\lambda_f(p)
G_{p, x}(1)
+
(\lambda_f(p^{2})- 1)
G_{p, x}(2)
\nonumber\\
&+
\sum_{\nu_p=3}^{\infty}
(\lambda_f(p^{\nu_p})- \lambda_f(p^{\nu_p-2}))
G_{p, x}(\nu_p)
\bigg)
\nonumber\\
=&
\sum_{f\in S_k(q^m)}^{\qquad\prime}
\prod_{ p \in \mathcal{P}_q(\log q^m)}
\nonumber\\
&
\bigg(
(G_{p, x}(0)-G_{p, x}(2))
+
\sum_{\nu_p=1}^{\infty}
\lambda_f(p^{\nu_p})
G_{p, x}(\nu_p)
-
\sum_{\nu_p=1}^{\infty}
\lambda_f(p^{\nu_p})
G_{p, x}(\nu_p+2)
\bigg)
\nonumber\\
=&
\sum_{f\in S_k(q^m)}^{\qquad\prime}
\prod_{ p \in \mathcal{P}_q(\log q^m)}
\bigg(
\sum_{\nu_p=0}^{\infty}
\lambda_f(p^{\nu_p})
\Big(
G_{p, x}(\nu_p)
-
G_{p, x}(\nu_p+2)
\Big)
\bigg).
\end{align}
For $r=2$, the right-hand side of \eqref{daiji} is
\begin{align}\label{daiji_r=2}
=&
\sum_{f\in S_k(q^m)}^{\qquad\prime}
\prod_{ p \in \mathcal{P}_q(\log q^m)}
\Big(\sum_{c_p=0}^{\infty}G_{c_p}( p, x) 
\Big)
\nonumber\\
&
\bigg(
G_{p, x}(0)
+
(\lambda_f(p^{2})- 1)
G_{p, x}(1)
+
\sum_{\nu_p=2}^{\infty}
(\lambda_f(p^{2\nu_p})- \lambda_f(p^{2\nu_p-2}))
G_{p, x}(\nu_p)
\bigg)
\nonumber\\
=&
\sum_{f\in S_k(q^m)}^{\qquad\prime}
\prod_{ p \in \mathcal{P}_q(\log q^m)}
\Big(\sum_{c_p=0}^{\infty}G_{c_p}( p, x) 
\Big)
\nonumber\\
&
\bigg(
\Big(
G_{p, x}(0)-G_{p, x}(1)
\Big)
+
\sum_{\nu_p=1}^{\infty}
\lambda_f(p^{2\nu_p})
G_{p, x}(\nu_p)
-
\sum_{\nu_p=1}^{\infty}
\lambda_f(p^{2\nu_p}))
G_{p, x}(\nu_p+1)
\bigg)
\nonumber\\
=&
\sum_{f\in S_k(q^m)}^{\qquad\prime}
\prod_{ p \in \mathcal{P}_q(\log q^m)}
\Big(\sum_{c_p=0}^{\infty}G_{c_p}( p, x) 
\Big)
\bigg(
\sum_{\nu_p=0}^{\infty}
\lambda_f(p^{2\nu_p})
\Big(
G_{p, x}(\nu_p)
-
G_{p, x}(\nu_p+1)
\Big)
\bigg).
\end{align}
Let
\begin{align*}
  G_p(\text{main}, x)
  =&
  G_{p, x}(0)-G_{p, x}(\diamond)
  =
\begin{cases}
  G_{p, x}(0)-G_{p, x}(2)& r=1\\
  G_{p, x}(0)-G_{p, x}(1)& r=2,
\end{cases}
\\
G_{f,p}(\text{error}, x)
=&
\sum_{\nu_p=1}^{\infty}\lambda_f(p^{\nu_p r})
(G_{p, x}(\nu_p)-G_{p, x}(\nu_p+\diamond))
\\
=&
\begin{cases}
  \displaystyle \sum_{\nu_p=1}^{\infty}\lambda_f(p^{\nu_p})(G_{p, x}(\nu_p)-G_{p, x}(\nu_p+2)) & r=1\\
  \displaystyle \sum_{\nu_p=1}^{\infty}\lambda_f(p^{2\nu_p})(G_{p, x}(\nu_p)-G_{p, x}(\nu_p+1)) & r=2.
\end{cases}
\end{align*}
Write $\mathcal{P}_q(\log q^m)=\{p_1, p_2, \ldots, p_L\}$, where
$p_{\ell}$ means the $\ell$th prime number.
From \eqref{daiji}, \eqref{daiji_r=1}, \eqref{daiji_r=2} and \eqref{P1}
we have
\begin{align}\label{daiji12}
&\sum_{f\in S_k(q^m)}^{\qquad\prime}
  \psi_x(\log L_{\mathcal{P}_q(\log q^m)}(\mathrm{Sym}_f^r, \sigma))
\nonumber\\
=&
\sum_{f\in S_k(q^m)}^{\qquad\prime}
\prod_{ p \in \mathcal{P}_q(\log q^m)}
\Big(\sum_{c_p=0}^{\infty}G_{c_p}( p, x) 
\delta_{r,\text{even}}^{c_p}
\Big)
(G_p(\text{main},x) + G_{f,p}(\text{error},x))
\nonumber\\
=&
\bigg(
\prod_{ p \in \mathcal{P}_q(\log q^m)}
\Big(\sum_{c_p=0}^{\infty}G_{c_p}( p, x) 
\delta_{r,\text{even}}^{c_p}
\Big)
\bigg)
\sum_{f\in S_k(q^m)}^{\qquad\prime}
\bigg(
\prod_{ p \in \mathcal{P}_q(\log q^m)}
G_p(\text{main},x)
\nonumber\\
&
+
\sum_{\substack{(j_0,\ldots,j_L)\neq (0,\ldots,0)\\j_{\ell}\in\{0,1\}}}
\prod_{\ell=1}^L
G_{p_{\ell}}^{1-j_{\ell}}(\text{main},x)
\prod_{\ell=1}^L
G_{f,p_{\ell}}^{j_{\ell}}(\text{error},x)
\bigg)
\nonumber\\
=&
\bigg(
\prod_{ p \in \mathcal{P}_q(\log q^m)}
\Big(\sum_{c_p=0}^{\infty}G_{c_p}( p, x) 
\delta_{r,\text{even}}^{c_p}
\Big)
\bigg)
\nonumber\\
&
\times
\bigg(
\prod_{ p \in \mathcal{P}_q(\log q^m)}
G_p(\text{main},x)
+
E(q^m)
\prod_{ p \in \mathcal{P}_q(\log q^m)}
G_p(\text{main},x)
\nonumber\\
&
+
\sum_{f\in S_k(q^m)}^{\qquad\prime}
\sum_{\substack{(j_0,\ldots,j_L)\neq (0,\ldots,0)\\j_{\ell}\in\{0,1\}}}
\prod_{\ell=1}^L
G_{p_{\ell}}^{1-j_{\ell}}(\text{main},x)
\prod_{\ell=1}^L
G_{f,p_{\ell}}^{j_{\ell}}(\text{error},x)
\bigg).
\end{align}
On the other hand, from \eqref{FFourier-transf},
Proposition~\ref{M_P} and \eqref{G-exp}, 
\begin{align}\label{Daiji}
&
\widetilde{\mathcal{M}}_{\sigma, \mathcal{P}_q(\log q^m)}(\mathrm{Sym}^0, x)
=
\prod_{p \in \mathcal{P}_q(\log q^m)}
\widetilde{\mathcal{M}}_{\sigma, p}(\mathrm{Sym}^0, x)
\nonumber\\
=&
\prod_{p \in \mathcal{P}_q(\log q^m)}
\int_{\mathbb{R}}
\mathcal{M}_{\sigma, p}(\mathrm{Sym}^0, u)
\psi_x(u) du
\nonumber\\
=&
\prod_{p \in \mathcal{P}_q(\log q^m)}
\int_{\mathcal{T}}
\psi_x\big(\mathscr{G}_{\sigma, p}(t_p^r))\big)
\bigg(\frac{t_p^2 -2 + t_p^{-2}}{-2}\bigg)
\frac{dt_p}{2\pi i t_p}
\nonumber\\
=&
\prod_{ p\in \mathcal{P}_q(\log q^m)}
\bigg(
\int_{\mathcal{T}}
\psi_x\big(g_{\sigma, p}( t_p^r)\big)
\psi_x\big(g_{\sigma, p}( t_p^{-r})\big)
\psi_x\big(g_{\sigma, p}(\delta_{r,\text{even}})\big)
\bigg(\frac{t_p^2 -2 + t_p^{-2}}{-2}\bigg)
\frac{dt_p}{2\pi i t_p}
\bigg)
\nonumber\\
=&
\prod_{ p\in \mathcal{P}_q(\log q^m)}
\int_{\mathcal{T}}
\Big(\sum_{a_p=0}^{\infty}G_{a_p}( p, x) t_p^{a_p r} \Big)
\Big(\sum_{b_p=0}^{\infty}G_{b_p}( p, x) t_p^{-b_p r}\Big)
\Big(\sum_{c_p=0}^{\infty}G_{c_p}( p, x)\delta_{r,\text{even}}^{c_p}\Big)
\nonumber\\
&\times
\bigg(\frac{t_p^2 -2 + t_p^{-2}}{-2}\bigg)
\frac{dt_p}{2\pi i t_p}
\bigg)
\nonumber\\
=&
\prod_{ p\in \mathcal{P}_q(\log q^m)}
\Big(\sum_{c_p=0}^{\infty}G_{c_p}( p, x)\delta_{r,\text{even}}^{c_p}\Big)
\bigg(
\int_{\mathcal{T}}
\Big(\sum_{a_p=0}^{\infty}G_{a_p}^2( p, x) 
+
\sum_{a_p=0}^{\infty}
\sum_{\substack{b_p=0\\ a_p\neq b_p}}^{\infty}
G_{a_p}( p, x)G_{b_p}( p, x) t_p^{(a_p-b_p)r}
\Big)
\nonumber\\
&\times
\bigg(\frac{t_p^2 -2 + t_p^{-2}}{-2}\bigg)
\frac{dt_p}{2\pi i t_p}
\bigg)
\nonumber\\
=&
\prod_{ p\in \mathcal{P}_q(\log q^m)}
\Big(\sum_{c_p=0}^{\infty}G_{c_p}( p, x)\delta_{r,\text{even}}^{c_p}\Big)
\bigg(
\int_{\mathcal{T}}
\Big(\sum_{a_p=0}^{\infty}G_{a_p}^2( p, x) 
+
\sum_{a_p=0}^{\infty}
\sum_{\nu_p=1}^{\infty}
G_{a_p}( p, x)G_{a_p+\nu_p}( p, x) t_p^{-\nu_p r}
\nonumber\\
&+
\sum_{b_p=0}^{\infty}
\sum_{\nu_p=1}^{\infty}
G_{b_p+\nu_p}( p, x)G_{b_p}( p, x) t_p^{\nu_p r}
\Big)
\bigg(\frac{t_p^2 -2 + t_p^{-2}}{-2}\bigg)
\frac{dt_p}{2\pi i t_p}
\bigg).
\end{align}
For $r=1$, the right-hand side of \eqref{Daiji} is
\begin{align}\label{Daiji_r=1}
=&
\prod_{ p\in \mathcal{P}_q(\log q^m)}
\int_{\mathcal{T}}
\bigg(
\Big(\sum_{a_p=0}^{\infty}G_{a_p}^2( p, x) 
+
\sum_{a_p=0}^{\infty}
\sum_{\nu_p=1}^{\infty}
G_{a_p}( p, x)G_{a_p+\nu_p}( p, x) t_p^{-\nu_p}
\nonumber\\
&+
\sum_{b_p=0}^{\infty}
\sum_{\nu_p=1}^{\infty}
G_{b_p+\nu_p}( p, x)G_{b_p}( p, x) t_p^{\nu_p}
\Big)
\bigg(\frac{t_p^2 -2 + t_p^{-2}}{-2}\bigg)
\frac{dt_p}{2\pi i t_p}
\bigg)
\nonumber\\
=&
\prod_{ p\in \mathcal{P}_q(\log q^m)}
\bigg(
\sum_{a_p=0}^{\infty}G_{a_p}^2( p, x) 
-
\sum_{a_p=0}^{\infty}
G_{a_p}( p, x)G_{a_p+2}( p, x) 
\bigg)
\nonumber\\
=&
\prod_{ p\in \mathcal{P}_q(\log q^m)}
(G_{p, x}(0)-G_{p, x}(2))
=
\prod_{ p\in \mathcal{P}_q(\log q^m)}
G_p(\text{main},x).
\end{align}
For $r=2$, the right-hand side of \eqref{Daiji} is
\begin{align}\label{Daiji_r=2}
=&
\prod_{ p\in \mathcal{P}_q(\log q^m)}
\Big(\sum_{c_p=0}^{\infty}G_{c_p}( p, x)\Big)
\int_T
\bigg(
\Big(\sum_{a_p=0}^{\infty}G_{a_p}^2( p, x)
\nonumber\\
&+
\sum_{a_p=0}^{\infty}
\sum_{\nu_p=1}^{\infty}
G_{a_p}( p, x)G_{a_p+\nu_p}( p, x) t_p^{-2\nu_p}
\nonumber\\
&+
\sum_{b_p=0}^{\infty}
\sum_{\nu_p=1}^{\infty}
G_{b_p+\nu_p}( p, x)G_{b_p}( p, x) t_p^{2\nu_p}
\Big)
\bigg(\frac{t_p^2 -2 + t_p^{-2}}{-2}\bigg)
\frac{dt_p}{2\pi i t_p}
\bigg)
\nonumber\\
=&
\prod_{ p\in \mathcal{P}_q(\log q^m)}
\Big(\sum_{c_p=0}^{\infty}G_{c_p}( p, x)\Big)
\bigg(
\sum_{a_p=0}^{\infty}G_{a_p}^2( p, x) 
-
\sum_{a_p=0}^{\infty}
G_{a_p}( p, x)G_{a_p+1}( p, x)\bigg)
\nonumber\\
=&
\prod_{ p\in \mathcal{P}_q(\log q^m)}
\Big(\sum_{c_p=0}^{\infty}G_{c_p}( p, x)\Big)
(G_{p, x}(0)-G_{p, x}(1))
\nonumber\\
=&
\prod_{ p\in \mathcal{P}_q(\log q^m)}
\Big(\sum_{c_p=0}^{\infty}G_{c_p}( p, x)\Big)
G_p(\text{main}, x).
\end{align}
From \eqref{Daiji}, \eqref{Daiji_r=1} and \eqref{Daiji_r=2}, we obtain
\begin{equation}\label{Daiji12}
\widetilde{\mathcal{M}}_{\sigma, \mathcal{P}_q(\log q^m)}(\mathrm{Sym}^0, x)
=
\prod_{ p\in \mathcal{P}_q(\log q^m)}
\Big(\sum_{c_p=0}^{\infty}G_{c_p}( p, x)\delta_{r,\text{even}}^{c_p}\Big)
G_p(\text{main}, x).
\end{equation}
Since
\[
\bigg|\sum_{c_p=0}^{\infty} G_{c_p}(p, x)\bigg|
=
|\psi_x(g_{\sigma, p}(1))|
=|e^{ix\log(1-p^{-\sigma})}|=1
\]
by \eqref{G-exp}, from
\eqref{daiji12} and \eqref{Daiji12} we now obtain
\begin{align}\label{Y0}
&
\mathcal{Y}_{\mathcal{P}_q(\log q^m)}
=
\prod_{ p\in \mathcal{P}_q(\log q^m)}
\Big|\sum_{c_p=0}^{\infty}G_{c_p}( p, x)\delta_{r,\text{even}}^{c_p}\Big|
\bigg|
E(q^m)
\prod_{ p \in \mathcal{P}_q(\log q^m)}
G_p(\text{main},x)
\nonumber\\
&
+
\sum_{f\in S_k(q^m)}^{\qquad\prime}
\sum_{\substack{(j_0,\ldots,j_L)\neq (0,\ldots,0)\\j_{\ell}\in\{0,1\}}}
\prod_{\ell=1}^L
G_{p_{\ell}}^{1-j_{\ell}}(\text{main},x)
\prod_{\ell=1}^L
G_{f,p_{\ell}}^{j_{\ell}}(\text{error},x)
\bigg|
\nonumber\\
=&
\bigg|
E(q^m)
\prod_{ p \in \mathcal{P}_q(\log q^m)}
G_p(\text{main},x)
\nonumber\\
&
+
\sum_{\substack{(j_0,\ldots,j_L)\neq (0,\ldots,0)\\j_{\ell}\in\{0,1\}}}
\prod_{\ell=1}^L
G_{p_{\ell}}^{1-j_{\ell}}(\text{main},x)
\bigg(
\sum_{f\in S_k(q^m)}^{\qquad\prime}
\prod_{\ell=1}^L
G_{f,p_{\ell}}^{j_{\ell}}(\text{error},x)
\bigg)
\bigg|
\nonumber\\
\leq &
\bigg|
E(q^m)
\prod_{ p \in \mathcal{P}_q(\log q^m)}
G_p(\text{main},x)
\bigg|
\nonumber\\
&
+
\bigg|
\sum_{\substack{(j_0,\ldots,j_L)\neq (0,\ldots,0)\\j_{\ell}\in\{0,1\}}}
\prod_{\ell=1}^L
G_{p_{\ell}}^{1-j_{\ell}}(\text{main},x)
\bigg(
\sum_{f\in S_k(q^m)}^{\qquad\prime}
\prod_{\ell=1}^L
G_{f,p_{\ell}}^{j_{\ell}}(\text{error},x)
\bigg)
\bigg|
\nonumber\\
=: &
\mathcal{Y}''_{\mathcal{P}_q(\log q^m)}
+
\mathcal{Y}'_{\mathcal{P}_q(\log q^m)},
\end{align}
say.
Here, the definition of 
$\mathcal{Y}''_{\mathcal{P}_q(\log q^m)}$ and $\mathcal{Y}'_{\mathcal{P}_q(\log q^m)}$
are different from those in \cite{mu},
but we will prove that they tend to $0$ by the same argument as in \cite[Section 6]{mu}.
We first consider the inner sum in the definition of
$\mathcal{Y}'_{\mathcal{P}_q(\log q^m)}$.
Let
\[
\mathsf{G}_x(n)
=
\prod_{\substack{1\leq \ell \leq L \\ j_{\ell}=1}}
(G_{p_{\ell}, x}(\nu_{p_{\ell}})-G_{p_{\ell}, x}(\nu_{p_{\ell}}+\diamond)) 
\]
only if $n$ is of the form
\[
n=\prod_{\substack{1\leq \ell \leq L \\ j_{\ell}=1}} p_{\ell}^{\nu_{p_{\ell}} r},
\quad \nu_{p_{\ell}}\geq 1,
\]
and $\mathsf{G}_x(n)=0$ for other $n$.
Let $M$ be a positive number, and
$\eta>0$ be an arbitrarily small positive number.
We see that
\begin{align}\label{Y'0}
  &
  \sum_{f\in S_k(q^m)}^{\qquad\prime}
  \prod_{\ell=1}^L
  G_{f,p_{\ell}}^{j_{\ell}}(\text{error},x)
  \nonumber\\
  =&
  \sum_{f\in S_k(q^m)}^{\qquad\prime}
  \prod_{\ell=1}^L
  \bigg(
  \sum_{\nu_{p_{\ell}}=1}^{\infty}\lambda_f(p_{\ell}^{\nu_{p_{\ell}} r})
  (G_{p_{\ell}, x}(\nu_{p_{\ell}})-G_{p_{\ell}, x}(\nu_{p_{\ell}}+\diamond))
  \bigg)^{j_{\ell}}
  \nonumber\\
  =&
  \sum_{f\in S_k(q^m)}^{\qquad\prime}
  \sum_{n>1}\lambda_f(n)\mathsf{G}_x(n)
  \nonumber\\
  =&
  \sum_{f\in S_k(q^m)}^{\qquad\prime}
  \bigg(
  \sum_{M\geq n>1}\lambda_f(n)\mathsf{G}_x(n)
  +
  \sum_{n> M}\lambda_f(n)\mathsf{G}_x(n)
  \bigg)
  \nonumber\\
\ll&
  \bigg|
\sum_{f\in S_k(q^m)}^{\qquad\prime}
\sum_{M\geq n>1}\lambda_f(n)\mathsf{G}_x(n)
\bigg|
  +
\bigg|  \sum_{f\in S_k(q^m)}^{\qquad\prime}
  \sum_{n>M}\lambda_f(n)\mathsf{G}_x(n)
\bigg|
\nonumber\\
\ll&
  \bigg|
\sum_{f\in S_k(q^m)}^{\qquad\prime}
\sum_{M\geq n>1}\lambda_f(n)\mathsf{G}_x(n)
\bigg|
  +
  \sum_{f\in S_k(q^m)}^{\qquad\prime}
  \sum_{n>M} n^{\eta}|\mathsf{G}_x(n)|
\nonumber\\
\ll&
E(q^m) \sum_{M\geq n>1} n^{(k-1)/2} |\mathsf{G}_x(n)|
  +
\sum_{n>M} n^{\eta}|\mathsf{G}_x(n)|,
\end{align}
where on the last inequality we used \eqref{P}.
Therefore
\begin{align}\label{Y-0-bis}
&\mathcal{Y}'_{\mathcal{P}_q(\log q^m)}\ll
\sum_{\substack{(j_0,\ldots,j_L)\neq (0,\ldots,0)\\j_{\ell}\in\{0,1\}}}
\prod_{\ell=1}^L
G_{p_{\ell}}^{1-j_{\ell}}(\text{main},x)\notag\\
&\times\bigg(E(q^m) \sum_{M\geq n>1} n^{(k-1)/2} |\mathsf{G}_x(n)|
  +
\sum_{n>M} n^{\eta}|\mathsf{G}_x(n)|\bigg).
\end{align}
This corresponds to \cite[(6.13)]{mu}.

By \cite[(6.14) and (6.15)]{mu},
we have the estimates of $G_{p,x}(\nu_p)$ as
\[
|G_{p, x}(\nu_p)|
\leq
\begin{cases}
  \displaystyle \bigg(\exp\bigg(\frac{|x|}{p^{\sigma}-1}\bigg)\bigg)^2 & \nu_p=0\\
  \displaystyle \frac{1}{p^{\nu_p \sigma/2}}\bigg(\exp\bigg(\frac{|x|}{p^{\sigma}-1}\bigg)\bigg)^2\bigg(\exp\bigg(\frac{|x|}{p^{\sigma/2}-1}\bigg)\bigg)
  & \nu_p \neq 0.
  \end{cases}
\]
These estimates yield that for $\sigma>1/2$ and $|x|<R$, there exists a large
$p_0=p_0(\sigma,R)$ for which 
\begin{align*}
  &
  |G(\text{main}, x)|
  =
  |G_{p, x}(0)-G_{p, x}(\diamond)|
  \nonumber\\
  \leq&
  \bigg(\exp\bigg(\frac{|x|}{p^{\sigma}-1}\bigg)\bigg)^2\bigg(1+
  \frac{1}{p^{\nu_p\sigma/2}}\exp\bigg(\frac{|x|}{p^{\sigma/2}-1}\bigg)\bigg)
  \leq
  2
\end{align*}
holds for any $p>p_0$.   Therefore
\begin{align}\label{Y''}
\mathcal{Y}''_{\mathcal{P}_q(\log q^m)}
=
|E(q^m)|
\prod_{\ell=1}^L
|G_{p_{\ell}}(\text{main},x)|
\ll_{\sigma,R} 
|E(q^m)| 2^L \to 0
\end{align}
as $q^m\to \infty$ by the same argument as in \cite[p.~2077]{mu}.
Further, 
since $\nu_p\geq 1$ we have
\begin{align}\label{Y'1}
|\mathsf{G}_x(n)|
\leq &
\prod_{\substack{1\leq \ell \leq L\\j_{\ell}=1}}
(2\max\{|G_{p_{\ell}, x}(\nu_p)|,\; |G_{p_{\ell}, x}(\nu_p+\diamond)|\})
\nonumber\\
\leq&
\frac{2^L}{n^{\sigma r/2}}
\prod_{\substack{1\leq \ell \leq L\\j_{\ell}=1}}
\bigg(\exp\bigg(\frac{|x|}{p^{\sigma}-1}\bigg)\bigg)^2\bigg(\exp\bigg(\frac{|x|}{p^{\sigma/2}-1}\bigg)\bigg).
\end{align}
From the estimates \eqref{Y-0-bis} and \eqref{Y'1},
we see that
$\mathcal{Y}'_{\log q^m}\to 0$ as $q^m\to\infty$
by the same argument as in \cite[pp.~2075--2077]{mu}.
Therefore we conclude that
\begin{equation}\label{lim-Y}
 \lim_{q^m\to\infty} \mathcal{Y}_{\log q^m}\to 0.
\end{equation}
\par
Finally we see that Lemma~\ref{keylemma} is established,
by substituting \eqref{lim-Z}, \eqref{lim-X} and \eqref{lim-Y}
into \eqref{basic}.
\par
%
%
\section{Proof of Corollary~\ref{main2} for $r\geq 3$.}\label{sec6}
\par
In the previous sections,
we proved the formula of Corollary~\ref{main2} for $r=1$ and $r=2$.
We suppose Corollary~\ref{main2} is valid for all $r < \mathfrak{r}$.
From \cite[Theorem~1.5]{mu} we know that
there exists $\mathcal{M}_{\sigma}^*$ such that
\[
  \int_{\mathbb{R}}\mathcal{M}_{\sigma}^*(u)\Psi(u) \frac{du}{\sqrt{2\pi}}
  =
  \mathrm{Avg}\Psi(\log L_{\mathbb{P}_q}(\mathrm{Sym}_f^{\mathfrak{r}}, \sigma)
  -
  \log L_{\mathbb{P}_q}(\mathrm{Sym}_f^{\mathfrak{r}-2}, \sigma)).
  \]
Now assume $\Psi(x+y)=\Psi(x) + \Psi(y)$.
Then
\begin{align}\label{cor-pr-1}
  &
  \int_{\mathbb{R}}\mathcal{M}_{\sigma}^*(u)\Psi(u) \frac{du}{\sqrt{2\pi}}
  \notag\\
  =&
  \mathrm{Avg}\Psi(\log L_{\mathbb{P}_q}(\mathrm{Sym}_f^{\mathfrak{r}}, \sigma))
  -
  \mathrm{Avg}\Psi(\log L_{\mathbb{P}_q}(\mathrm{Sym}_f^{\mathfrak{r}-2}, \sigma)),
\end{align}
and by induction assumption
\begin{align}\label{cor-pr-2}
\mathrm{Avg}_{\star}\Psi
(
\log L_{\mathbb{P}_q}(\mathrm{Sym}_f^{\mathfrak{r}-2}, \sigma)
)
=&
\int_{\mathbb{R}}
\mathcal{M}_{\sigma}(\mathrm{Sym}^0, u)
\Psi(u) du
\notag\\
&+
\rho'
\int_{\mathbb{R}}
\mathcal{M}_{\sigma}^*(u)
\Psi(u) du,
\end{align}
where $\rho'=(\mathfrak{r}-3)/2$ if $\mathfrak{r}-2$ is odd and
$\rho'=(\mathfrak{r}-2)/2 -1$ if $\mathfrak{r}-2$ is even.
Since
\[
\rho'+1
=
\begin{cases}
  (\mathfrak{r}-1)/2 & \mathfrak{r}-2 \;\text{is odd}\\
  \mathfrak{r}/2 -1 & \mathfrak{r}-2 \;\text{is even}
\end{cases}
=
\rho,
\]
from \eqref{cor-pr-1} and \eqref{cor-pr-2} we obtain
\[
\mathrm{Avg}_{\star}
\Psi(\log L_{\mathbb{P}_q}(\mathrm{sym}_f^{\mathfrak{r}}, \sigma)
=
\int_{\mathbb{R}}
\mathcal{M}_{\sigma}(\mathrm{Sym}^0, u)
\Psi(u) du
+\rho
\bigg(
\int_{\mathbb{R}}
  \mathcal{M}_{\sigma}^*(u)
  \Psi(u) du
  \bigg).
\]
If $\Psi$ is continuous and satisfies $\Psi(x+y)=\Psi(x) + \Psi(y)$, then
$\Psi(x)=cx$ with a certain constant $c$.   Therefore we obtain Corollary~\ref{main2}.
%
%

%
\noindent
Philippe Lebacque:\\
Laboratories de Math\'{e}matiques de Besan\c{c}on,\\
UFR Sciences dt techniques 16, route de Gray 25 030 Besan\c{c}on, France.\\
philippe.lebacque@univ-fcomte.fr
\bigskip
\\
\noindent
Kohji Matsumoto:\\
Graduate School of Mathematics,
Nagoya University, Furocho, Chikusa-ku, Nagoya 464-8602, Japan.\\
kohjimat@math.nagoya-u.ac.jp
\bigskip
\\
\noindent
Yumiko Umegaki:\\
Department of Mathematical and Physical Sciences,
Nara Women's University,
Kitauoya Nishimachi, Nara 630-8506, Japan.\\
ichihara@cc.nara-wu.ac.jp
\end{document}